\newtheorem{theorem}{Theorem}
\newtheorem{corollary}{Corollary}
\newtheorem{lemma}{Lemma}
\newtheorem{proposition}{Proposition}
\theoremstyle{definition}
\newtheorem{defn}{Definition}
\newtheorem{example}{Example}
\newtheorem{assumption}{Assumption}
\theoremstyle{remark}
\newtheorem{remark}{Remark}
\newcommand{\FF}{\mathbb{F}}
\newcommand{\YY}{\mathbb{Y}}
\newcommand{\Fconv}{\left\langle\mathbb{F}_0\right\rangle}
\newcommand{\Y}{\mathscr{Y}}
\newcommand{\E}{\mathsf{E}}
\newcommand{\prob}{\mathsf{P}}
\newcommand{\unif}{{\sf Unif}}
\newcommand{\eps}{\varepsilon}
\renewcommand{\phi}{\varphi}
\newcommand{\ftrue}{f_0}
\newcommand{\fbest}{f^\star}
\newcommand{\fbestn}{f^{\star(n)}}
\newcommand{\fhat}{\hat f}
\newcommand{\fn}{f^{(n)}}
\newcommand{\fhatn}{\hat f^{(n)}}
\newcommand{\ftruen}{f_0^{(n)}}
\newcommand{\Kstar}{K^\star}
\newcommand{\hstar}{h^\star}
\begin{document}

\begin{frontmatter}

\title{On Posterior Concentration in Misspecified Models}
\runtitle{On Posterior Concentration in Misspecified Models}

\begin{aug}
\author[addr1]{\fnms{R.~V.} \snm{Ramamoorthi}\ead[label=e1]{ramamoor@stt.msu.edu}}
\author[addr2]{\fnms{Karthik} \snm{Sriram}\corref{}\ead[label=e2]{karthiks@iimahd.ernet.in}},
\and
\author[addr3]{\fnms{Ryan} \snm{Martin}\ead[label=e3]{rgmartin@uic.edu}}

\runauthor{R.~V. Ramamoorthi, K. Sriram, and R. Martin}

\address[addr1]{Michigan State University, East Lansing, Michigan, USA, \printead{e1}}
\address[addr2]{Indian Institute of Management Ahmedabad, India, \printead{e2}}
\address[addr3]{University of Illinois at Chicago, USA, \printead{e3}}


\end{aug}

%
\begin{abstract}
We investigate the asymptotic behavior of Bayesian posterior
distributions under independent and identically distributed ($i.i.d.$)
misspecified models. More specifically, we study the concentration of
the posterior distribution on neighborhoods of $\fbest$, the density
that is closest in the Kullback--Leibler sense to the true model $f_0$.
We note, through examples, the need for assumptions beyond the usual
Kullback--Leibler support assumption. We then investigate consistency
with respect to a general metric under three assumptions, each based on
a notion of divergence measure, and then apply these to a weighted
$L_1$-metric in convex models and non-convex models.

Although a few results on this topic are available, we believe that
these are somewhat inaccessible due, in part, to the technicalities and
the subtle differences compared to the more familiar well-specified
model case. One of our goals is to make some of the available
results, especially that of \citet{Kleijn_van2006}, more accessible.
Unlike their paper, our approach does not require construction of test
sequences. We also discuss a preliminary extension of the $i.i.d.$
results to the independent but not identically distributed ($i.n.i.d.$)
case.
\end{abstract}


\setattribute{keyword}{MSC}{MSC 2010:}
\begin{keyword}[class=MSC]
\kwd[Primary ]{62C10}
\kwd[; secondary ]{62C10}
\end{keyword}

\begin{keyword}
\kwd{Bayesian}
\kwd{consistency}
\kwd{misspecified}
\kwd{Kullback--Leibler}
\end{keyword}

\end{frontmatter}


\section{Introduction}
Let $\FF_0$ be a family of densities with respect to a $\sigma
$-finite measure on a measure space. The object of study is the
posterior distribution arising out of the model which consists of a
prior distribution $\Pi$ on $\FF_0$ and for any given $f\in\FF_0$,
$Y_{1:n} = (Y_1, Y_2, \ldots, Y_n)$ are independent and identically
distributed ($i.i.d.$) as $f$. We investigate the behavior of the
posterior distribution when the ``true'' model $f_0$ is not necessarily
in $\FF_0$. The posterior is typically expected to concentrate around
a density $\fbest$ in $\FF_0$ that minimizes the Kullback--Leibler
divergence from $\ftrue$.

An early investigation of this set up goes back to \cite{Berk1966}. An
extensive study of parametric model appears in \cite
{Bunke_milhaud1998}. \cite{Lee} investigate concentration of the
posterior and its behavior in testing problems when the prior is on an
exponential model. The infinite-dimensional nonparametric case has been
studied by \citet{Kleijn_van2006} and \citet{DW2013} for the $i.i.d.$
case, and \citet{shalizi2009} for the non-$i.i.d.$ case.

For the nonparametric $i.i.d.$ case studied in this note, \cite
{Kleijn_van2006} is the basic paper. The standard approach to this
problem is to first identify sets whose posterior probability goes to 0
and then relate these to the topology of interest. In their paper,
\citeauthor*{Kleijn_van2006} develop both these aspects together and,
in addition to consistency, also develop rates. \cite{DW2013} take a
somewhat different route towards providing sufficient conditions,
specifically for Hellinger-consistency.

Our starting point is \cite{Kleijn_van2006}. To help motivate our
approach, we first summarize key steps in their work. Let $\E_0$
denote expectation with respect to~$\ftrue$.\looseness=1
\begin{itemize}
\item They start with the Kullback--Leibler support assumption on the
prior $\Pi$, i.e., \\
%
\begin{equation} \label{kl.assn}
\Pi\left( f: \E_0\log\frac{\fbest}{f} < \eps\right) >0 , \mbox
{ for any } \eps>0.
\end{equation}
\item They cover the sets $S_j=\{f:\ j\eps\leq d(f,\fbest)<(j+1)\eps
\}$, for $j\geq1$, by convex sets $A$ satisfying:
%
\begin{equation}\label{eqn0}
\sup_{f \in A} \inf_{0 \leq\alpha\leq1} \E_0 \left(\frac
{f}{\fbest}\right)^\alpha< e^{-j^2\eps^2/4}.
\end{equation}
\item Then they show that, if the assumption in (\ref{kl.assn}) is
satisfied, posterior probability of sets satisfying (\ref{eqn0}) goes
to $0$ by constructing a sequence of exponential tests for a testing
problem that involves non-probability measures. Then, based on the
number of sets satisfying (\ref{eqn0}) required to cover $S_j$, they
develop a notion of entropy for testing problems of a set $S_j$. When
such entropy can be controlled suitably, it is shown that posterior
probability of $\{d(f,\fbest)\geq\eps\}=\cup_{j\geq1}S_j$ goes to 0.
\end{itemize}
In this paper, we first provide a simple proof to show that, if the
assumption in (\ref{kl.assn}) is satisfied, then probability of sets
satisfying (\ref{eqn0}) goes to 0. Our proof does not involve testing
problems. We further observe that for a given convex set, the condition
in (\ref{eqn0}) is, in fact, equivalent to a simpler condition
based on Kullback--Leibler divergence.

Consistency is related to the topology on the space of densities,
usually the weak topology or the Hellinger-metric topology. Towards
this, we give two examples in the appendix that point out the need for
additional assumptions beyond requiring that $\fbest$ be in the
topological support or Kullback--Leibler support of $\Pi$. In order to
gain insight, we first study consistency with respect to a general
metric under a set of three assumptions, each based on a notion of
divergence. The first assumption is based on Kullback--Leibler
divergence, the second is based on (\ref{eqn0}), and the third is
based on a relatively simpler notion. We show that for a weighted
$L_1$-metric, such assumptions hold in convex models or when the
specified family is compact. The first assumption mainly works for
compact and convex families. The second assumption along with an
appropriate metric entropy condition gives consistency for convex
families. As a consequence, we derive a consistency result (Theorem
\ref{T:analogKV}), which is analogous to \cite{Kleijn_van2006}. The
third assumption is useful for non-convex (e.g., parametric) models. In
this case, we circumvent\vadjust{\eject} the convexity requirement by making a
continuity assumption on the likelihood ratio and show posterior
consistency under an appropriate prior-summability or metric-entropy
condition. As a particular consequence, Theorem~\ref{DWKVequi} gives
Hellinger consistency analogous to \cite{DW2013}.

We believe that our methods are simple and transparent, and provide
useful insights on the requirements for the metric, while also making
some of the results in \cite{Kleijn_van2006} more accessible. As
another small difference, we note that our consistency results are
presented in the \xch{`}{'}{\it almost sure}' sense, as compared to convergence
of means. We also look at one immediate extension of the $i.i.d.$
results to independent but not identically distributed ($i.n.i.d.$)
models. We note that our study in the $i.n.i.d.$ case is preliminary
and is presented as an initial approach.

The remainder of the paper is organized as follows. Section~\ref
{S:setup} sets our notation and provides some basic results.
Section~\ref{S:general} presents the consistency results for a general
metric. Section~\ref{S:L1weak} presents some important results
specific to $L_1$ and weak topologies. Section~\ref{S:examples}
contains examples to demonstrate the application of our results.
Finally, Section~\ref{S:inid} discusses an extension of the $i.i.d.$
results to the $i.n.i.d.$ case. In the interest of flow, supporting
results and details of some proofs are included in the appendix.

\section{Notations and preliminary results}
\label{S:setup}
\subsection{Notations}
\label{S:notation}
Let $Y_{1:n}=(Y_{1}, Y_{2}, \dots, Y_{n} )$ be an
$i.i.d.$ sample from an unknown ``true\xch{''}{"} density $f_{0}$ with respect to
a $\sigma$-finite measure $\mu$ on a measure space $(\YY, \Y)$.
$\FF_0$ is a family of density functions specified to model $Y_{1:n}$.
$\ftrue$ is not necessarily in $\FF_0$. Let $\Pi$ be a prior on $\FF
_0$. We let $\prob_0$ and $\E_0$ denote probability and expectation
with respect to $\ftrue$. When talking about joint distribution of
finite or infinite $i.i.d.$ sequences with respect to $\prob_0$, we
will omit the superscript in $\prob_0^n$ or $\prob_0^\infty$.

It is well known that the posterior typically concentrates around a
density that minimizes the Kullback--Leibler divergence from $\ftrue$,
given by
\begin{equation*}
\label{KLdiv0}
K(\ftrue, f):= \E_0 \log\frac{\ftrue}{f}=\int\log\frac{\ftrue
}{f} \ftrue d\mu.
\end{equation*}
Accordingly, we assume that there is a fixed unique $\fbest\in\FF_0$
such that
\begin{equation*}
\label{KLdiv}
K(\ftrue, \fbest) = \inf_{f\in\FF_0} K(\ftrue,f).
\end{equation*}
For any density $f$, we define
\begin{equation*}
\label{KLdiff}
\Kstar(\ftrue,f):= K(\ftrue,f)- K(\ftrue,\fbest).
\end{equation*}
We assume throughout that $\int(f / \fbest) \ftrue\,d\mu< \infty$
for all $f \in\FF_0$, and also $ \int(\ftrue/ \fbest) \,d\mu<
\infty$. The latter condition is useful since we will later (Section
\ref{S:L1weak}) consider weak and $L_1$ topologies with respect to the
measure $\mu_0$, where $d\mu_0 = (\ftrue/ \fbest) \,d\mu$. The
weighted $L_1$ metric, $L_1(\mu_0)$, appears to be a natural choice in
misspecified models, as opposed to the usual $L_1(\mu)$ metric.

Let $\Fconv$ be the smallest convex set containing $\FF_0$. In this
note, a convex set is one that is closed under mixtures. That is, a
general subset $A \subseteq\Fconv$ is called convex if, for any
probability measure $\nu$ on $A$, the mixture $\fhat_\nu:= \int_A f
\,\nu(df)$ belongs to $A$. It is convenient to extend $\Pi$ to
$\Fconv$ by defining $ \Pi(A) := \Pi(A \cap\FF_0)$, for any
measurable subset $A$ of $\Fconv$. Note that we do not assume that
$\fbest$ minimizes the Kullback--Leibler divergence in $\Fconv$. In
addition, we define:
\begin{eqnarray}
\hstar_{\alpha}(\ftrue,f) &:=& \E_0\left(f/\fbest\right)^\alpha
= \int\left(f/\fbest\right)^\alpha\ftrue d\mu, \nonumber\\
\fn&:=&\fn(y_1, y_2,\dots, y_n):= \prod_{i=1}^{n}f(y_{i}),\nonumber
\\
\fhatn_{\nu}&:=&\int\fn d\nu(f), \mbox{ where } \nu\mbox{ is a
probability measure on } \Fconv,\nonumber\\
\fhatn_{A}&:=&\fhatn_{\Pi_{A}} , \mbox{ where } \Pi_{A}(\cdot
):=\Pi(A \ \cap\ \cdot\ )/\Pi(A), \nonumber\\
\hstar_{\alpha}(\ftruen,\fn)&:=& \E_0\left(\fn/\fbestn\right
)^{\alpha}=\int\left(\fn/\fbestn\right)^\alpha\ftruen d\mu.
\nonumber
\end{eqnarray}
Finally, we write down the formula for the posterior distribution as
%
\begin{equation}
\label{posterior}
\Pi(A| Y_{1:n}) = \Pi(A) \, \frac{\fhatn_A(Y_1,\ldots,Y_n) /
\fbestn(Y_1,\ldots,Y_n)}{\fhatn_\Pi(Y_1,\ldots,Y_n) / \fbestn
(Y_1,\ldots,Y_n)}.
\end{equation}

\subsection{Preliminary results}
\label{S:prelim}
We start with
\begin{assumption}
\label{as:prior.concentration}
$\forall\ \eps>0$, $\Pi\left(f:\ \Kstar(\ftrue, f) <\eps\right)>0$.
\end{assumption}
The following proposition, which helps handle the denominator of (\ref
{posterior}), is the main consequence of Assumption 1.
\begin{proposition}\label{post.denom}
If Assumption~\ref{as:prior.concentration} holds, then, for any $\beta
> 0$,
\begin{equation*}
\liminf_{n \rightarrow\infty} e^{n \beta}\cdot\fhatn_\Pi
(Y_1,\ldots,Y_n) / \fbestn(Y_1,\ldots,Y_n) = \infty\qquad \prob
_0\mbox{-a.s\xch{.}{..}}
\end{equation*}
\end{proposition}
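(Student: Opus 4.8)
The plan is to bound the marginal density ratio from below by restricting the prior integral to a Kullback--Leibler neighborhood supplied by Assumption~\ref{as:prior.concentration}, and then to control the resulting average of i.i.d.\ log-likelihood-ratio terms by the strong law of large numbers.

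First I would fix $\beta > 0$ and choose $\eps$ with $0 < \eps < \beta$. Set $B := \{f : \Kstar(\ftrue, f) < \eps\}$, which satisfies $\Pi(B) > 0$ by Assumption~\ref{as:prior.concentration}. Since the integrand is nonnegative, I restrict the defining integral $\fhatn_\Pi/\fbestn = \int \prod_{i=1}^n \{f(Y_i)/\fbest(Y_i)\}\, d\Pi(f)$ to $B$ and factor out $\Pi(B)$, writing the restricted integral against the normalized measure $\Pi_B$. Taking logarithms and applying Jensen's inequality (the logarithm is concave and $\Pi_B$ is a probability measure) moves the logarithm inside the integral, yielding
\[
\log\bigl(\fhatn_\Pi/\fbestn\bigr) \ge \log \Pi(B) + \sum_{i=1}^n g_i, \qquad g_i := \int_B \log\frac{f(Y_i)}{\fbest(Y_i)}\, d\Pi_B(f),
\]
where the $g_i$ are i.i.d.\ functions of the $Y_i$.

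Next I would apply the strong law of large numbers to $n^{-1}\sum_{i=1}^n g_i$. A short computation using $\E_0 \log\{f(Y)/\fbest(Y)\} = K(\ftrue,\fbest) - K(\ftrue,f) = -\Kstar(\ftrue,f)$ together with Fubini gives $\E_0 g_1 = -\int_B \Kstar(\ftrue,f)\, d\Pi_B(f)$, and since $\Kstar < \eps$ throughout $B$ this mean strictly exceeds $-\eps$. Dividing the displayed inequality by $n$, noting $n^{-1}\log\Pi(B)\to 0$, and letting $n\to\infty$ then gives $\liminf_n n^{-1}\log(\fhatn_\Pi/\fbestn) \ge \E_0 g_1 > -\eps$ $\prob_0$-a.s. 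Consequently $\liminf_n n^{-1}\log(e^{n\beta}\fhatn_\Pi/\fbestn) \ge \beta - \eps > 0$, and a strictly positive exponential growth rate forces $e^{n\beta}\fhatn_\Pi/\fbestn \to \infty$, which is the claim.

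The main obstacle is verifying that $g_1 \in L^1(\prob_0)$, so that the strong law (and the Fubini interchange used to evaluate $\E_0 g_1$) is justified rather than merely a one-sided estimate. For the positive part, Jensen and $\log t \le t$ bound $g_1^+$ by $\int_B \{f(Y)/\fbest(Y)\}\, d\Pi_B(f)$, whose $\prob_0$-expectation is finite by the standing assumption $\int (f/\fbest)\ftrue\, d\mu < \infty$. For the negative part one writes $\E_0[(\log(f/\fbest))^-] = \E_0[(\log(f/\fbest))^+] + \Kstar(\ftrue,f)$ and integrates over $B$, where the bound $\Kstar < \eps$ keeps the resulting double integral finite. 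This integrability check is precisely where the standing finiteness assumptions enter the argument.
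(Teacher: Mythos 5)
Your overall strategy---restrict the prior integral to the Kullback--Leibler neighborhood $B=\{f:\Kstar(\ftrue,f)<\eps\}$ with $\eps<\beta$, pull the logarithm inside by Jensen, and apply the SLLN to the prior-averaged variables $g_i=\int_B \log\{f(Y_i)/\fbest(Y_i)\}\,d\Pi_B(f)$---is a legitimate variant of the argument the paper invokes, but it is not the same one. The paper's proof is ``along the lines of Lemma 4.4.1'' of \cite{Ghosh_RVR2003}: there one fixes $f\in B$, applies the SLLN \emph{pointwise in $f$} to get $n^{-1}\sum_i\log\{f(Y_i)/\fbest(Y_i)\}\to-\Kstar(\ftrue,f)>-\eps$ $\prob_0$-a.s., interchanges the null sets by Fubini, and then applies Fatou's lemma to $e^{n\beta}\int_B\prod_i\{f(Y_i)/\fbest(Y_i)\}\,d\Pi(f)$, whose integrand tends to $\infty$ for $\Pi_B$-a.e.\ $f$. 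That route needs only the pointwise bound $\E_0\bigl(\log(f/\fbest)\bigr)^+\leq\E_0(f/\fbest)<\infty$ for each fixed $f$, which the standing assumptions supply; no integrability of any prior average is required.

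This matters because the integrability check in your version---which you correctly identify as the main obstacle---does not actually go through as written. You bound $\E_0 g_1^+\leq\int_B\E_0\{f(Y)/\fbest(Y)\}\,d\Pi_B(f)$ and declare this finite ``by the standing assumption $\int(f/\fbest)\ftrue\,d\mu<\infty$.'' But that assumption is pointwise in $f$; finiteness of $\E_0(f/\fbest)$ for every $f$ does not make its $\Pi_B$-integral finite (take $\Pi_B$ assigning mass $2^{-k}$ to densities $f_k$ with $\E_0(f_k/\fbest)=4^k$). Your negative-part estimate reduces, via $\E_0[(\log(f/\fbest))^-]=\E_0[(\log(f/\fbest))^+]+\Kstar(\ftrue,f)$, to the same unbounded quantity $\int_B\E_0[(\log(f/\fbest))^+]\,d\Pi_B$. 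When both part-integrals are infinite, $\E_0 g_1$ is not even well defined and the SLLN cannot be applied to the $g_i$, so the step fails rather than being merely incomplete. The repair is easy: replace $B$ by $B_m=B\cap\{f:\E_0(f/\fbest)\leq m\}$; since the $B_m$ increase to $B$ and $\Pi(B)>0$, some $B_m$ has positive prior mass (granting measurability of $f\mapsto\E_0(f/\fbest)$), and on $B_m$ your Jensen--SLLN argument runs verbatim with $\E_0 g_1^+\leq m$. Alternatively, switch to the Fatou--Fubini route above, which sidesteps the prior-level integrability issue entirely.
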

The proof of the proposition is along the lines of Lemma 4.4.1 in \cite
{Ghosh_RVR2003}.

In view of Proposition~\ref{post.denom}, $\Pi(A|Y_{1:n}) \rightarrow
0 \mbox{ }\ \prob_0$-a.s., if it can be ensured that
for some $\beta_0 >0$,
%
\begin{equation}\label{equn2}
e^{n \beta_0} \cdot\Pi(A)\cdot\frac{\fhatn_{A}(Y_{1:n})}{\fbestn
(Y_{1:n})} \rightarrow0 \qquad\prob_0\mbox{-a.s\xch{\ldots}{..} }
\end{equation}
Towards handling (\ref{equn2}), we work with three notions of
divergence of $f$ from $\fbest$:
\begin{enumerate}
\item[] (i) $ \Kstar(\ftrue, f),$ based on Kullback--Leibler divergence,
\item[] (ii) $ (1- \inf_{0\leq\alpha\leq1}\hstar_{\alpha
}(\ftrue,f))$, based on \cite{Kleijn_van2006} and
\item[] (iii) $(1- \hstar_{\alpha_0}(\ftrue,f))$ for
some $0 < \alpha_0 <1$, a notion relatively simpler than (ii).
\end{enumerate}
The proposition below describes the relationship between these. The
second condition in the proposition was introduced by
\xch{\citet*{Kleijn_van2006}}{\citeauthor{Kleijn_van2006}}.
\begin{proposition}\label{KVKLrelation}
Consider the following three conditions for a subset $A$:
\begin{enumerate}
\item[(i)] For some $\epsilon>0$, $ \inf_{f \in A} \Kstar(\ftrue,
f) > \epsilon$.
\item[(ii)] For some $\delta>0$, $ \sup_{f \in A} \inf_{0\leq
\alpha\leq1} \hstar_\alpha(\ftrue, f) < e^{-\delta} $.
\item[(iii)] For some $0<\alpha_0<1$ and $\eta>0$, $\sup_{f \in A}
\hstar_{\alpha_0}(\ftrue,f) < e^{-\eta} $.
\end{enumerate}
For any set $A$, $ (iii) \Rightarrow(ii) \Rightarrow(i)$. Further, if
the set $A$ is convex, then they are all equivalent.
\end{proposition}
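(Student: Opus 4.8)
The plan is to treat the two implications that hold for arbitrary $A$ first, and then deploy a minimax argument for the convexity-dependent equivalences. For $(iii)\Rightarrow(ii)$ nothing is needed beyond the definitions: fixing the $\alpha_0$ supplied by $(iii)$, one has $\inf_{0\leq\alpha\leq1}\hstar_\alpha(\ftrue,f)\leq\hstar_{\alpha_0}(\ftrue,f)$ for every $f$, so $\sup_{f\in A}\inf_\alpha\hstar_\alpha(\ftrue,f)\leq\sup_{f\in A}\hstar_{\alpha_0}(\ftrue,f)<e^{-\eta}$, giving $(ii)$ with $\delta=\eta$. For $(ii)\Rightarrow(i)$ the key is the pointwise Jensen bound. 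Writing $\hstar_\alpha(\ftrue,f)=\E_0\,e^{\alpha\log(f/\fbest)}$ and applying Jensen to the convex function $\exp$ yields $\hstar_\alpha(\ftrue,f)\geq e^{\alpha\E_0\log(f/\fbest)}=e^{-\alpha\Kstar(\ftrue,f)}$, where I use that $\E_0\log(f/\fbest)=K(\ftrue,\fbest)-K(\ftrue,f)=-\Kstar(\ftrue,f)$. Taking the infimum over $\alpha\in[0,1]$ gives $\inf_\alpha\hstar_\alpha(\ftrue,f)\geq e^{-(\Kstar(\ftrue,f))_+}$; combined with $(ii)$ this forces $(\Kstar(\ftrue,f))_+>\delta$, hence $\Kstar(\ftrue,f)>\delta$, for every $f\in A$, so $\inf_{f\in A}\Kstar(\ftrue,f)\geq\delta$ and $(i)$ holds with any $\eps\in(0,\delta)$. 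Both of these steps are routine and require no convexity.

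For the convex case I would organize everything around the two-variable map $\psi(\alpha,f):=\hstar_\alpha(\ftrue,f)$. The first observation is that $\psi$ is convex in $\alpha$ for each fixed $f$ (again because $\alpha\mapsto\E_0 e^{\alpha\log(f/\fbest)}$ is a convex moment-generating-type function) and concave in $f$ for each fixed $\alpha\in(0,1)$ (since $t\mapsto t^\alpha$ is concave and $f\mapsto f/\fbest$ is linear, so concavity is preserved along mixtures, which is exactly the notion of convexity of $A$ in force here). Continuity of $\alpha\mapsto\psi(\alpha,f)$ on the compact interval $[0,1]$ follows from dominated convergence, the dominating function being integrable by the standing assumption $\int(f/\fbest)\,\ftrue\,d\mu<\infty$. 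These are precisely the hypotheses of Sion's minimax theorem, which I would invoke to obtain $\inf_{0\leq\alpha\leq1}\sup_{f\in A}\psi(\alpha,f)=\sup_{f\in A}\inf_{0\leq\alpha\leq1}\psi(\alpha,f)$. Writing $G(\alpha):=\sup_{f\in A}\psi(\alpha,f)$, this common value equals $\inf_\alpha G(\alpha)$, and $G$ is convex with $G(0)=1$. Granting $\inf_\alpha G(\alpha)<1$, the equivalence of $(ii)$ and $(iii)$ is immediate: $(ii)$ makes the right side below $e^{-\delta}$, so some $\alpha_1>0$ has $G(\alpha_1)<e^{-\delta}$ (the value $1$ at $\alpha=0$ rules out $\alpha_1=0$), and convexity of $G$ lets me pull $\alpha_1$ strictly inside $(0,1)$ while keeping $G(\alpha_0)<1$, whence $(iii)$ holds with $\eta=-\log G(\alpha_0)$; the converse is the already-proved $(iii)\Rightarrow(ii)$.

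The remaining and genuinely hard step is closing the loop, i.e. showing $(i)\Rightarrow(ii)$ for convex $A$, which by the minimax identity reduces to proving the strict inequality $\inf_\alpha G(\alpha)<1$, equivalently that the right derivative $G'_+(0)$ is strictly negative. The natural driver is the one-sided derivative of each slice: $\frac{d}{d\alpha}\psi(\alpha,f)\big|_{\alpha=0}=\E_0\log(f/\fbest)=-\Kstar(\ftrue,f)\leq-\eps$ under $(i)$. The obstacle is that the right derivative of a supremum need not equal the supremum of the slice-derivatives; the inequality $G'_+(0)\geq\sup_f\big(-\Kstar(\ftrue,f)\big)$ comes for free, but it is the wrong direction, and a convex slice with slope $-\eps$ at $0$ can curve back up almost immediately, so that its minimum sits arbitrarily close to $1$. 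What rules this out uniformly is the interplay of convexity of $A$ with the integrability assumptions $\int(f/\fbest)\ftrue\,d\mu<\infty$ and $\int(\ftrue/\fbest)\,d\mu<\infty$, which I would use to dominate the difference quotients $\alpha^{-1}\big(\psi(\alpha,f)-1\big)$ and justify that $G$ actually decreases near $0$. I expect this uniform control of $G'_+(0)$ — the legitimacy of interchanging $\sup_{f\in A}$ with $\tfrac{d}{d\alpha}$ at $\alpha=0$ — to be the single delicate point of the whole proposition, and the place where convexity of $A$ is indispensable; the other implications are, by comparison, bookkeeping.
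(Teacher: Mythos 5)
Your treatment of the unconditional implications is sound, and your proof of (ii) $\Rightarrow$ (i) via Jensen's inequality, $\hstar_\alpha(\ftrue,f) = \E_0\, e^{\alpha\log(f/\fbest)} \geq e^{-\alpha \Kstar(\ftrue,f)}$, is actually more direct than the paper's argument, which instead uses H\"older's inequality to propagate the bound $\hstar_{\alpha'}(\ftrue,f)\leq e^{-\alpha'\delta}$ to all smaller exponents $\alpha'$ and then sends $\alpha'\downarrow 0$ through the monotone limit $(1-\hstar_{\alpha'})/\alpha' \uparrow \Kstar(\ftrue,f)$ (Lemma~\ref{KVlem6}). Likewise, your application of Sion's theorem to $\psi(\alpha,f)=\hstar_\alpha(\ftrue,f)$ reproduces the paper's Proposition~\ref{minimax_h}, and your convexity argument for pulling $\alpha_1$ into the open interval does give (ii) $\Leftrightarrow$ (iii) on convex $A$.

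The genuine gap is exactly where you flag it: (i) $\Rightarrow$ (ii)/(iii), the only implication where convexity does real work. You reduce it to $G_+'(0)<0$ for $G(\alpha)=\sup_{f\in A}\psi(\alpha,f)$ and then concede that you cannot interchange $\sup_{f\in A}$ with the derivative at $\alpha=0$; hoping that the standing integrability hypotheses will ``dominate the difference quotients'' cannot work as stated, since no such domination is uniform over $f\in A$ --- your own worry is realized by slices like $\psi(\alpha,f)\approx 1-\eps\alpha+C_f\alpha^2$ with $C_f\to\infty$, whose minima approach $1$. The paper closes the loop with a second minimax application, not to $\psi$ but to the normalized difference quotient $g(\alpha,f)=(1-\hstar_\alpha(\ftrue,f))/\alpha$, extended to $\alpha=0$ by its monotone limit $\Kstar(\ftrue,f)$ and to $\alpha=1$ by $1-\E_0(f/\fbest)$ (Proposition~\ref{minimax_g}). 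Monotonicity in $\alpha$ (Lemma~\ref{KVlem6}) makes $g$ quasi-concave in $\alpha$ and identifies $\sup_{0\leq\alpha\leq1} g(\alpha,f)=\Kstar(\ftrue,f)$; concavity of $\hstar_\alpha$ in $f$ makes $g$ convex in $f$; and lower semicontinuity of the $\alpha=0$ slice $\Kstar(\ftrue,\cdot)$ in $L_1(\mu_0)$ is obtained from the monotone approximation by the continuous slices $g(\alpha_n,\cdot)$. Sion's theorem then yields $\sup_{0\leq\alpha\leq1}\inf_{f\in A} g(\alpha,f)=\inf_{f\in A}\sup_{0\leq\alpha\leq1} g(\alpha,f)=\inf_{f\in A}\Kstar(\ftrue,f)>\eps$, and since $\inf_{f\in A} g(\alpha,f)$ increases as $\alpha\downarrow 0$, a single $\alpha_0>0$ satisfies $g(\alpha_0,f)>\delta'$ for all $f\in A$ and any fixed $\delta'<\eps$, i.e.\ $\hstar_{\alpha_0}(\ftrue,f)<1-\alpha_0\delta'\leq e^{-\alpha_0\delta'}$, which is (iii). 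In short, the interchange you correctly identify as the crux is precisely what the minimax identity performs --- but only for $g$, whose upper envelope in $\alpha$ is $\Kstar$, and not for $\psi$, whose envelope structure carries no information about condition (i).
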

The proof of Proposition~\ref{KVKLrelation} uses the minimax theorem
and is provided in Appendix~\ref{proofs}. The easy proposition below
plays a central role.
\begin{proposition}\label{prop.convexA}
Suppose $A\subset\Fconv$ is convex. If for some $0<\alpha<1$ and
$\delta>0$,
\begin{eqnarray*}
\sup_{f\in A} \hstar_{\alpha}(\ftrue,f) \leq e^{-\delta}\label{halpha},
\end{eqnarray*}
then for any probability measure $\nu$ on $A$,
\begin{eqnarray*}
\hstar_{\alpha}(\ftruen,\fhatn_\nu) \leq e^{-n\delta}.
\end{eqnarray*}
\end{proposition}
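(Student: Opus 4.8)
The plan is to prove the bound by induction on $n$, after rewriting the left-hand side in a form that exposes the product structure. Put $r_f := f/\fbest$, so that $\fhatn_\nu/\fbestn = \int_A\prod_{i=1}^n r_f(y_i)\,d\nu(f)$ and $\ftruen=\prod_{i=1}^n\ftrue(y_i)$. Writing $\Phi_k(y_{1:k}):=\int_A\prod_{i=1}^k r_f(y_i)\,d\nu(f)$ for $0\le k\le n$ (with $\Phi_0\equiv1$), the quantity to be controlled is
\[
\hstar_\alpha(\ftruen,\fhatn_\nu)=\int_{\YY^n}\Phi_n(y_{1:n})^\alpha\prod_{i=1}^n\ftrue(y_i)\,d\mu(y_i).
\]
Everything in sight is nonnegative, so Tonelli's theorem lets me integrate the coordinates one at a time, and the target is to show that this is at most $e^{-n\delta}$.

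The heart of the matter is a single peeling step that removes one coordinate at the cost of a factor $e^{-\delta}$. Fix $y_{1:k-1}$ with $\Phi_{k-1}(y_{1:k-1})>0$ and tilt $\nu$ to the probability measure $d\tilde\nu(f):=\Phi_{k-1}(y_{1:k-1})^{-1}\prod_{i=1}^{k-1}r_f(y_i)\,d\nu(f)$ on $A$. Since $r_f$ is linear in $f$, we get $\Phi_k(y_{1:k})=\Phi_{k-1}(y_{1:k-1})\,r_{\hat g}(y_k)$, where $\hat g:=\int_A f\,d\tilde\nu(f)$. This is exactly where convexity is used: because $A$ is closed under mixtures, the barycenter $\hat g$ of the probability measure $\tilde\nu$ lies in $A$, so the hypothesis $\sup_{f\in A}\hstar_\alpha(\ftrue,f)\le e^{-\delta}$ applies to it. Integrating out $y_k$ then gives
\[
\int_\YY\Phi_k(y_{1:k})^\alpha\,\ftrue(y_k)\,d\mu(y_k)=\Phi_{k-1}(y_{1:k-1})^\alpha\,\hstar_\alpha(\ftrue,\hat g)\le e^{-\delta}\,\Phi_{k-1}(y_{1:k-1})^\alpha,
\]
the bound being uniform in $y_{1:k-1}$ even though $\hat g$ depends on it; when $\Phi_{k-1}(y_{1:k-1})=0$ both sides vanish and the inequality is trivial.

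With this inequality I would finish by iteration. Integrating out $y_n,y_{n-1},\dots,y_1$ in succession, each step multiplies the running bound by at most $e^{-\delta}$ and lowers the index of $\Phi$ by one; after $n$ steps, since $\Phi_0\equiv1$, this yields $\hstar_\alpha(\ftruen,\fhatn_\nu)\le e^{-n\delta}$. Phrased as an induction, the base case $n=1$ is just the observation that $\fhat_\nu=\int_A f\,d\nu(f)\in A$ by convexity, whence $\hstar_\alpha(\ftrue,\fhat_\nu)\le e^{-\delta}$, while the peeling step reduces level $n$ to level $n-1$ for the same $\nu$.

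I expect the only delicate points to be bookkeeping rather than ideas: checking that $\tilde\nu$ is genuinely a probability measure supported on $A$ (so that $\hat g\in A$), disposing of the null set where $\Phi_{k-1}=0$, and appealing to Tonelli to justify both the iterated integration and the measurability of the partially integrated functions. The conceptual content is entirely the remark that reweighting the mixing measure keeps its barycenter inside the convex family $A$, so the uniform bound $e^{-\delta}$ is available at every coordinate; this is what prevents the mixture from degrading the estimate, whereas a crude subadditivity bound such as $(a+b)^\alpha\le a^\alpha+b^\alpha$ would only deliver $e^{-n\delta}$ up to an $n$-dependent constant.
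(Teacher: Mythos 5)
Your proposal is correct and is essentially the paper's own argument: your tilted barycenter $\hat g=\int_A f\,d\tilde\nu(f)$ is exactly the conditional (predictive) density $f_\nu(y_k\mid y_{1:k-1})$ that the paper peels off, and both proofs rest on the same observation that this conditional stays in the convex set $A$, allowing the uniform bound $e^{-\delta}$ to be applied coordinate by coordinate via induction. Your write-up merely makes explicit the bookkeeping (Tonelli, the null set where $\Phi_{k-1}=0$) that the paper leaves implicit.
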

\begin{proof}
The result follows by the use of convexity and induction. Here is an outline.
When $ n =1$, the claim holds by convexity of $A$. \\ When $ n=2$,
$f_\nu^{(2)} (y_1,y_2)$ is the marginal density of $ Y_1,Y_2$ under
the model: $ Y_1,Y_2 |f \stackrel{iid}{\sim} \ f$ and $ f \sim\nu
(\cdot)$. Write
\[
\left[\frac{f_\nu^{(2)}(y_1,y_2)}{f^{\star(2)}(y_1,y_2)}\right
]^\alpha= \left[\frac{f_{\nu}(y_1|y_2)}{\fbest(y_1)}\right
]^\alpha\left[ \frac{f_{\nu}(y_2)}{\fbest(y_2)}\right]^\alpha
\]
where the first term inside the brackets on the right-hand side,
$f_{\nu}(y_1|y_2),$ is the conditional density of $Y_1$ given $Y_2$,
and the second term, $f_{\nu}(y_2),$ is the marginal density of $
Y_2$, obtained from the joint density $f_\nu^{(2)}(y_1,y_2)$. By
convexity of $A$, for all $y_2$, $f_\nu(\cdot|y_2) \in A $. Hence, we have
\[
\E_0 \left[\left(\frac{f_\nu(y_1|y_2)}{\fbest(y_1)}\right
)^\alpha\Big{|} y_2\right] \leq\ e^{-\delta}.
\]
Since $f_\nu(y_2)\in A$, $\E_0[ \frac{f_{\nu}(y_2)}{\fbest
(y_2)}]^\alpha\leq e^{-\delta}$. Therefore,
\[
\E_0 \left[\frac{f_\nu(y_1,y_2)}{\fbest(y_1,y_2)}\right]^\alpha
\leq e^{-2\delta}.
\]
A similar induction argument for general $n$ completes the proof.
\end{proof}

\begin{theorem} \label{thm.convexA}
Suppose Assumption 1 holds. If $ A \subset\Fconv$ is convex and
satisfies $(i)$ (or equivalently, $(ii)$ or $(iii)$) of Proposition
\ref{KVKLrelation}, then
\[
\Pi(A | Y_{1:n}) \rightarrow0 \qquad\prob_0\mbox{-a.s\xch{.}{..}}
\]
\end{theorem}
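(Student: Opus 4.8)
The plan is to exploit the posterior representation~(\ref{posterior}) together with Proposition~\ref{post.denom}, which already disposes of the denominator. Writing $Z_n := \fhatn_A(Y_{1:n})/\fbestn(Y_{1:n})$ and noting $\Pi(A)\le 1$, for any $\beta_0>0$ the posterior mass is
\[
\Pi(A\mid Y_{1:n}) = \frac{e^{n\beta_0}\,\Pi(A)\,Z_n}{e^{n\beta_0}\,\fhatn_\Pi(Y_{1:n})/\fbestn(Y_{1:n})}.
\]
By Proposition~\ref{post.denom} the denominator tends to $+\infty$ $\prob_0$-a.s.\ for every $\beta_0>0$, so it suffices to produce a single $\beta_0>0$ for which the numerator, equivalently $e^{n\beta_0}Z_n$, tends to $0$ $\prob_0$-a.s.; this is precisely condition~(\ref{equn2}).

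To control $Z_n$, first I would invoke Proposition~\ref{KVKLrelation}: since $A$ is convex, condition~(i) is equivalent to~(iii), so there exist $0<\alpha_0<1$ and $\eta>0$ with $\sup_{f\in A}\hstar_{\alpha_0}(\ftrue,f)<e^{-\eta}$. Applying Proposition~\ref{prop.convexA} to the probability measure $\nu=\Pi_A$ on $A$ (recall $\fhatn_A=\fhatn_{\Pi_A}$) then yields the key moment bound
\[
\E_0 Z_n^{\alpha_0} = \hstar_{\alpha_0}(\ftruen,\fhatn_A)\le e^{-n\eta}.
\]

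The remaining step is a routine Markov--Borel--Cantelli argument, and calibrating $\beta_0$ is the only place that needs care. For any $\eps>0$,
\[
\prob_0\bigl(e^{n\beta_0}Z_n>\eps\bigr) = \prob_0\bigl(e^{n\beta_0\alpha_0}Z_n^{\alpha_0}>\eps^{\alpha_0}\bigr) \le \eps^{-\alpha_0}\,e^{n\beta_0\alpha_0}\,\E_0 Z_n^{\alpha_0} \le \eps^{-\alpha_0}\,e^{-n(\eta-\beta_0\alpha_0)}.
\]
Choosing $\beta_0=\eta/(2\alpha_0)$ makes the exponent equal to $-n\eta/2$, so these probabilities are summable while $\beta_0>0$; by Borel--Cantelli and the nonnegativity of $Z_n$, $e^{n\beta_0}Z_n\to 0$ $\prob_0$-a.s. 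Feeding this $\beta_0$ into the displayed ratio, the numerator vanishes and the denominator diverges, which gives $\Pi(A\mid Y_{1:n})\to 0$ $\prob_0$-a.s.

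I expect the main (and essentially only) obstacle to be this simultaneous constraint on $\beta_0$: it must be strictly positive to match Proposition~\ref{post.denom}, yet small enough that $\eta-\beta_0\alpha_0>0$ so the tail sum converges, and the choice $\beta_0=\eta/(2\alpha_0)$ threads both. All the genuine work has been front-loaded into Propositions~\ref{KVKLrelation} and~\ref{prop.convexA}, so the theorem itself reduces to careful bookkeeping.
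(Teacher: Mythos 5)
Your proof is correct and follows essentially the same route as the paper's: invoke the convexity-based equivalence in Proposition~\ref{KVKLrelation} to obtain condition~(iii), apply Proposition~\ref{prop.convexA} to $\nu=\Pi_A$ for the bound $\E_0 Z_n^{\alpha_0}\le e^{-n\eta}$, and then use Markov plus Borel--Cantelli to verify~(\ref{equn2}) before dividing by the denominator controlled by Proposition~\ref{post.denom}. The only differences are cosmetic bookkeeping (you bound $\Pi(A)\le 1$ and calibrate $\beta_0=\eta/(2\alpha_0)$, where the paper keeps $\Pi^\alpha(A)$ and takes $2\beta_0<\delta$), so nothing further is needed.
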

\begin{proof} Suppose $ \sup_{f \in A} \hstar_\alpha(\ftrue,f) =
\sup_{f \in A} \E_0 (f/\fbest)^\alpha< e^{-\delta} $
for some $0<\alpha<1$ and $\delta>0$. Then, by Proposition~\ref
{prop.convexA}\xch{,}{ ,} $\hstar_\alpha(\ftruen, \fhatn_A)\leq e^{-n\delta
}$. Let $2\beta_0 < \delta$.
Then
\begin{align*}
&\prob_0\left(\int_A \prod_{i=1}^n \frac{f (Y_i)}{\fbest(Y_i)}
d\Pi(f) > e^{ -2n \beta_0 } \right)\\
& =\prob_0\left(\left(\int_A \prod_{i=1}^n \Pi(A)\frac{f
(Y_i)}{\fbest(Y_i)} d\Pi_A (f)\right)^\alpha> e^{ - 2n\alpha \beta
_0 } \right) \\
& \leq\Pi^\alpha(A) \cdot\hstar_\alpha(\ftruen, \fn_A)\cdot
e^{2n \beta_0 \alpha}
\end{align*}
Hence
\[
\prob_0\left\{\int_A \prod_{i=1}^n \frac{f (Y_i)}{\fbest(Y_i)} d\Pi
(f) > e^{ -2n \beta_0 } \right\} \leq e^{- n (\delta-2\beta_0 ) }.
\]
Since the expression on the right-hand side is summable, we observe by
using Borel--Cantelli lemma that (\ref{equn2}) is satisfied. This
observation, along with Proposition~\ref{post.denom}, gives the result.
\end{proof}
%
\section{Consistency with general metric}
\label{S:general}
Consistency requires the posterior to concentrate on neighborhoods of
$\fbest$ with respect to some metric $d$. In developing conditions for
consistency with respect to $d$, we encounter a few issues.

First, a necessary condition is that $\fbest$ be in the topological
support of $\Pi$ with respect to this metric. Assumption 1 by itself
does not ensure this. We present two examples in Appendix~\ref
{illust.eg} to illustrate this and point out the need for stronger
assumptions. The first example shows that while the presence of $\fbest
$ in the $L_1$ support of $\Pi$ is necessary for consistency, this is
not automatically guaranteed by the positivity of Kullback--Leibler
neighborhoods specified in Assumption 1. The next example demonstrates
that the presence of $\fbest$ in the $L_1$ support and Assumption 1 by
themselves are not enough to ensure consistency.

Second, since the complement of a $d$-neighborhood is not convex in
general, the equivalence in Proposition~\ref{KVKLrelation} is
inapplicable. One approach is to suitably cover the complement by
$d$-balls. This in turn requires that each ball satisfy one of the
three conditions in Proposition~\ref{KVKLrelation}. Motivated by
these, we investigate consequences of each of the following set of assumptions.
\begin{itemize}
\item[{}]{\bf Assumption 2a.} Every neighborhood $U= \{f \in\Fconv:
d(f,\fbest) < \epsilon\}$ contains a set of the form $\{f\in\Fconv:
\ \Kstar(\ftrue,f)<\delta\}$ for some $\delta>0$.
\item[] {\bf Assumption 2b.} Every neighborhood $U= \{f \in\Fconv:
d(f,\fbest) < \epsilon\} $ contains a set of the form $\{f\in\Fconv
: \inf_{0\leq\alpha\leq1} \hstar_{\alpha} (\ftrue,f) >
e^{-\delta}\}$ for some $\delta>0$.
\item[] {\bf Assumption 2c.} Every neighborhood $U= \{f \in\FF_0:
d(f, \fbest) < \epsilon\} $ contains a set of the form $\{f\in \FF
_0: \hstar_{\alpha_0}(\ftrue,f) > e^{-\delta}\}$ for some $0<\alpha
_0<1$ and $\delta>0$.
\end{itemize}

We note that Assumptions 2a and 2b are stated in terms of the
convexification $\Fconv$ of $\FF_0$, whereas Assumption 2c is stated
in terms of $\FF_0$. The presence of $\Fconv$ makes it hard to verify
the first two assumptions in non-convex models. However, we study the
consequences of these assumptions in a general metric space because of
the insight it provides into the requirements on the metric for
consistency and shows the usefulness of Assumption 2c in non-convex
models. In the next section, we discuss sufficient conditions for
Assumptions 2a, 2b, and 2c, with respect to $L_1$ and weak topologies.

For the rest of this section, we study posterior consistency based on
each of these assumptions. We find that Assumption 2a is mainly useful
when $\FF_0$ is convex and compact, Assumption 2b is useful when $\FF
_0$ is convex but may not be compact, and Assumption 2c is useful when
the family is neither convex nor compact.

The following theorem based on Assumption 2a is an easy consequence of
Theorem~\ref{thm.convexA}.
\begin{theorem}\label{KL.compact}
Let $d$ be a metric such that $d$-balls in $\Fconv$ are convex sets
and $\FF_0$ is compact with respect to $d$. Let $U = \{f\in\Fconv:
d(f,\fbest)< \epsilon\}$. Suppose Assumptions 1
and 2a hold. Then
\[
\Pi( U^{c} | Y_{1:n}) \rightarrow1 \qquad\prob_0\mbox{-a.s\xch{.}{..} }
\]
\begin{proof}
By Assumption 2a, let $\{f\in\Fconv: \Kstar( \ftrue, f) < \delta\}
\subset \{f\in\Fconv: d(f,\fbest)< \epsilon/2 \}$. Since $U^c\cap
\FF_0$ is compact, it can be covered by $B_1, B_2, \ldots, B_k$ all
contained in $\Fconv$ with $ B_ i = \{f \in\Fconv: d(f,f_i) <
\epsilon/3 \}$ for some $f_1,f_2, \dots, f_k \in\FF_0$. Each of
these balls is convex and disjoint from $\{f\in\Fconv: d(f,\fbest)<
\epsilon/2 \}$. Assumption 2a ensures that each $B_i$ satisfies
property (i) of Proposition~\ref{KVKLrelation}. Since there are
finitely many such sets, the result follows using Theorem~\ref{thm.convexA}.
\end{proof}
\end{theorem}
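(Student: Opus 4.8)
The plan is to reduce the statement to Theorem~\ref{thm.convexA} by exhibiting a \emph{finite} cover of $U^{c}\cap\FF_0$ by convex $d$-balls, each of which is separated from $\fbest$ and therefore satisfies condition $(i)$ of Proposition~\ref{KVKLrelation}. First I would invoke Assumption 2a to fix a $\delta>0$ for which $\{f\in\Fconv:\Kstar(\ftrue,f)<\delta\}\subset\{f\in\Fconv:d(f,\fbest)<\epsilon/2\}$. This is the only role Assumption 2a plays: it confines the KL-small densities to a small $d$-ball about $\fbest$, so that any set staying away from $\fbest$ in $d$ is automatically bounded away from $\fbest$ in $\Kstar$.

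Next, since $U^{c}\cap\FF_0$ is a closed subset of the $d$-compact family $\FF_0$, it is itself compact, so I can cover it by finitely many balls $B_i=\{f\in\Fconv:d(f,f_i)<\epsilon/3\}$ with centers $f_1,\dots,f_k\in U^{c}\cap\FF_0$, whence $d(f_i,\fbest)\geq\epsilon$. The geometric step is the triangle inequality: every $g\in B_i$ obeys $d(g,\fbest)>\epsilon-\epsilon/3=2\epsilon/3>\epsilon/2$, so each $B_i$ is disjoint from $\{d(\cdot,\fbest)<\epsilon/2\}$ and hence, by the containment above, disjoint from $\{\Kstar<\delta\}$. Thus $\inf_{f\in B_i}\Kstar(\ftrue,f)\geq\delta>\delta/2$, so each $B_i$ meets condition $(i)$ of Proposition~\ref{KVKLrelation}. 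Because $d$-balls are convex by hypothesis, each $B_i$ is convex, so Theorem~\ref{thm.convexA} (which also invokes Assumption 1) yields $\Pi(B_i\mid Y_{1:n})\to 0$ $\prob_0$-a.s.\ for each $i$.

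To finish, I would use that the prior charges only $\FF_0$, so $\Pi(U^{c}\mid Y_{1:n})=\Pi(U^{c}\cap\FF_0\mid Y_{1:n})\leq\sum_{i=1}^{k}\Pi(B_i\mid Y_{1:n})$; a finite sum of sequences each tending to $0$ almost surely tends to $0$ almost surely, giving $\Pi(U^{c}\mid Y_{1:n})\to 0$ $\prob_0$-a.s., i.e., the posterior concentrates on the neighborhood $U$ of $\fbest$.

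The computational heart is already packaged inside Theorem~\ref{thm.convexA}, so I do not expect a serious obstacle; the only genuine care needed is the radius bookkeeping (the inner radius $\epsilon/2$ against the covering radius $\epsilon/3$) so that the triangle inequality cleanly separates each $B_i$ from the KL-neighborhood, together with the observation that the cover need only touch $\FF_0$, where the prior lives, rather than all of $\Fconv$. The compactness hypothesis is precisely what replaces a potentially uncountable union of annuli by a finite, trivially summable one, which is why the \emph{almost sure} conclusion survives the union bound.
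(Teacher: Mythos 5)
Your proposal is correct and follows essentially the same route as the paper's own proof: invoke Assumption 2a to trap $\{\Kstar(\ftrue,f)<\delta\}$ inside the $\epsilon/2$-ball, use $d$-compactness of $U^c\cap\FF_0$ to extract a finite cover by convex $\epsilon/3$-balls, verify condition $(i)$ of Proposition~\ref{KVKLrelation} for each ball, and conclude via Theorem~\ref{thm.convexA} and a finite union bound --- you merely make explicit the triangle-inequality bookkeeping ($2\epsilon/3>\epsilon/2$) and the choice of centers in $U^c\cap\FF_0$ that the paper leaves implicit. Note also that your conclusion $\Pi(U^c\mid Y_{1:n})\to 0$ $\prob_0$-a.s.\ is the intended statement; the displayed ``$\Pi(U^c\mid Y_{1:n})\rightarrow 1$'' in the theorem is evidently a typographical slip in the paper.
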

In the proof of Proposition~\ref{KVKLrelation} provided in Appendix
\ref{proofs}, it is clear that the choice of $\alpha_0$ and $\eta$
made while establishing equivalence of conditions depends on the
specific set $A$. Hence, it does not appear that the approach based on
Assumption 2a can be carried easily beyond convex and compact families.
Below, we take an approach based on Assumption 2b, which is more in
line with \cite{Kleijn_van2006}. Theorem~\ref{thm.convexA} derives
posterior consistency for convex sets. Since complement of a
$d$-neighborhoods will not be convex in general, the approach here is
to cover with a suitable number of convex sets with diminishing
posterior probabilities. Towards this end, we make the following assumption.
\begin{itemize}
\item[] {\bf Assumption 3.} There exist subsets $\{V_n, W_n\}_{n \geq
1} $ such that
$\FF_0 \subseteq V_n \cup W_n$ and
\item[] (a) $\Pi( W_n) < e^{-n\Delta}$ for some $\Delta>2\E_0\log
\frac{f_0}{f_*}$.
\item[] (b) For every $\epsilon>0$, $ {V}_n$ can be covered by $J_n $
$d$-balls in $\Fconv$ of radius less than $\epsilon$, where $J_n$ is
a polynomial in $n$, i.e., for some $r>0$, $J_n \leq a n^r$.
\end{itemize}

The simple lemma below will be useful to derive the results that follow.
\begin{lemma}
\label{lem2new}
Let $T_i, i =1,2 ,\ldots, k$ be non-negative random variables. Then
\[
\prob\left( \sum_{i=1}^{k} T_i > e^{-\epsilon}\right) \leq
e^\epsilon\sum_{i=1}^{k}\inf_{0 \leq\alpha\leq1} \E T_i^\alpha.
\]
\begin{proof} The result follows since, $ \prob( \sum_{i=1}^{k} T_i >
e^{-\epsilon}) = \prob( \sum_{i=1}^{k}\min(T_i, 1) > e^{-\epsilon
}) $ and $ \min(T_i , 1 ) \leq T_i^{\alpha_i}$ for any $0<\alpha
_i<1$. Consequently,
\begin{align*}
\prob\left( \sum_{i=1}^{k} T_i > e^{-\epsilon}\right)\leq\prob
\left( \sum_{i=1}^{k} T_i ^{\alpha_i} > e^{-\epsilon}\right) \leq
e^\epsilon\sum_{i=1}^{k} \E T_i ^{\alpha_i} .
\end{align*}

Taking the infimum over $\alpha_1, \alpha_2, \ldots,\alpha_k$ on
both sides, we get the result.
\end{proof}
\end{lemma}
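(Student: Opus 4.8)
The plan is to convert the tail event for the untruncated sum into a tail event for suitable fractional powers of the $T_i$, apply a one-line Markov bound, and then optimize the exponents. The whole argument is elementary; the only point needing care is a truncation identity that is available precisely because the threshold $e^{-\epsilon}$ lies below $1$.

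First I would use the hypothesis $\epsilon>0$, so that $e^{-\epsilon}<1$, to truncate each variable at $1$ without changing the event:
\[
\left\{\sum_{i=1}^k T_i > e^{-\epsilon}\right\} = \left\{\sum_{i=1}^k \min(T_i,1) > e^{-\epsilon}\right\}.
\]
The inclusion $\supseteq$ is immediate from $\min(T_i,1)\le T_i$. For $\subseteq$, fix an outcome with $\sum_i T_i > e^{-\epsilon}$: if some $T_j\ge 1$ then $\sum_i \min(T_i,1)\ge 1 > e^{-\epsilon}$, while if every $T_i<1$ then $\min(T_i,1)=T_i$ for all $i$ and the two sums agree. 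This is the one step where the assumption $\epsilon>0$ (equivalently $e^{-\epsilon}<1$) is used, and it is the part I expect to require the most care to state correctly.

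Next, for any fixed exponents $\alpha_1,\dots,\alpha_k\in[0,1]$ I would invoke the pointwise bound $\min(T_i,1)\le T_i^{\alpha_i}$, valid because $T_i^{\alpha_i}\ge T_i=\min(T_i,1)$ when $T_i\le 1$ and $T_i^{\alpha_i}\ge 1=\min(T_i,1)$ when $T_i>1$. Summing over $i$ and using monotonicity of probability together with the truncation identity gives
\[
\prob\left(\sum_{i=1}^k T_i > e^{-\epsilon}\right) \le \prob\left(\sum_{i=1}^k T_i^{\alpha_i} > e^{-\epsilon}\right) \le e^{\epsilon}\sum_{i=1}^k \E\,T_i^{\alpha_i},
\]
where the last step is Markov's inequality applied to the non-negative variable $\sum_i T_i^{\alpha_i}$.

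Finally, the left-hand side is free of the $\alpha_i$, while the right-hand side is a sum whose $i$th term depends only on $\alpha_i$; hence the infimum of the bound over the vector $(\alpha_1,\dots,\alpha_k)$ equals $\sum_i \inf_{0\le\alpha\le 1}\E\,T_i^{\alpha}$. Taking that infimum (no attainment is needed, since we work with infima throughout) yields the claimed inequality. The substantive content is thus just the truncation identity, which requires the threshold to lie below $1$, plus the observation that the separability of the bound across $i$ lets the exponents be optimized coordinatewise rather than jointly.
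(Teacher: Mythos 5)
Your proof is correct and follows essentially the same route as the paper's: truncate each $T_i$ at $1$, dominate $\min(T_i,1)$ by $T_i^{\alpha_i}$, apply Markov's inequality, and optimize the exponents coordinatewise since the bound separates across $i$. You merely make explicit the details the paper leaves implicit, in particular that the truncation identity needs the threshold $e^{-\epsilon}<1$ (i.e., $\epsilon>0$, which holds in every application of the lemma in the paper, where $\epsilon$ is $n\beta$ with $\beta>0$).
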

We now derive posterior consistency under Assumptions 2b and 3,
followed by a result that is analogous to the posterior consistency
result in \cite{Kleijn_van2006}.
\begin{theorem}
\label{dconsis}
Let metric $d$ be such that $d$-balls in $\Fconv$ are convex sets and
let $U_\epsilon=\{f\in\Fconv: d(f,\fbest) < \epsilon\} $. If
Assumptions 1, 2b and 3 hold, then
\[
\Pi(U_{\epsilon}^c | Y_{1:n} ) \rightarrow0\qquad\prob_0 \mbox{-a.s\xch{.}{..}}
\]

\begin{proof} Let $U_{\epsilon/2} = \{f \in\Fconv: d(\fbest, f) <
\epsilon/2\} $, and as guaranteed by Assumption 2b, let
\[
\{f\in\Fconv: \inf_{0\leq\alpha\leq1} \hstar_{\alpha} (\ftrue
,f) > e^{-\delta}\} \subset U_{\epsilon/2} .
\]
Let $A_1,A_2, \dots, A_{J_n}$ be open $d$-balls of radius $\epsilon
/3$ that cover $ U_{\epsilon}^c \cap V_n$. Then,
\begin{align*}
\prob_0 \left( \int_{U_{\epsilon}^c \cap V_n} \fn/\fbestn d\Pi >
e^{-n\beta} \right) &\leq\prob_0\left( \sum_{i=1}^{J_n} \int
_{A_i} \fn/\fbestn d\Pi > e^{-n\beta} \right)\\
& \leq\prob_0\left( \sum_{i=1}^{J_n} \int_{A_i} \fn/\fbestn d\Pi
_i > e^{-n\beta} \right),
\end{align*}
where $ \Pi_i$ is $ \Pi$ normalized to 1 on $A_i $. Set $T_i = \int
_{A_i} \fn/\fbestn d\Pi_i$ so that the expression on the right-hand
side of the last inequality can be written as:
\[
\prob_0\left( \sum_{i=1}^{J_n} T_i > e^{-n\beta} \right).
\]
By Lemma~\ref{lem2new},
\[
\prob_0\left( \sum_{i=1}^{J_n} T_i > e^{-n\beta} \right) \leq e^{n
\beta}\sum_i\inf_{0 \leq\alpha\leq1} \E_0 T_i^\alpha.
\]
Since $A_i$ does not intersect $U_{\epsilon/2}$, $\inf_{0\leq\alpha
\leq1}\E_0 (f/\fbest)^{\alpha}< e^{-\delta}$. By
Proposition~\ref{prop.convexA}, for each~$i$, $ \inf_{0 \leq\alpha
\leq1} \E T_i^\alpha< e^{-n \delta}$, so that
\[
\prob_0 \left( \int_{U_{\epsilon}^c \cap V_n} \fn/\fbestn d\Pi >
e^{-n\beta} \right)< e^{n\beta} n^r e^{-n\delta}.
\]
A choice of small enough $\beta$ and an application of Borel--Cantelli
lemma with $\beta_0<\beta$ gives
\[
e^{n\beta_0}\int_{U_{\epsilon}^c \cap V_n} \fn/\fbestn d\Pi
\rightarrow\ 0 \qquad\prob_0\mbox{-a.s\xch{.}{..}}
\]
This, along with Proposition~\ref{post.denom}, gives $\Pi(
U_{\epsilon}^c \cap V_n | Y_{1:n})\rightarrow\ 0 \quad \prob
_0\mbox{-a.s\xch{.}{..}}$

As for $W_n$, first an argument in the lines of Lemma 4.4.1 of \cite
{Ghosh_RVR2003} can be used to conclude that for any $\beta>\E_0\log
\frac{\ftrue}{\fbest}$,
%
\begin{eqnarray}
\liminf_{n \rightarrow\infty} e^{n \beta}\int_{\Fconv} \fn
/\ftruen d\Pi= \infty\qquad \prob_0\mbox{-a.s\xch{.}{..}}\label{Wndenom}
\end{eqnarray}
Then, for $\Delta> 2\E_0\log\frac{\ftrue}{\fbest}$, an
application of Markov's inequality gives
%
\begin{eqnarray}
&& \prob_0 \left( \int_{W_n} \fn/\ftruen d\Pi > e^{-n\frac{\Delta
}{2}} \right) \nonumber\\
&&\leq e^{n\frac{\Delta}{2}} \cdot\int_{W_n} \E_0\left(\fn
/\ftruen\right) d\Pi= e^{n\frac{\Delta}{2}} \Pi(W_n) \leq
e^{-n\frac{\Delta}{2}}.\label{Wnnumer}
\end{eqnarray}
Equations \eqref{Wndenom} and \eqref{Wnnumer} together imply that
$\Pi(W_n|Y_{1:n})\rightarrow\ 0 \quad\prob_0\mbox{-a.s\xch{.}{..}}$
\end{proof}
\end{theorem}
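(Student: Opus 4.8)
The plan is to prove $\Pi(U_\epsilon^c\mid Y_{1:n})\to 0$ by exploiting the posterior representation (\ref{posterior}) together with the reduction (\ref{equn2}): since Proposition~\ref{post.denom} guarantees $e^{n\beta_0}\,\fhatn_\Pi/\fbestn\to\infty$ $\prob_0$-a.s. for every $\beta_0>0$, it suffices to show that the numerator $\int_{U_\epsilon^c}\fn/\fbestn\,d\Pi$ decays faster than $e^{-n\beta_0}$ for some $\beta_0>0$. I would split the region of integration using the decomposition $\FF_0\subseteq V_n\cup W_n$ from Assumption~3, bounding
\[
\Pi(U_\epsilon^c\mid Y_{1:n})\le \Pi\bigl(U_\epsilon^c\cap V_n\mid Y_{1:n}\bigr)+\Pi\bigl(W_n\mid Y_{1:n}\bigr),
\]
and handling the two pieces by genuinely different devices, since only $V_n$ carries divergence control.

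For the $V_n$-piece, I would first cover $V_n$ by $J_n\le a n^r$ convex $d$-balls of small radius (Assumption~3(b), applied with radius $\epsilon/6$), retain those meeting $U_\epsilon^c$, and re-center each at a point of $U_\epsilon^c\cap V_n$; by the triangle inequality the enlarged balls $A_1,\dots,A_{J_n}$ have radius $\epsilon/3$, still cover $U_\epsilon^c\cap V_n$, and are disjoint from $U_{\epsilon/2}$. Assumption~2b then forces $\inf_{0\le\alpha\le1}\hstar_\alpha(\ftrue,f)\le e^{-\delta}$ for every $f\in A_i$. Because each $A_i$ is convex, the minimax identity behind Proposition~\ref{KVKLrelation} upgrades this to a \emph{single} exponent: there is $\alpha_i\in(0,1)$ with $\sup_{f\in A_i}\hstar_{\alpha_i}(\ftrue,f)\le e^{-\delta'}$ for a common $\delta'>0$. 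Proposition~\ref{prop.convexA} then yields $\E_0 T_i^{\alpha_i}\le e^{-n\delta'}$ for $T_i:=\int_{A_i}\fn/\fbestn\,d\Pi_i$, so $\inf_\alpha\E_0 T_i^\alpha\le e^{-n\delta'}$. Feeding this into Lemma~\ref{lem2new} gives $\prob_0(\sum_i T_i>e^{-n\beta})\le e^{n\beta}J_n e^{-n\delta'}$, which is summable once $\beta<\delta'$ because $J_n$ is polynomial; Borel--Cantelli together with Proposition~\ref{post.denom} then gives $\Pi(U_\epsilon^c\cap V_n\mid Y_{1:n})\to 0$ $\prob_0$-a.s.

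For the $W_n$-piece, convexity and divergence control are both unavailable, so I would switch to the true-density normalization, writing $\Pi(W_n\mid Y_{1:n})=\bigl(\int_{W_n}\fn/\ftruen\,d\Pi\bigr)\big/\bigl(\int_{\Fconv}\fn/\ftruen\,d\Pi\bigr)$. For the numerator, $\E_0(\fn/\ftruen)=1$ gives, via Fubini, $\E_0\int_{W_n}\fn/\ftruen\,d\Pi=\Pi(W_n)<e^{-n\Delta}$, so Markov and Borel--Cantelli yield $\int_{W_n}\fn/\ftruen\,d\Pi\le e^{-n\Delta/2}$ eventually, a.s. For the denominator I would invoke the analogue of Proposition~\ref{post.denom} with $\ftruen$ in place of $\fbestn$ (obtained by the same argument under Assumption~1, now measuring divergence from $\ftrue$), namely $\liminf_n e^{n\beta}\int_{\Fconv}\fn/\ftruen\,d\Pi=\infty$ for every $\beta>\E_0\log(\ftrue/\fbest)$. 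Because Assumption~3(a) requires $\Delta>2\,\E_0\log(\ftrue/\fbest)$, I can choose $\beta$ strictly between $\E_0\log(\ftrue/\fbest)$ and $\Delta/2$, which makes the ratio vanish a.s.

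I expect the $V_n$ covering to be the main obstacle: passing from the pointwise, $\alpha$-dependent control in Assumption~2b to a uniform exponential rate valid \emph{simultaneously} over all $J_n$ balls. This is exactly where convexity of the $d$-balls is indispensable—the minimax identity of Proposition~\ref{KVKLrelation} trades $\sup_f\inf_\alpha$ for $\inf_\alpha\sup_f$, producing a fixed $\alpha_i$ per ball and a common rate $\delta'$, while Lemma~\ref{lem2new} is engineered so that summing over polynomially many balls costs only the factor $J_n$. A secondary subtlety is the factor of two in Assumption~3(a): the crude Markov bound on $W_n$ controls the numerator only through $\Pi(W_n)$, so the prior-mass decay rate $\Delta$ must beat \emph{twice} the Kullback--Leibler value $\E_0\log(\ftrue/\fbest)$ in order to dominate the $e^{-n\E_0\log(\ftrue/\fbest)}$ decay of the denominator; the two pieces also live under different normalizations ($\fbestn$ versus $\ftruen$), which is harmless because those factors cancel inside each posterior ratio.
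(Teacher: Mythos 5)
Your proposal is correct and follows essentially the same route as the paper: the same $V_n$/$W_n$ split, the same covering of $U_\epsilon^c\cap V_n$ by convex $d$-balls fed into Lemma~\ref{lem2new} and Proposition~\ref{prop.convexA}, and the same switch to the $\ftruen$-normalization with Markov's inequality and the constraint $\Delta>2\,\E_0\log(\ftrue/\fbest)$ for the $W_n$ piece. If anything, you make explicit two details the paper leaves implicit: re-centering the cover at points of $U_\epsilon^c$ so that the radius-$\epsilon/3$ balls are genuinely disjoint from $U_{\epsilon/2}$, and invoking the minimax identity (Proposition~\ref{minimax_h}) to extract a single exponent $\alpha_i$ per ball before applying Proposition~\ref{prop.convexA}.
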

The approach taken in Theorem~\ref{dconsis} can be adapted to derive a
result that is analogous to Corollary 2.1 of \cite{Kleijn_van2006}.
The entropy condition in their paper assumes that each set $S_j=\{f\in
\FF_0: j\epsilon\leq d(f,\fbest) < (j+1) \epsilon\}$ can be covered
by $N_j$ convex sets $B_k$ with the property $\sup_{f\in B_k}\inf
_{0\leq\alpha\leq1}\hstar_\alpha(\ftrue,\fbest)<e^{-j^2\epsilon
^2/4}$. In our approach, this corresponds to a stronger version of
Assumption 2b as stated in the theorem below. If $\sup_{j\geq1}
N_j<\infty$, then they show that $\E_0[\Pi(d(f,\fbest)\geq
\epsilon|Y_{1:n})]\rightarrow\ 0$. An analogous result using
our approach is as follows.
\begin{theorem}
\label{T:analogKV}
Let metric $d$ be such that $d$-balls in $\Fconv$ are convex sets.
Suppose Assumption 1 and a stronger version of Assumption 2b (where
$\delta$ depends on $\epsilon$) hold, i.e.,
\begin{itemize}
\item[] For any $\epsilon>0$, $U_{\epsilon}=\{f \in\Fconv:
d(f,\fbest) < \epsilon\} $ contains a set of the form $\{f\in\Fconv
: \inf_{0\leq\alpha\leq1} \hstar_{\alpha} (\ftrue,f) >
e^{-\epsilon^2}\}$.
\end{itemize}
Let $N_j$ be the minimum number of $d$-balls in $\Fconv$ of radius
$j\epsilon/3$ that cover the set $S_j=\{f\in\FF_0: j\epsilon\leq
d(f,\fbest) < (j+1) \epsilon\}$. If $\sup_{j\geq1}N_j < \infty$,
then
\[
\Pi(d(f,\fbest)>\epsilon|Y_{1:n})\rightarrow\ 0 \qquad\prob
_0\mbox{-a.s\xch{.}{..}}
\]
\begin{proof}
Along the lines of Lemma~\ref{lem2new}, we get
\begin{align*}
\prob_0 \left( \int_{U_{\epsilon}^c} \fn/\fbestn d\Pi >
e^{-n\beta} \right)
\leq e^{n\beta}\cdot \sum_{j=1}^{\infty} \inf_{0\leq\alpha\leq
1}\E_0\left( \int_{S_j}\fn/\fbestn d\Pi_j\right)^{\alpha}.
\end{align*}
Note that for any $f_1$ such that $d(f,f_1)\geq j\epsilon$, $B_{f_1}=\{
f: \ d(f,f_1)<j\epsilon/3\}$ does not intersect with $U_{j\epsilon
/2}$. Hence $\sup_{f\in B_{f_1}}\inf_{0\leq\alpha\leq1} \E_0
(f/\fbest)^{\alpha}<e^{-j^2\epsilon^2/4}$.

Using Proposition~\ref{prop.convexA} and the fact that $S_j$ can be
covered by $N_j$ convex sets of the form $B_{f_1}$, we get
\begin{align*}
\prob_0 &\left( \int_{U_{\epsilon}^c} \frac{\fn}{\fbestn} d\Pi
> e^{-n\beta} \right) \leq e^{n\beta}\cdot\sum_{j=1}^{\infty}
N_j\cdot e^{-nj^2\epsilon^2/4} \leq e^{n\beta}\cdot\sup_{j\geq
1}N_j \cdot\frac{e^{-n\epsilon^2/4}}{1-e^{-n\epsilon^2/4}}.
\end{align*}
A suitable choice of small enough $\beta$ and an application of
Borel--Cantelli lemma, gives that for $\beta_0<\beta$, $e^{n\beta
_0}\int_{U_{\epsilon}^c} \fn/\fbestn d\Pi\rightarrow 0 \quad  \prob
_0\mbox{-a.s\xch{.}{..}}$ This, along with Assumption 1, gives the result.
\end{proof}
\end{theorem}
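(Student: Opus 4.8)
The plan is to follow the template already used for Theorem~\ref{dconsis}, treating the numerator and denominator of the posterior~\eqref{posterior} separately. By Proposition~\ref{post.denom}, Assumption~1 guarantees that $\fhatn_\Pi(Y_{1:n})/\fbestn(Y_{1:n})$ eventually exceeds $e^{-n\beta}$, $\prob_0$-a.s., for every $\beta>0$; so it suffices to produce some $\beta_0>0$ with $e^{n\beta_0}\int_{U_\epsilon^c}\fn/\fbestn\,d\Pi\to0$ $\prob_0$-a.s. Since $\Pi$ is carried by $\FF_0$, I would write $U_\epsilon^c\cap\FF_0=\bigcup_{j\ge1}S_j$ with $S_j=\{f\in\FF_0:\ j\epsilon\le d(f,\fbest)<(j+1)\epsilon\}$, and cover each shell $S_j$ by its $N_j$ convex $d$-balls $B_{j,1},\dots,B_{j,N_j}$ of radius $j\epsilon/3$, taking the centers in $S_j$ (so at $d$-distance at least $j\epsilon$ from $\fbest$).

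The tail estimate then comes from the countable-index analogue of Lemma~\ref{lem2new}. Setting $T_{j,k}=\int_{B_{j,k}}\fn/\fbestn\,d\Pi$, monotonicity of the integral gives $\int_{U_\epsilon^c}\fn/\fbestn\,d\Pi\le\sum_{j\ge1}\sum_{k=1}^{N_j}T_{j,k}$, whence
\[
\prob_0\Bigl(\int_{U_\epsilon^c}\fn/\fbestn\,d\Pi>e^{-n\beta}\Bigr)\le e^{n\beta}\sum_{j\ge1}\sum_{k=1}^{N_j}\inf_{0\le\alpha\le1}\E_0 T_{j,k}^{\alpha}.
\]
The geometric point is that a radius-$j\epsilon/3$ ball centered in $S_j$ is disjoint from $U_{j\epsilon/2}$: by the triangle inequality every $f$ in such a ball satisfies $d(f,\fbest)>j\epsilon-j\epsilon/3=2j\epsilon/3\ge j\epsilon/2$.

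On each $B_{j,k}$ I would invoke the stronger form of Assumption~2b at scale $j\epsilon/2$: read contrapositively, its containment statement forces $\inf_{0\le\alpha\le1}\hstar_\alpha(\ftrue,f)\le e^{-(j\epsilon/2)^2}=e^{-j^2\epsilon^2/4}$ for every $f\notin U_{j\epsilon/2}$, so $\sup_{f\in B_{j,k}}\inf_{0\le\alpha\le1}\hstar_\alpha(\ftrue,f)\le e^{-j^2\epsilon^2/4}$. Because $B_{j,k}$ is convex, the minimax equivalence in Proposition~\ref{KVKLrelation} converts this $\sup\inf$ bound into a single exponent: there is $\alpha_0=\alpha_0(j,k)\in(0,1)$ with $\sup_{f\in B_{j,k}}\hstar_{\alpha_0}(\ftrue,f)<e^{-j^2\epsilon^2/4}$. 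Proposition~\ref{prop.convexA}, applied with $\nu=\Pi_{B_{j,k}}$, then gives $\E_0 T_{j,k}^{\alpha_0}\le\Pi(B_{j,k})^{\alpha_0}\,\hstar_{\alpha_0}(\ftruen,\fhatn_{\Pi_{B_{j,k}}})\le e^{-nj^2\epsilon^2/4}$, so $\inf_\alpha\E_0 T_{j,k}^\alpha\le e^{-nj^2\epsilon^2/4}$. Summing and using $j^2\ge j$ yields the geometric bound
\[
\prob_0\Bigl(\int_{U_\epsilon^c}\fn/\fbestn\,d\Pi>e^{-n\beta}\Bigr)\le e^{n\beta}\,\Bigl(\sup_{j\ge1}N_j\Bigr)\,\frac{e^{-n\epsilon^2/4}}{1-e^{-n\epsilon^2/4}}.
\]

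Finally I would choose $\beta<\epsilon^2/4$, making the right-hand side summable in $n$; Borel--Cantelli then gives $\int_{U_\epsilon^c}\fn/\fbestn\,d\Pi\le e^{-n\beta}$ eventually $\prob_0$-a.s., i.e.\ $e^{n\beta_0}\int_{U_\epsilon^c}\fn/\fbestn\,d\Pi\to0$ for any $\beta_0<\beta$, and combining with Proposition~\ref{post.denom} forces $\Pi(U_\epsilon^c\mid Y_{1:n})\to0$, which dominates $\Pi(d(f,\fbest)>\epsilon\mid Y_{1:n})$. I expect the main obstacle to be organizing the uniform control across the countably many shells: the bound must be summable both over $j$ (for fixed $n$) and over $n$ (for Borel--Cantelli), and this succeeds only because applying Assumption~2b at the growing scale $j\epsilon/2$ produces the \emph{quadratic} rate $e^{-nj^2\epsilon^2/4}$, which is fast enough to absorb $\sup_j N_j<\infty$ and leave a convergent geometric series. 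The accompanying subtlety is the minimax passage of Proposition~\ref{KVKLrelation}, needed so that the fixed-exponent hypothesis of Proposition~\ref{prop.convexA} can be met on each covering ball.
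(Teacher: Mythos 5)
Your proposal is correct and follows essentially the same route as the paper's proof: the shell decomposition $U_\epsilon^c \cap \FF_0 = \bigcup_j S_j$, the covering of each $S_j$ by $N_j$ convex balls of radius $j\epsilon/3$ disjoint from $U_{j\epsilon/2}$, the strengthened Assumption 2b at scale $j\epsilon/2$ yielding the rate $e^{-nj^2\epsilon^2/4}$ via Proposition~\ref{prop.convexA}, the geometric summation, Borel--Cantelli with $\beta < \epsilon^2/4$, and Proposition~\ref{post.denom} for the denominator. The only difference is cosmetic: you spell out the minimax passage (Proposition~\ref{KVKLrelation}) needed to trade the $\sup_f \inf_\alpha$ bound for a fixed exponent $\alpha_0(j,k)$ before invoking Proposition~\ref{prop.convexA}, a step the paper leaves implicit.
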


As noted earlier, Assumptions 2a and 2b are stated in terms of $\Fconv
$, which makes them difficult to verify for non-convex models.
Assumption 2c helps handle the case of non-convex families. We now
derive consistency results under Assumption 2c and the following
continuity assumption.
\begin{itemize}
\item[] {\bf Assumption 4.} For any $f_1,f_2 \in\FF_0$ and for some
monotonically increasing function $\eta(\cdot)$ with $\eta(0)=0$ we have
\begin{eqnarray*}
\E_0\left| \frac{f_1}{\fbest}-\frac{f_2}{\fbest}\right|\leq\eta
(d(f_1,f_2)).
\end{eqnarray*}
\end{itemize}
Note that Assumption 4 is satisfied by $d=L_1(\mu_0)$, in which case
$\eta(\cdot)$ is just the identity function. If $\ftrue/\fbest\in
L_{\infty}(\mu)$ then the assumption is satisfied by $L_1(\mu)$.
Also if $\ftrue/\fbest\in L_2(\mu)$, an application of
Cauchy--Schwartz inequality shows that it is satisfied for $d=L_2(\mu)$.

Towards deriving consistency,
the next lemma shows that if Assumptions 2c and 4 hold then
$U_{\epsilon}^c$ can be covered by $d$-balls whose posterior
probabilities diminish to zero with increasing $n$.
\begin{lemma}
\label{lemnew4}
Let $U^{c}=\{f\in\FF_0: d(f,\fbest)\geq\epsilon\}$.
Suppose Assumptions 2c and 4 hold with respect to $U$ and a metric $d$.
Let $\alpha_0, \delta$ be as in Assumption 2c and $\eta(\cdot)$ as
in Assumption~4.
Then, for any $f_1\in U^{c} $ there exists an open ball $B(f_1, r)$
around $f_1$, with the radius $r$ depending only on $\delta, \alpha
_0$ and $\eta(\cdot),$ such that
\[
\E_0\left(\int_{B\left(f_1,r\right)}\fn/\fbestn d\Pi(f) \right
)^{\alpha_0}\leq e^{-n\frac{\delta}{2}}\cdot\Pi\left(B\left
(f_1,r\right)\right)^{\alpha_0}.
\]

\begin{proof} Let $r=\eta^{-1}(( e^{-\frac{\delta
}{2}}-e^{-\delta})^{\frac{1}{\alpha_0}})$ and $\nu
(\cdot)$ be any probability measure on $\FF_0$. \\Since $0<\alpha_0<1$,
\begin{eqnarray*}
&& \E_0\left(\int_{B\left(f_1,r\right)}f/\fbest\ d\nu(f) \right
)^{\alpha_0}\\
&& \leq\E_0\left(\int_{B\left(f_1,r\right)} \left|\frac
{f}{\fbest}-\frac{f_1}{\fbest}\right|d\nu(f) \right)^{\alpha_0}
+ \E_0\left(\int_{B\left(f_1,r\right)}\frac{f_1}{\fbest}d\nu(f)
\right)^{\alpha_0} \\
&& \mbox{ (then by Jensen's inequality)}\\
&& \leq\left(\int_{B\left(f_1,r\right)}\E_0\left|\frac
{f}{\fbest}-\frac{f_1}{\fbest}\right|d\nu(f) \right)^{\alpha_0}
+ \E_0\left(\frac{f_1}{\fbest} \right)^{\alpha_0}\nu(B\left
(f_1,r\right))^{\alpha_0}.
\end{eqnarray*}
By Assumption 4, the first term of the above inequality satisfies
\[
\left(\int_{B\left(f_1,r\right)}\E_0\left|\frac{f}{\fbest
}-\frac{f_1}{\fbest}\right|d\nu(f) \right)^{\alpha_0} \leq
(e^{-\frac{\delta}{2}} -e^{-\delta})\cdot\nu(B\left(f_1,r\right
))^{\alpha_0}.
\]
By Assumption 2c, the second term of the inequality is bounded as
\[
\E_0\left(\frac{f_1}{\fbest} \right)^{\alpha_0}\cdot\nu(B\left
(f_1,r\right))^{\alpha_0} < e^{-\delta}\cdot\nu(B\left
(f_1,r\right))^{\alpha_0}.
\]
Therefore, it follows that for any probability measure $\nu(\cdot)$
on $\FF_0$ we have
%
\begin{eqnarray}
\E_0\left(\int_{B\left(f_1,r\right)}f/\fbest\ d\nu(f) \right
)^{\alpha_0}\leq e^{-\frac{\delta}{2}}\cdot\nu(B\left(f_1,r\right
))^{\alpha_0}. \label{ncindstep1}
\end{eqnarray}
Equation (\ref{ncindstep1}) is the result for $n=1$. An induction
argument on $n$ along the lines of Proposition~\ref{prop.convexA} can
now be used to obtain the result. To see this for $n=2$, note that, as
in the proof of Proposition~\ref{prop.convexA}, we can write
\[
\left[\frac{f_\nu^{(2)}(y_1,y_2)}{f^{\star(2)}(y_1,y_2)}\right
]^{\alpha_0} = \left[\frac{f_{\nu}(y_1|y_2)}{\fbest(y_1)}\right
]^{\alpha_0} \left[ \frac{f_{\nu}(y_2)}{\fbest(y_2)}\right
]^{\alpha_0}.
\]
Now, by (\ref{ncindstep1}), $\E_(\frac{f_{\nu}(y_2)}{\fbest
(y_2)})^{\alpha_0}\leq e^{-\frac{\delta}{2}}\cdot\nu(B
(f_1,r))^{\alpha_0}$. Further, since (\ref{ncindstep1}) holds
for any probability measure, taking the measure $\frac{f(y_2)}{f_{\nu
}(y_2)}d\nu$, we get
\[
\E_0 \left[\left(\frac{f_\nu(y_1|y_2)}{\fbest(y_1)}\right
)^{\alpha_0} \Big{|} y_2\right] \leq\ e^{-\frac{\delta}{2}}.
\]
It therefore follows that for any probability measure $\nu$ on $\FF_0$,
\[
\E_0 \left[\frac{f_\nu^{(2)}(y_1,y_2)}{f^{\star
(2)}(y_1,y_2)}\right]^{\alpha_0} \leq\ e^{-2\cdot\frac{\delta
}{2}} \cdot\nu(B\left(f_1,r\right))^{\alpha_0}.
\]
A similar induction argument for general $n$ completes the proof.
\end{proof}
\end{lemma}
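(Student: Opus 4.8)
\section*{Proof proposal}

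The plan is to prove the bound first for $n=1$ and for an arbitrary probability measure $\nu$ on $\FF_0$, and then bootstrap to general $n$ by the disintegration/induction scheme already used in Proposition~\ref{prop.convexA}. The starting point is the contrapositive of Assumption 2c: since $f_1 \in U^{c}$ means $d(f_1,\fbest) \geq \epsilon$, the set inclusion in Assumption 2c forces $\hstar_{\alpha_0}(\ftrue,f_1) = \E_0(f_1/\fbest)^{\alpha_0} \leq e^{-\delta}$. Thus the center $f_1$ already enjoys a small $\alpha_0$-divergence from $\fbest$; the role of the ball is to transfer essentially the same bound to every $f \in B(f_1,r)$, and the radius $r$ must be chosen so that the transfer costs at most a factor $e^{\delta/2}$.

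For the $n=1$ estimate I would decompose $f/\fbest = (f/\fbest - f_1/\fbest) + f_1/\fbest$ and use the pointwise triangle inequality together with the subadditivity and monotonicity of $t \mapsto t^{\alpha_0}$ (valid because $0 < \alpha_0 < 1$) to split $\E_0(\int_{B(f_1,r)} f/\fbest\, d\nu)^{\alpha_0}$ into a \emph{perturbation} term and an \emph{anchor} term. For the perturbation term I would apply Jensen's inequality (concavity of $t \mapsto t^{\alpha_0}$) and Fubini to move $\E_0$ inside the $\nu$-integral, then bound $\E_0|f/\fbest - f_1/\fbest| \leq \eta(d(f,f_1)) \leq \eta(r)$ uniformly on the ball using Assumption 4 and the monotonicity of $\eta$; this term is then at most $\eta(r)^{\alpha_0}\nu(B(f_1,r))^{\alpha_0}$. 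The anchor term is exactly $\E_0(f_1/\fbest)^{\alpha_0}\,\nu(B(f_1,r))^{\alpha_0} \leq e^{-\delta}\nu(B(f_1,r))^{\alpha_0}$ by the observation above. Adding the two shows the $n=1$ bound holds with constant $\eta(r)^{\alpha_0} + e^{-\delta}$, so choosing $r = \eta^{-1}((e^{-\delta/2} - e^{-\delta})^{1/\alpha_0})$ --- which is strictly positive and depends only on $\delta$, $\alpha_0$, and $\eta$, and in particular not on $f_1$ --- brings this constant down to $e^{-\delta/2}$.

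To pass to general $n$ I would reproduce the induction of Proposition~\ref{prop.convexA} on the ball-restricted mixture. Factoring the $n$-fold integrand as a product of a conditional term $f_\nu(y_1 \mid y_{2:n})/\fbest(y_1)$ and the lower-order marginal, the crucial observation is that the conditional law of $f$ given the remaining coordinates, proportional to $f(y_2)\cdots f(y_n)\, d\nu(f)$ restricted to the ball, is again a probability measure supported on $B(f_1,r)$. Applying the $n=1$ bound to this conditional measure therefore yields a clean factor $e^{-\delta/2}$ with no accompanying mass term (its total mass on the ball being one), while the $n=1$ bound applied to the outermost marginal supplies the single $\nu(B(f_1,r))^{\alpha_0}$. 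Iterating over the $n$ coordinates, each contributes one factor $e^{-\delta/2}$ and only the outermost marginal carries the mass factor, which gives $e^{-n\delta/2}\,\nu(B(f_1,r))^{\alpha_0}$; specializing to $\nu = \Pi$ is precisely the claim.

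I expect the main obstacle to be the bookkeeping in this induction rather than any single estimate: one must check that the conditional measures remain probability measures concentrated on $B(f_1,r)$ at every stage, and track carefully that the factor $\nu(B(f_1,r))^{\alpha_0}$ is picked up exactly once (from the final marginal) and does not compound across the $n$ conditioning steps. Once that is in place, the $n=1$ estimate and the choice of $r$ are routine.
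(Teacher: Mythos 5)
Your proposal is correct and follows essentially the same route as the paper's own proof: the identical perturbation-plus-anchor decomposition using subadditivity of $t\mapsto t^{\alpha_0}$, Jensen's inequality with Assumption 4 for the perturbation term, the contrapositive of Assumption 2c for the anchor term, the very same radius $r=\eta^{-1}\bigl((e^{-\delta/2}-e^{-\delta})^{1/\alpha_0}\bigr)$, and the same conditional-times-marginal induction as in Proposition~\ref{prop.convexA}, including the key observation that the conditional mixing measures are probability measures supported on the ball, so the mass factor $\nu(B(f_1,r))^{\alpha_0}$ is collected exactly once from the outermost marginal. There is no gap; your argument matches the paper's in both structure and detail.
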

To ensure that the total posterior probability of $U^c$ goes to zero,
we need to be able to cover it with sets of the form $B
(f_1,r)$ that satisfy a prior-summability assumption as in \cite
{DW2013} or metric entropy assumption as in \cite{Kleijn_van2006}. The
following theorem is an immediate consequence of
Lemma~\ref{lemnew4}.\vspace*{6pt}
\begin{theorem}
\label{DWKVequi}
Let $U^{c}=\{f\in\FF_0: d(f,\fbest)\geq\epsilon\}$. 
Suppose Assumptions 1, 2c and 4 hold. Let $\alpha_0$ be as in
Assumption 2c and $\eta(\cdot)$ be as in Assumption 4. Suppose for
any given $r>0$, $r_{\alpha_0}=\eta^{-1}(r^{\frac{1}{\alpha_0}})$
and $\{B(f_j,r_{\alpha_0}), \ j\geq1\}$ is an open cover
of $U^{c}$ such that one of the following two conditions (a) or (b) holds:
\begin{itemize}
\item[] (a) $\sum_{j}\Pi( B(f_j, r_{\alpha_0})
)^{\alpha_0}<\infty$.
\item[] (b) Assumption 3 holds.
\end{itemize}
Then, $\Pi(U^{c} |Y_{1:n})\rightarrow 0 \quad \prob_0\mbox{-a.s\xch{.}{..}}$
\begin{proof} If condition (a) holds, then since $0<\alpha_0<1$, we get
\begin{align*}
\prob_0 \left( \int_{U^c} \fn/\fbestn d\Pi > e^{-n\beta} \right)
& \leq e^{n\beta}\sum_{j\geq1}\E_0\left( \int_{B(f_j, r_{\alpha
_0})} \fn/\fbestn d\Pi\right)^{\alpha_0}\\
& \leq e^{n\beta}\cdot e^{-n\delta/2}\cdot\sum_{j\geq1} \Pi
(B\left(f_1,r\right))^{\alpha_0}.
\end{align*}
A suitable $\beta$ and Borel--Cantelli Lemma give that for $\beta
_0<\beta$,
$e^{n\beta_0}\int_{U^c} \fn/\fbestn d\Pi\rightarrow\ 0\  \  \prob
_0\mbox{-a.s\xch{.}{..}}$ This, along with Proposition~\ref{post.denom}, gives
$\Pi(U^{c} |Y_{1:n})\rightarrow\ 0 \ \ \prob_0\mbox{-a.s\xch{.}{..}}$ \\
If condition (b) holds then the proof is along the same lines as for
Theorem~\ref{dconsis}.
\end{proof}
\end{theorem}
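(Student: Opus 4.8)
The plan is to push everything through the denominator bound already established in Proposition~\ref{post.denom} and then feed the balls produced by Lemma~\ref{lemnew4} into a covering estimate. By Proposition~\ref{post.denom} and the reduction recorded in~\eqref{equn2}, it suffices to exhibit some $\beta_0>0$ for which $e^{n\beta_0}\int_{U^c}\fn/\fbestn\,d\Pi\to 0$ $\prob_0$-a.s.; the normalizing denominator is then controlled for free. So in both cases I only have to bound the numerator $\int_{U^c}\fn/\fbestn\,d\Pi$.

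For case (a) I would cover $U^c$ by the balls $B(f_j,r_{\alpha_0})$, each of which has exactly the radius for which Lemma~\ref{lemnew4} gives $\E_0(\int_{B(f_j,r_{\alpha_0})}\fn/\fbestn\,d\Pi)^{\alpha_0}\le e^{-n\delta/2}\,\Pi(B(f_j,r_{\alpha_0}))^{\alpha_0}$. Writing $T_j=\int_{B(f_j,r_{\alpha_0})}\fn/\fbestn\,d\Pi$ and using $\int_{U^c}\fn/\fbestn\,d\Pi\le\sum_j T_j$ together with $\min(T_j,1)\le T_j^{\alpha_0}$ (the fixed-$\alpha_0$ version of the argument in Lemma~\ref{lem2new}), I obtain $\prob_0(\int_{U^c}\fn/\fbestn\,d\Pi>e^{-n\beta})\le e^{n\beta}\sum_j\E_0 T_j^{\alpha_0}\le e^{n\beta}e^{-n\delta/2}\sum_j\Pi(B(f_j,r_{\alpha_0}))^{\alpha_0}$. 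The prior-summability assumed in (a) collapses the sum to a finite constant, so the bound is $O(e^{-n(\delta/2-\beta)})$; choosing any $\beta\in(0,\delta/2)$ makes it summable in $n$, and Borel--Cantelli yields $e^{n\beta_0}\int_{U^c}\fn/\fbestn\,d\Pi\to 0$ a.s.\ for any $\beta_0<\beta$.

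For case (b) I would split $U^c=(U^c\cap V_n)\cup(U^c\cap W_n)$ using $\FF_0\subseteq V_n\cup W_n$ from Assumption~3. On $V_n$ the estimate above applies verbatim, except that the cover now contains only $J_n\le a n^r$ balls and $\Pi(B)^{\alpha_0}\le 1$, giving the bound $e^{n\beta}a n^r e^{-n\delta/2}$; the polynomial factor is harmless, so for $\beta<\delta/2$ this is again summable and Borel--Cantelli plus Proposition~\ref{post.denom} give $\Pi(U^c\cap V_n\mid Y_{1:n})\to 0$ a.s. On $W_n$ I would reuse the tail argument from Theorem~\ref{dconsis}: the denominator estimate~\eqref{Wndenom} (the analogue of Proposition~\ref{post.denom} relative to $\ftruen$) together with Markov's inequality and $\E_0(\fn/\ftruen)=1$ give $\prob_0(\int_{W_n}\fn/\ftruen\,d\Pi>e^{-n\Delta/2})\le e^{n\Delta/2}\Pi(W_n)\le e^{-n\Delta/2}$, so the gap $\Delta>2\E_0\log(\ftrue/\fbest)$ forces $\Pi(W_n\mid Y_{1:n})\to 0$ a.s. Adding the two pieces gives $\Pi(U^c\mid Y_{1:n})\to 0$.

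The substantive work has already been absorbed into Lemma~\ref{lemnew4}, whose induction replaces the convexity step of Proposition~\ref{prop.convexA} by the continuity Assumption~4; granting that lemma, the present theorem is essentially bookkeeping. The only points that require care are the exponent alignment---the covering buys only the rate $\delta/2$, so one must take $\beta$ strictly below $\delta/2$ and then $\beta_0<\beta$ for Proposition~\ref{post.denom} to apply---and, in case (b), the $W_n$ tail, which is delicate because it is measured against $\ftruen$ rather than $\fbestn$; this is exactly why the separate denominator estimate~\eqref{Wndenom} and the Kullback--Leibler gap $\Delta>2\E_0\log(\ftrue/\fbest)$ are needed.
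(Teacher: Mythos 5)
Your proof is correct and follows essentially the same route as the paper's: in case (a) you combine the ball estimate of Lemma~\ref{lemnew4} with the Markov/Borel--Cantelli covering argument and Proposition~\ref{post.denom}, exactly as the paper does. For case (b) the paper merely says ``along the same lines as Theorem~\ref{dconsis}''; your explicit $V_n$/$W_n$ split, the polynomial-count bound $e^{n\beta}a n^r e^{-n\delta/2}$, and the $W_n$ tail via \eqref{Wndenom} and $\E_0(\fn/\ftruen)=1$ are precisely what that reference unpacks to.
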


\begin{remark}
As noted earlier, Assumption 4 automatically holds for $d=L_1(\mu_0)$.
In that case, the function $\eta(\cdot)$ is just the identity
function, and the result based on condition (a) of Theorem~\ref
{DWKVequi} is analogous to Corollary 1 of \cite{DW2013}.
\end{remark}

\begin{remark}
\label{R:parametric}
Theorem~\ref{DWKVequi} can be easily applied to $i.i.d.$ parametric
models, i.e., when $\FF_0=\{f_{\theta}: \ \theta\in\Theta
\}$. Let $\fbest=f_{\theta^*}$ for some $\theta^*\in\Theta$
be the minimizer of Kullback--Leibler divergence from $\ftrue$. It is
easy to see that Assumption 1 is ensured as long as the prior $\Pi$
assigns a positive probability to every open $d$-neighborhood of
$\theta^*$ and $\E_0\log\frac{f_{\theta^{*}}}{f_{\theta}}$ is
continuous in $\theta$. Further, note that Theorems~\ref
{T:ass2c_compact} and \ref{assn2prime} in Section~\ref{S:L1weak}
provide sufficient conditions for Assumption 2c to hold with respect to
$L_1(\mu_0)$. However, to apply the results for the parametric model,
we need the assumption to hold with respect to the metric $d$. This can
be easily ensured if in addition to conditions of Theorems~\ref
{T:ass2c_compact} or \ref{assn2prime}, it can be verified that for
some monotonically increasing function $\zeta(\cdot)$, with $\zeta(0)=0$:
\[
\int|f_{\theta}-f_{\theta^*}|d\mu_0 \geq \zeta(d(\theta, \theta
^*)) ,\ \forall\ \theta\in\Theta.
\]
Then, by defining the metric on $\FF_0$ as $d(f_{\theta_1}, f_{\theta
_2}):= d(\theta_1,\theta_2)$, and using Assumption 4, Theorem~\ref
{DWKVequi} can be applied. We provide examples of this approach to
parametric models in Section~\ref{S:examples}.
\end{remark}
\begin{remark}
Assumption 4 is a continuity condition on $\E_0(\frac{f}{\fbest
})$. It is possible to work with an an alternative condition
that assumes continuity of $\E_0(\frac{f}{f_1})$ for any
$f_1 \in\FF_0$. In particular, if $f_1 \in U^{c}$, by Assumption 2c,
$\E_0( \frac{f_1}{\fbest})^{\alpha_0}<e^{-\delta}$.
Then, the conclusion analogous to that of Lemma~\ref{lemnew4}, which
is crucial for Theorem~\ref{DWKVequi}, can be obtained by defining an
open set $B_1:= \{\theta\in\Theta: \ E[\frac
{f_{1}}{\fbest} ]< e^{\frac{\delta}{2}} \}$. Then for
$\alpha=\frac{\alpha_0}{2}$ and any probability measure $\nu(\cdot
)$ on $B_1$, it can be shown (by using Cauchy--Schwartz and Jensen's
inequality) that
\begin{eqnarray*}
\E_0\left[ \left(\int_{B_1}\frac{f}{\fbest} d\nu\right)^{\alpha
}\right]&=&E\left[ \left(\frac{f_1}{\fbest} \right)^{\alpha
}\cdot\left(\int_{B_1}\frac{f}{f_{1}} d\nu\right)^{\alpha}\right
]\\
&\leq& \left(\E_0\left[ \left(\frac{f_1}{\fbest}\right
)^{2\alpha}\right]\right)^{\frac{1}{2}} \cdot\left(\int_{B_1}\E
_0\left[ \frac{f}{f_1}\right]d\nu\right)^{\alpha} < e^{-\alpha
\frac{\delta}{2}}.
\end{eqnarray*}
Posterior consistency can then be derived if $U^c$ can be covered by
suitably many sets of the form $B_1$, e.g., when $\FF_0$ is compact.
We work with such an assumption in Section~\ref{S:inid} (Assumption D)
while extending results to the $i.n.i.d.$ case.
\end{remark}

\section{ Weak and $L_1$ consistency}
\label{S:L1weak}
Assumptions 2a, 2b and 2c are crucial for Theorems~\ref{KL.compact},
\ref{dconsis} and \ref{DWKVequi}, respectively. These, we feel, are
in general hard to verify. Here, we focus on specific topologies and
discuss cases where these assumptions hold.

Recall $d\mu_0 = (f_0/ \fbest) \,d\mu$. Our interest is in two
topologies on $\Fconv$. First, the weak topology on $ L_1 (\mu_0)$
induced by $ L_\infty(\mu_0)$. The basic open neighborhoods of
$\fbest$ here are finite intersections of sets of the form
\[
\left\{ g \in L_1(\mu_0): |\int\phi_k g d\mu_0 - \int\phi\fbest
d\mu_0| < \epsilon_k , \qquad\phi_k\in L_\infty(\mu_0) \right\}.
\]
We will refer to this as the $\mu_0$-weak topology. The other topology
is the $ L_1$ topology which yields neighborhoods of the form $\{
g: \int|g - \fbest| d\mu_0 <\epsilon\}$. Of interest are
also the usual weak and total variation topologies on densities. These
correspond to $\mu$-weak topology and $L_1(\mu)$ topology. In the
context of consistency, our interest is in the concentration of the
posterior in neighborhoods of $\fbest$. We formally define
\begin{defn} The sequence of posterior distributions $\{ \Pi( \cdot|
Y_1, Y_2, \ldots, Y_n)\}_{n \geq1} $ is said to be $ \mu_0$-weakly
consistent if,
for any $\mu_0$-weak neighborhood $U$ of $\fbest$,
\[
\Pi(U | Y_{1:n}) \rightarrow1 \qquad\prob_0\mbox{-a.s\xch{.}{..}}
\]
\end{defn}
We will now show in the theorem below that when $\FF_0$ itself is
convex, Assumptions~2a, 2b and 2c are ensured with respect to weak and
$L_1(\mu_0)$ topologies.\vspace*{6pt}

\begin{theorem} \label{dinKV.convex}
If $\FF_0$ is convex then Assumptions 2a, 2b and 2c hold both with
respect to the $L_1(\mu_0)$ topology and the $\mu_0$-weak topology.
\begin{proof} First, using the Cauchy--Schwartz inequality, we get
\begin{align*}
\int| \fbest- f| d\mu_0 &= \int\left| 1- \frac{f}{\fbest}\right
| \ftrue d\mu= \int\left|\left(1-\sqrt{\frac{f}{\fbest}}\right
)\right|\cdot\left(1+\sqrt{\frac{f}{\fbest}}\right) \ftrue d\mu
\\
&\leq\left( \int\left|1-\sqrt{\frac{f}{\fbest}}\right|^2 \ftrue
d\mu\right)^{\frac{1}{2}}\cdot\left(\int \left(1+\sqrt{\frac
{f}{\fbest}}\right)^2 \ftrue d\mu\right)^{\frac{1}{2}}.
\end{align*}

Since $\FF_0$ is convex, by Lemma 2.3 of \cite{Kleijn_van2006}, $ \E
_0 \frac{f}{\fbest} \leq1$. Hence, the second term in the above
inequality is bounded because
\[
\E_0\left(1+\sqrt{\frac{f}{\fbest}}\right)^2 = 2\left(1+ \E
_0\frac{f}{\fbest}\right) \leq4.
\]
Similarly, for the first term,
\begin{align*}
\E_0\left(1-\sqrt{\frac{f}{\fbest}}\right)^2 &= \E_0\left
(1-\sqrt{\frac{f}{\fbest}}\right)^2 = 1+ \E_0\frac{f}{\fbest} -
2\E_0 \sqrt{\frac{f}{\fbest}} \\
& \leq2\left(1 - \E_0 \sqrt{\frac{f}{\fbest}}\right) = \frac
{1-\hstar_{\frac{1}{2}}(\ftrue,f)}{\frac{1}{2}} \leq K^*( \ftrue,f).
\end{align*}

Thus using Lemma~\ref{KVlem6}, we get
\[
\int| \fbest- f| d\mu_0 \leq2 \cdot\sqrt{\frac{1-\hstar_\alpha
(\ftrue,f)}{\alpha} }
\mbox{(with $\alpha =0.5$)} \leq2 \sqrt{ \Kstar(\ftrue,f)}.
\]
The last inequality ensures that Assumption 2a, 2b and 2c hold with
respect to $L_1(\mu_0)$. Since every weak neighborhood contains an
$L_1$-neighborhood, assumptions hold with respect to the $\mu_0$-weak
topology as well.
\end{proof}
\end{theorem}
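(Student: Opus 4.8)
The plan is to reduce all three assumptions to a single analytic estimate that controls the $L_1(\mu_0)$ distance from $\fbest$ by the three divergences in play. Since $d\mu_0 = (\ftrue/\fbest)\,d\mu$, one has $\int|\fbest - f|\,d\mu_0 = \E_0|1 - f/\fbest|$, so the goal is the chain
\[
\int|\fbest - f|\,d\mu_0 \;\le\; 2\sqrt{\frac{1 - \hstar_{1/2}(\ftrue,f)}{1/2}} \;\le\; 2\sqrt{\Kstar(\ftrue,f)},\qquad f\in\FF_0.
\]
Granting this, each assumption follows by calibrating $\delta$ to $\epsilon$. For 2a, if $\Kstar(\ftrue,f)<\delta$ then the distance is below $2\sqrt\delta$, so taking $\delta=\epsilon^2/4$ places the set $\{\Kstar<\delta\}$ inside $U_\epsilon$. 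For 2b and 2c, note that $\inf_{0\le\alpha\le1}\hstar_\alpha \le \hstar_{1/2}$, so either hypothesis forces $\hstar_{1/2}(\ftrue,f)>e^{-\delta}$, whence $1-\hstar_{1/2}<1-e^{-\delta}$ and the distance is below $2\sqrt{2(1-e^{-\delta})}$, which is $<\epsilon$ once $\delta$ is small. Thus the same inequality yields all three, with the fixed choice $\alpha_0=1/2$ in 2c.

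The first step is the master inequality itself. I would factor $|1-f/\fbest| = |1-\sqrt{f/\fbest}|\,(1+\sqrt{f/\fbest})$ and apply Cauchy--Schwartz against $\ftrue\,d\mu$, bounding $\int|\fbest-f|\,d\mu_0$ by the product of $\big(\E_0(1-\sqrt{f/\fbest})^2\big)^{1/2}$ and $\big(\E_0(1+\sqrt{f/\fbest})^2\big)^{1/2}$. Convexity of $\FF_0$ enters here through $\E_0(f/\fbest)\le 1$ (Lemma 2.3 of \cite{Kleijn_van2006}): it yields $\E_0(1+\sqrt{f/\fbest})^2 \le 2(1+\E_0(f/\fbest))\le 4$ and also $\E_0(1-\sqrt{f/\fbest})^2 \le 2(1-\E_0\sqrt{f/\fbest}) = 2(1-\hstar_{1/2}(\ftrue,f))$. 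Multiplying the two factors gives the first inequality of the chain, and the second follows from the standard R\'enyi-to-Kullback--Leibler comparison $(1-\hstar_\alpha)/\alpha \le \Kstar$ for $0<\alpha<1$, specialized at $\alpha=1/2$.

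It then remains to reconcile domains and to pass to the weak topology. Since $\FF_0$ is convex, $\Fconv=\FF_0$, so the neighborhoods stated over $\Fconv$ in 2a and 2b and over $\FF_0$ in 2c coincide, and the reductions above apply verbatim. For the $\mu_0$-weak topology I would use only the elementary fact that every basic $\mu_0$-weak neighborhood of $\fbest$ contains an $L_1(\mu_0)$-ball; a divergence set already trapped inside that $L_1(\mu_0)$-ball is then trapped inside the weak neighborhood, so 2a, 2b, and 2c transfer immediately.

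The main obstacle is the single estimate of the second paragraph, and within it the boundedness of $\E_0(1+\sqrt{f/\fbest})^2$: this is precisely what convexity buys via $\E_0(f/\fbest)\le 1$, and without it the Cauchy--Schwartz split has no uniform second factor and the statement would fail. Everything else---the square-root factorization, the Hellinger identity, the R\'enyi--KL comparison, and the choice of $\delta(\epsilon)$---is routine bookkeeping.
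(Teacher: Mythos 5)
Your proposal is correct and follows essentially the same route as the paper: the identical Cauchy--Schwartz factorization $|1-f/\fbest| = |1-\sqrt{f/\fbest}|\,(1+\sqrt{f/\fbest})$, the bound $\E_0(f/\fbest)\leq 1$ from Lemma 2.3 of \cite{Kleijn_van2006}, the comparison $(1-\hstar_{1/2})/(1/2)\leq \Kstar$ via Lemma~\ref{KVlem6}, and the observation that weak neighborhoods contain $L_1(\mu_0)$-balls. The only difference is that you spell out the $\delta(\epsilon)$ calibration for each of Assumptions 2a, 2b, and 2c (and the reconciliation $\Fconv=\FF_0$), steps the paper leaves implicit in the phrase ``the last inequality ensures.''
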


\begin{remark}[{\bf$\mu_0$-Weak Consistency}]\label{weakconsis} By
Theorem~\ref{thm.convexA}, Assumption 2a along with Assumption 1
ensures $\mu_0$-weak consistency. This is because the complement of a
weak neighborhood is a finite union of convex sets. Further, by Theorem
\ref{dinKV.convex}, if $\FF_0$ is convex, Assumption 1 is enough to
ensure $\mu_0$-weak consistency.
\end{remark}

When $\FF_0$ is not convex, Assumptions 2a and 2b are not easy to
verify. In that case, it may be easier to work with Assumption 2c.
Next, we derive two results with sufficient conditions for Assumption
2c to hold with respect to the $L_1(\mu_0)$ metric. The first simpler
result below is obtained when $\FF_0$ is $L_1(\mu_0)$
compact.\vspace*{6pt}
\begin{theorem}
\label{T:ass2c_compact}
If $\FF_0$ is $L_1(\mu_0)$-compact, then Assumption 2c holds with
respect to $d=L_1(\mu_0)$.\vspace*{-6pt}
\begin{proof}
Suppose $f_1\in\FF_0$ is such that $d(f_1,\fbest)\geq\epsilon$.
Since $\fbest$ is assumed to be unique, $\exists\ \eta>0$ such that
$\Kstar(\ftrue,f_1)>\eta$ and further by Lemma~\ref{KVlem6},
$\exists\ 0<\alpha<1$ such that $\hstar_{\alpha}(\ftrue,
f_1)<1-\alpha\eta< e^{-\alpha\eta}$. Here $\alpha$ and $\eta$ may
depend on $f_1$. Now, let $B_{f_1}=\{f\in\FF_0: \ d(f,f_1)<r_{\alpha
}\}$, where $r_{\alpha}\leq(e^{-\alpha\eta/2}-e^{-\alpha\eta}
)^{\frac{1}{\alpha}}$. For $f\in B_{f_1}$, 
since $0<\alpha<1$,
\begin{eqnarray*}
\hstar_{\alpha}(\ftrue,f) &\leq& \hstar_{\alpha}(\ftrue,f_1) +
\int\left|\left( \frac{f}{\fbest}\right)^{\alpha}- \left( \frac
{f_1}{\fbest}\right)^{\alpha}\right|\ftrue d\mu\\
&\leq& \hstar_{\alpha}(\ftrue,f_1) + \int\left| \frac{f}{\fbest
}- \frac{f_1}{\fbest}\right|^{\alpha}\ftrue d\mu\\
&\leq& \hstar_{\alpha}(\ftrue,f_1) + \left( \int\left|
f-f_1\right| d\mu_0 \right)^\alpha\leq e^{-\delta}, \mbox{ where
} \delta=\alpha\eta/2.
\end{eqnarray*}
The last step uses Jensen's inequality. We have essentially shown that
if $d(f_1,\fbest)\geq\epsilon$ there is an open $L_{1}(\mu_0)$-ball
$B_{f_1}$ around $f_1$ and $\exists$ $0<\alpha<1$, $\delta>0$ such
that $\hstar_{\alpha}(\ftrue,f)\leq e^{-\delta}$, for all $f\in
B_{f_1}$. Then, as noted in proof of Proposition~\ref{KVKLrelation} in
Appendix~\ref{proofs}, we would also have $\hstar_{\alpha'}(\ftrue
,f)<e^{-\delta}$ for all $\alpha'<\alpha$. Since $\FF_0$ is
$L_1(\mu_0)$-compact, $\{f: d(f,\fbest)\geq\epsilon\}$ can be
covered by finitely many such balls $B_{f_1}, B_{f_2}, \dots,
B_{f_k}$, thus obtaining an $\alpha$ and $\delta$ corresponding to
each ball. The result is obtained by choosing the minimum of these
finitely many $\alpha$'s and $\delta$'s and noting that $\{f\in\FF
_0: d(f,\fbest)\geq\epsilon\} \subseteq\cup_{i=1}^{k}B_{f_i}
\subseteq\{f\in\FF_0 : \hstar_{\alpha}(\ftrue, \fbest) \leq
e^{-\delta}\}$.\vspace*{-6pt}
\end{proof}
\end{theorem}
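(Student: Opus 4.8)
The plan is to unpack Assumption 2c into its contrapositive form and then marry a pointwise estimate to a compactness argument. Fix $\epsilon>0$ and write $U^c=\{f\in\FF_0:d(f,\fbest)\geq\epsilon\}$. What I must produce is a \emph{single} pair $\alpha_0\in(0,1)$ and $\delta>0$ with $\hstar_{\alpha_0}(\ftrue,f)\leq e^{-\delta}$ for every $f\in U^c$; this is precisely the inclusion $U^c\subseteq\{f:\hstar_{\alpha_0}(\ftrue,f)\leq e^{-\delta}\}$ demanded by the assumption. I would first establish such a bound \emph{locally}, around an arbitrary point of $U^c$, and then invoke $L_1(\mu_0)$-compactness to pass from local to uniform.

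For the local step, fix $f_1\in U^c$. Since $\fbest$ is the unique Kullback--Leibler minimizer and $d(f_1,\fbest)\geq\epsilon>0$, I have $\Kstar(\ftrue,f_1)>0$; call this value $\eta$ (it may depend on $f_1$). Using Lemma~\ref{KVlem6}, which controls $\hstar_\alpha$ through $\Kstar$ via the quantity $(1-\hstar_\alpha)/\alpha$ rising to $\Kstar$ as $\alpha\downarrow0$, I can select $\alpha=\alpha(f_1)\in(0,1)$ with $\hstar_\alpha(\ftrue,f_1)<1-\alpha\eta<e^{-\alpha\eta}$. To spread this bound to an $L_1(\mu_0)$-ball about $f_1$, I would estimate, for $0<\alpha<1$,
\[
\hstar_\alpha(\ftrue,f)\leq\hstar_\alpha(\ftrue,f_1)+\E_0\Bigl|(f/\fbest)^\alpha-(f_1/\fbest)^\alpha\Bigr|,
\]
and bound the second term by the subadditivity inequality $|a^\alpha-b^\alpha|\leq|a-b|^\alpha$ followed by Jensen's inequality (concavity of $t\mapsto t^\alpha$), giving $\E_0|f/\fbest-f_1/\fbest|^\alpha\leq\bigl(\int|f-f_1|\,d\mu_0\bigr)^\alpha=d(f,f_1)^\alpha$. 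Choosing a radius $r_\alpha$ with $r_\alpha^\alpha\leq e^{-\alpha\eta/2}-e^{-\alpha\eta}$ then forces $\hstar_\alpha(\ftrue,f)\leq e^{-\alpha\eta/2}=:e^{-\delta}$ throughout $B_{f_1}=\{f:d(f,f_1)<r_\alpha\}$, with $\delta=\alpha\eta/2$.

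For the global step, $U^c$ is closed in the $L_1(\mu_0)$-compact $\FF_0$, hence itself compact. The balls $\{B_{f_1}:f_1\in U^c\}$ cover $U^c$, so I extract a finite subcover $B_{f_1},\dots,B_{f_k}$, each carrying its own exponent $\alpha_i$ and bound $\delta_i$. It remains to reconcile these finitely many exponents into one: setting $\alpha_0=\min_i\alpha_i>0$, I would transfer each bound $\hstar_{\alpha_i}(\ftrue,f)\leq e^{-\delta_i}$ down to level $\alpha_0$ using the monotonicity/convexity property of $\alpha\mapsto\hstar_\alpha$ recorded in the proof of Proposition~\ref{KVKLrelation} (anchored at $\hstar_0\equiv1$), arriving at a common $\delta=\min_i\delta_i'>0$ with $\hstar_{\alpha_0}(\ftrue,f)\leq e^{-\delta}$ on every $B_{f_i}$, and hence on all of $U^c$.

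I expect the reconciliation of exponents in the global step to be the delicate point: the pointwise construction yields an $\alpha$ depending on $f_1$ (through $\eta$), and a bound established at level $\alpha_i$ does not transfer verbatim to the smaller common level $\alpha_0$. The clean way around this is the convexity of $\alpha\mapsto\log\hstar_\alpha(\ftrue,f)$ together with $\log\hstar_0=0$, which lets a strictly negative value at $\alpha_i$ be propagated to $\alpha_0\leq\alpha_i$ with a controlled, still strictly negative, exponent. Compactness is exactly what keeps this to a \emph{finite} reconciliation, so that the resulting $\alpha_0$ and $\delta$ remain strictly positive.
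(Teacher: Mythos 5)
Your proposal is correct and follows essentially the same route as the paper's own proof: a local bound via Lemma~\ref{KVlem6} and the inequality $|a^\alpha-b^\alpha|\leq|a-b|^\alpha$ plus Jensen to spread $\hstar_\alpha(\ftrue,f_1)<e^{-\alpha\eta}$ over an $L_1(\mu_0)$-ball of radius $r_\alpha$ with $r_\alpha^\alpha\leq e^{-\alpha\eta/2}-e^{-\alpha\eta}$, followed by a finite subcover and reconciliation of the finitely many exponents through the H\"older-type monotonicity recorded in the proof of Proposition~\ref{KVKLrelation}. If anything, your explicit warning that a bound at level $\alpha_i$ transfers to the common level $\alpha_0\leq\alpha_i$ only with a degraded (but still strictly positive) exponent is slightly more careful than the paper's shorthand ``$\hstar_{\alpha'}(\ftrue,f)<e^{-\delta}$ for all $\alpha'<\alpha$,'' which strictly speaking should read $e^{-\delta\alpha'/\alpha}$.
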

The following theorem and the corollary give sufficient conditions for
Assumption 2c to hold with respect to $L_1(\mu_0)$, when $\FF_0$ is
neither convex nor compact.\vspace*{6pt}
\begin{theorem}
\label{assn2prime}
If $\exists\ 0<\alpha_0<1$ such that $\sup_{f\in\FF_0}\E_0
(\frac{f}{\fbest})^{\alpha_0} \leq1$ and suppose\break $\sup
_{f\in\FF_0} \E_0(\frac{f}{\fbest})^2 < \infty$. Then Assumption
2c holds with respect to $d=L_1(\mu_0)$.\vspace*{-6pt}
\begin{proof}
Without loss of generality, assume $\alpha_0=\frac{1}{2^{K-1}}$ for
some $K>1$. Define $a:= (\frac{f}{\fbest})^{\frac
{1}{2^K}}$. Then
\begin{eqnarray*}
\E_0\left| \frac{f}{\fbest}-1\right| &=& \E_0\left| a^{2^k}-1
\right| =\E_0 \left[\left|a-1\right|\cdot\left|1+a^2+a^3+\cdots
+a^{2^k-1} \right|\right]\\
&\leq& \E_0 \left(\left|a-1\right|^2 \right)^\frac{1}{2}\cdot
\left( \E_0\left|1+a^2+a^3+\cdots+a^{2^k-1} \right|^2 \right
)^\frac{1}{2}.
\end{eqnarray*}\vadjust{\eject}
For the first term on the right-hand side of the above inequality, we have
\begin{eqnarray*}
&&\E_0\left| a-1\right|^2 = \E_0\left( \frac{f}{\fbest}\right
)^{\frac{1}{2^{k-1}}} +1 -2 \E_0\left( \frac{f}{\fbest}\right
)^{\frac{1}{2^{k}}}\leq2 \left(1-\E_0\left( \frac{f}{\fbest
}\right)^{\frac{1}{2^{k}}}\right).
\end{eqnarray*}
Note that every term within the expansion $\E_0|1+a^2+a^3+\cdots
+a^{2^k-1} |^2$ is of the form $\E_0a^l= \E_0( \frac
{f}{\fbest})^\frac{l}{2^{k}}$, where $l\leq2^{k+1}-2$. In
particular, the second term is bounded by some constant multiple of
$\sqrt{\sup_{f\in\FF_0} \E_0 (\frac{f}{\fbest}
)^2}$. Hence the result follows.
\end{proof}
\end{theorem}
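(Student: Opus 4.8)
The plan is to control the $L_1(\mu_0)$-distance $d(f,\fbest)=\E_0|f/\fbest-1|$ from above by a quantity governed by $1-\hstar_{\alpha}(\ftrue,f)$ for a suitable exponent $\alpha$, so that a large distance forces $\hstar_{\alpha}(\ftrue,f)$ to stay bounded away from $1$. Verifying Assumption 2c then reduces to its contrapositive: for each $\eps>0$ I must exhibit $\alpha\in(0,1)$ and $\delta>0$ with $d(f,\fbest)\ge\eps \Rightarrow \hstar_{\alpha}(\ftrue,f)\le e^{-\delta}$. Note the exponent $\alpha$ delivered for Assumption 2c need not be the $\alpha_0$ of the first hypothesis.

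First I would reduce to a dyadic exponent. Since $x\mapsto x^{\alpha'/\alpha_0}$ is concave when $\alpha'<\alpha_0$, Jensen's inequality gives $\E_0(f/\fbest)^{\alpha'}\le(\E_0(f/\fbest)^{\alpha_0})^{\alpha'/\alpha_0}\le1$, so the first hypothesis persists for every smaller exponent and I may assume $\alpha_0=2^{-(K-1)}$ for an integer $K>1$. Setting $a=(f/\fbest)^{1/2^K}$, one has $f/\fbest=a^{2^K}$ and $(f/\fbest)^{\alpha_0}=a^2$, and the factorization $a^{2^K}-1=(a-1)\sum_{l=0}^{2^K-1}a^l$ combined with Cauchy--Schwarz yields
\[
\E_0\Bigl|\frac{f}{\fbest}-1\Bigr|\le\Bigl(\E_0|a-1|^2\Bigr)^{1/2}\Bigl(\E_0\Bigl(\sum_{l=0}^{2^K-1}a^l\Bigr)^2\Bigr)^{1/2}.
\]

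The two hypotheses then govern the two factors separately. For the first, $\E_0|a-1|^2=\E_0 a^2+1-2\E_0 a\le2\bigl(1-\E_0 a\bigr)=2\bigl(1-\hstar_{1/2^K}(\ftrue,f)\bigr)$, where the cancellation hinges on $\E_0 a^2=\E_0(f/\fbest)^{\alpha_0}\le1$. For the second factor, expanding the square produces only terms $\E_0 a^l=\E_0(f/\fbest)^{l/2^K}$ with $l\le2^{K+1}-2$, so every exponent $l/2^K$ is strictly below $2$; by Lyapunov's inequality under the probability measure $\E_0$, each such term is at most $(\E_0(f/\fbest)^2)^{(l/2^K)/2}$ and is therefore bounded uniformly in $f$ by the second hypothesis $\sup_{f}\E_0(f/\fbest)^2<\infty$. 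Combining the two bounds gives $d(f,\fbest)\le C\sqrt{1-\hstar_{1/2^K}(\ftrue,f)}$ with $C$ depending only on $K$ and $\sup_f\E_0(f/\fbest)^2$.

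The conclusion follows by rearrangement: $d(f,\fbest)\ge\eps$ forces $\hstar_{1/2^K}(\ftrue,f)\le1-\eps^2/C^2$, which I take to be $e^{-\delta}$ with $\delta=-\log(1-\eps^2/C^2)>0$ whenever $\eps$ is small enough that the right side is positive; for larger $\eps$ the set $\{d\ge\eps\}$ is empty (as $d\le C$ since $\hstar_{1/2^K}\ge0$), so Assumption 2c holds vacuously. This delivers Assumption 2c with exponent $\alpha_0'=1/2^K$. I expect the main obstacle to be the uniform control of the second Cauchy--Schwarz factor: the essential observation is that after expansion all exponents remain strictly below $2$, so the single moment bound $\sup_f\E_0(f/\fbest)^2<\infty$ suffices via Lyapunov, with no appeal to compactness or convexity of $\FF_0$.
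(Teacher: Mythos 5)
Your proposal is correct and takes essentially the same route as the paper's proof: the same dyadic reduction $a=(f/\fbest)^{1/2^K}$, the same factorization of $a^{2^K}-1$ followed by Cauchy--Schwarz, the same cancellation via $\E_0 a^2=\E_0(f/\fbest)^{\alpha_0}\le 1$ in the first factor, and the same uniform control of the second factor through $\sup_f \E_0(f/\fbest)^2<\infty$. Your extra details --- Jensen's inequality to justify the dyadic ``without loss of generality,'' Lyapunov's inequality for the expanded cross terms, and the explicit contrapositive with the vacuous large-$\eps$ case --- simply make explicit what the paper's proof leaves implicit (and your geometric sum $\sum_{l=0}^{2^K-1}a^l$ corrects a small typo in the paper's displayed factorization).
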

\begin{corollary}
\label{corassn2prime}
If the log-likelihood ratio $\log\frac{f}{\fbest}$ is uniformly
bounded, then Assumption~2c holds with respect to $d=L_1(\mu_0)$.
\begin{proof}
By uniform boundedness, $\exists$ $\alpha$ that does not depend on
$f$ such that $\alpha\cdot| \log\frac{f}{\fbest}| <
\frac{1}{2}$. We note that when $t<1, e^t<\frac{1}{1-t}$. In
particular, for $t<1/2$, $e^t-1<t/(1-t)<2t$. Applying this inequality,
we get
\[
e^{\alpha\cdot \log\frac{f}{\fbest}} -1 < -2 \alpha\cdot\log
\frac{\fbest}{f}.
\]
Therefore, $\E_0(\frac{f}{\fbest})^{\alpha}< 1-2
\alpha\cdot\E_0\log\frac{\fbest}{f}$. Since $ 0\leq2 \alpha
\cdot\E_0\log\frac{\fbest}{f}<1$, we have $\E_0(\frac
{f}{\fbest})^{\alpha}\leq1$. Clearly, uniform boundedness
also ensures that $ \E_0(\frac{f}{\fbest})^2$ is uniformly bounded.
Hence Theorem~\ref{assn2prime} implies that Assumption 2c holds.
\end{proof}
\end{corollary}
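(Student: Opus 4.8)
The plan is to derive the corollary as a special case of Theorem~\ref{assn2prime}, so the task reduces to checking that uniform boundedness of the log-likelihood ratio implies the two hypotheses there: (i) $\sup_{f\in\FF_0}\E_0(f/\fbest)^{\alpha_0}\le 1$ for some $0<\alpha_0<1$, and (ii) $\sup_{f\in\FF_0}\E_0(f/\fbest)^2<\infty$. Write $M$ for the uniform bound, so that $|\log(f/\fbest)|\le M$ for every $f\in\FF_0$ ($\mu$-a.e.). Hypothesis (ii) is then immediate, since $(f/\fbest)^2=e^{2\log(f/\fbest)}\le e^{2M}$ pointwise and hence $\E_0(f/\fbest)^2\le e^{2M}$ uniformly in $f$. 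All the real work lies in producing a single exponent $\alpha_0$ that makes (i) hold simultaneously for every $f\in\FF_0$.

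For (i) I would exploit the defining property of $\fbest$, namely that $\E_0\log(f/\fbest)=-\Kstar(\ftrue,f)\le 0$ because $\fbest$ minimizes the Kullback--Leibler divergence, so the first-order term of $\E_0 e^{t}$ carries the favorable sign. Fixing $\alpha_0<1/(2M)$ forces $t:=\alpha_0\log(f/\fbest)$ to satisfy $|t|<1/2$ uniformly, which invites linearizing the exponential. The elementary inequality $e^{s}\le 1/(1-s)$ for $s<1$ gives, on $0<s<1/2$, the bound $e^{s}-1<s/(1-s)<2s$; applying this with $s=t$ and taking expectations would yield $\E_0(f/\fbest)^{\alpha_0}\le 1-2\alpha_0\Kstar(\ftrue,f)\le 1$, which is exactly (i).

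The step I expect to be the main obstacle is precisely this pointwise linearization, because the bound $e^{t}-1\le 2t$ is genuinely one-sided: it holds where $f\ge\fbest$ (so $t\ge 0$) but reverses where $f<\fbest$ (so $t<0$, where $e^{t}-1>2t$). One therefore cannot integrate $e^{t}\le 1+2t$ over all of $\YY$, and the favorable sign of $\E_0\log(f/\fbest)$ must instead be extracted through a two-sided, second-order estimate such as $e^{t}\le 1+t+t^{2}$ on $|t|\le 1/2$. That route gives $\E_0(f/\fbest)^{\alpha_0}\le 1-\alpha_0\Kstar(\ftrue,f)+\alpha_0^{2}\E_0\bigl(\log(f/\fbest)\bigr)^{2}$, and since $\E_0(\log(f/\fbest))^{2}\le M^{2}$ the right-hand side is at most $1$ provided $\alpha_0 M^{2}\le\Kstar(\ftrue,f)$. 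The difficulty I anticipate is that this comparison need not hold uniformly: a density can make $\Kstar(\ftrue,f)$ arbitrarily small while the dispersion $\E_0(\log(f/\fbest))^{2}$ stays bounded away from zero, so the admissible $\alpha_0$ can be driven to $0$. Securing a single $\alpha_0$ valid across all of $\FF_0$ thus seems to require either extra structure forcing small $\Kstar$ to entail a small spread of the log-ratio, or direct use of the $L_1(\mu_0)$-separation that Assumption~2c encodes; pinning down exactly what is needed for a uniform choice of $\alpha_0$ is where I would concentrate the effort.
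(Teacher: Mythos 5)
Your reduction of the corollary to the two hypotheses of Theorem~\ref{assn2prime}, and your one-line disposal of the second hypothesis via $\E_0(f/\fbest)^2\le e^{2M}$, are exactly what the paper does. For the first hypothesis, the route you sketched and then rejected---fix $\alpha$ with $\alpha M<1/2$, apply $e^t-1<t/(1-t)<2t$ to $t=\alpha\log(f/\fbest)$, and integrate to get $\E_0(f/\fbest)^\alpha\le 1-2\alpha\Kstar(\ftrue,f)\le 1$---is precisely the paper's proof, and your objection to it is correct: the inequality $t/(1-t)<2t$ holds only for $0<t<1/2$, while for $t\in(-1/2,0)$ one has $t/(1-t)>2t$ and indeed $e^t-1>2t$ (e.g.\ $t=-0.4$: $e^{-0.4}-1\approx-0.33>-0.8$). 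So the paper's pointwise bound fails on $\{f<\fbest\}$ and cannot be integrated; the gap you flagged sits in the published argument itself, not in your reading of it.

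Your further diagnosis is also vindicated: the corollary as stated is false, by exactly the mechanism you describe (small $\Kstar$ with second moment of the log-ratio bounded away from zero). Concretely, let $\mu$ be Lebesgue measure on $[0,1]$, $\ftrue\equiv 1$, and $\fbest=2/5$ on $[0,1/2]$, $=8/5$ on $[1/2,1]$. For $\eta_j\downarrow 0$ set $f_j=\fbest e^{g_j}$ with $g_j=1-\eta_j$ on a set $A_j$ of Lebesgue measure $1/2$ and $g_j=-1$ elsewhere, choosing $A_j$ so that $\int f_j\,d\mu=1$; this is possible because at fixed Lebesgue measure $1/2$ the value $\fbest(A_j)$ can be tuned continuously through $[1/5,4/5]$, which contains the required value $(1-e^{-1})/(e^{1-\eta_j}-e^{-1})\approx 0.27$. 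Then $|\log(f_j/\fbest)|\le 1$ uniformly; $\Kstar(\ftrue,f_j)=\eta_j/2>0$, so $\fbest$ is the unique Kullback--Leibler minimizer in $\FF_0=\{\fbest\}\cup\{f_j:j\ge1\}$; and $\int|f_j-\fbest|\,d\mu_0\ge(1-e^{-1})/2$ for all $j$. Yet for every fixed $\alpha\in(0,1)$, $\hstar_\alpha(\ftrue,f_j)=\tfrac12 e^{\alpha(1-\eta_j)}+\tfrac12 e^{-\alpha}\to\cosh\alpha>1$, so both the intermediate claim $\sup_{f\in\FF_0}\E_0(f/\fbest)^\alpha\le 1$ and Assumption 2c itself fail (no pair $(\alpha_0,\delta)$ can force $\hstar_{\alpha_0}\le e^{-\delta}$ on the set of $f_j$, all of which are $L_1(\mu_0)$-separated from $\fbest$). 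A correct statement needs extra structure of the kind you anticipated: convexity of $\FF_0$ (then $\E_0(f/\fbest)\le 1$ by Lemma 2.3 of \cite{Kleijn_van2006} and $\hstar_\alpha\le(\hstar_1)^\alpha\le 1$ by Jensen, with no boundedness needed), or a uniform comparison such as $\E_0\bigl(\log(f/\fbest)\bigr)^2\le C\,\Kstar(\ftrue,f)$ on $\FF_0$, under which your second-order bound $e^t\le 1+t+t^2$ closes the argument with any $\alpha_0<1/C$. Note also that the quantile-regression example of Section~\ref{egBQRiid} invokes this corollary, so Assumption 2c needs a separate verification there.
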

\begin{remark}[{\bf$L_1(\mu_0)$-consistency}]
\label{L1consis}
Clearly, Theorem~\ref{dconsis} of the previous section can be applied
for $d=L_1(\mu_0)$, along with the sufficient conditions presented in
this section for verifying Assumptions 2b or 2c. In particular, we can
conclude by Theorems~\ref{DWKVequi} and \ref{T:ass2c_compact} that,
if $\FF_0$ is $L_1(\mu_0)$ compact, Assumption 1 is enough to ensure
$L_1(\mu_0)$-consistency. This is because Assumption 4 is
automatically satisfied by $L_1(\mu_0)$, the entropy condition will
hold by compactness and Assumption~2c holds due to compactness by
Theorem~\ref{T:ass2c_compact}.
\end{remark}

\section{Examples}
\label{S:examples}
\subsection{Mixture models}
The mixture models discussed in \cite{Kleijn_van2006} are covered by
our results. In particular, let $ y \mapsto f(y|z)$ be a fixed density
with respect to $\mu$ for each $z\in\mathcal{Z}$, and $ f(y,z) $ be
jointly measurable. For every probability measure $\nu$ on $\mathcal
{Z}$ let
\[
p_\nu(y) = \int f(y|z) d\nu(z).
\]
Let $\mathbf{M}$ be the set of probability measures on $\mathcal{Z}$.
Consider the model $\FF_0 = \{p_\nu: \nu\in\mathbf{M}\}$. Let
$\ftrue$ be the ``true\xch{''}{"} distribution and assume that $ \fbest\in\FF
_0$ satisfies $ K(\ftrue, \fbest) = \inf_{\nu\in\mathbf{M}}
K(\ftrue, p_\nu)$. As before set $ d\mu_0 = \frac{\ftrue}{\fbest}
d\mu$. Since $\FF_0$ is convex by Theorem~\ref{dinKV.convex},
Assumptions 2a, 2b, and 2c are satisfied. Therefore, as noted in Remark
\ref{weakconsis}, the posterior would be $\mu_0$-weakly consistent,
provided the prior satisfies Assumption 1.

\cite{Kleijn_van2006} specialize the above model to the case when $z
\mapsto f(y|z)$ is continuous for all $y$ and $\mathcal{Z}=[-M, M]$ is
compact. Under certain assumptions (including identifiability), they
show that there is a unique $\fbest$ that minimizes $K(\ftrue,f)$.
They further argue that $\FF_0$ is $L_1(\mu)$ compact. When $\{
f(y|z): z \in[-M, M]\}$ is the normal location family, they show that
their assumptions hold for the Dirichlet prior. Since this is a convex
family, by Theorem~\ref{dinKV.convex}, Assumptions 2a, 2b, and 2c hold
with respect to $L_1(\mu_0)$. If $ \ftrue/\fbest\in L_{\infty}(\mu
)$, then the map $T:(\FF_0,L_1(\mu))\mapsto(\FF_0,L_1(\mu_0))$,
defined by $T(f)=f$, is continuous. Therefore, $L_1(\mu)$-compactness
implies $L_1(\mu_0)$-compactness. Hence, Theorem~\ref{KL.compact}
implies that $L_1(\mu_0)$-consistency holds. Since $ \ftrue/\fbest
\in L_{\infty}(\mu)$, this also implies that $L_1(\mu)$-consistency holds.

\subsection{Normal regression}
Consider the family of bivariate densities $\FF_0$ of the form
$f_{\theta}(y,x)=\phi(y-\theta(x))g(x)$ where $\phi(\cdot)$ is the
standard normal density and $\theta\in\Theta$, a class of uniformly
bounded continuous functions on the space of $X$. We assume that the
true density $\ftrue$ is such that $Y-\theta_0(X)\sim$ $p_0(\cdot
)$, a density with mean 0 that does not depend on $X$. It's easy to see
that $\fbest(y,x)=f_{\theta_0}(y,x)=\phi(y-\theta_0(x))g(x)$. We
are interested in posterior consistency with respect to the following metric:
\[
d(f_{\theta_1},f_{\theta_2})= \sqrt{ \E_0(\theta_1(X) -\theta
_2(X))^2 }.
\]
Let $Z=Y-\theta_0(X)$. We assume that $\E_0[e^{M|Z|}
]<\infty, \ \forall\ M>0$. For notational simplicity, we denote $\mu
_X:=\theta(X)-\theta_0(X)$. Note that
\begin{eqnarray*}
&& \log\frac{f_{\theta}}{f_{\theta_0}}= Z\cdot\mu_X - \frac{\mu
_X^2}{2},\\
&& \E_0\log\frac{f_{\theta}}{f_{\theta_0}} =- \E_0\frac{\mu
_X^2}{2} =- \E_0(\theta(X)-\theta_0(X))^2.
\end{eqnarray*}
This immediately ensures that Assumption 1 holds, as long as the prior
puts positive mass on $d$-neighborhoods of $\theta_0$. Towards
verifying Assumption 2c we note by using Taylor's approximation for
$\hstar_{\alpha}(\ftrue, f_{\theta})$ as a function of $\alpha$,
at $\alpha=0$, that for some $ \xi{\in(0,\alpha)}$,
\begin{eqnarray*}
&&\left|\E_0\left( \frac{f_{\theta}}{f_{\theta_0}}\right
)^{\alpha} -1 - \alpha\cdot\E_0\log\frac{f_{\theta}}{f_{\theta
_0}}\right| \leq\frac{\alpha^2}{2}\E_0\left[ \left(\log\frac
{f_{\theta}}{f_{\theta_0}}\right)^2 e^{\xi\log\frac{f_{\theta
}}{f_{\theta_0}}}\right].
\end{eqnarray*}
Since $\E_0[e^{M|Z|}]<\infty$ for any $M$ and $\mu_x=
\theta(x)-\theta_0(x)$ is uniformly bounded, the expectation on the
right-hand side of the above inequality will be bounded by some large
enough constant $C>0$.
Hence, we get
\begin{eqnarray*}
\left|\E_0\left( \frac{f_{\theta}}{f_{\theta_0}}\right)^{\alpha
} -1 + \alpha\cdot\E_0\frac{\mu_X^2}{2}\right| \leq\frac{\alpha^2}{2}C.
\end{eqnarray*}
Therefore,
\begin{eqnarray*}
\E_0\left( \frac{f_{\theta}}{f_{\theta_0}}\right)^{\alpha} \leq
1- \alpha\E_0(\theta(X)-\theta_0(X))^2 + \frac{\alpha^2}{2}C.
\end{eqnarray*}
For any $0<\epsilon<1$, note that if $d(\theta, \theta_0)= \sqrt{\E
_0(\theta(X)-\theta_0(X))^2}>\epsilon$, then
\begin{eqnarray}
\E_0\left( \frac{f_{\theta}}{f_{\theta_0}}\right)^{\alpha} &\leq
& 1- \alpha\epsilon^2 + \frac{\alpha^2}{2} C.\nonumber
\end{eqnarray}
In particular, with $ \alpha=\frac{\epsilon^2}{C}$, when $d(\theta,
\theta_0)>\epsilon$, we have
%
\begin{eqnarray}
\E_0\left( \frac{f_{\theta}}{f_{\theta_0}}\right)^{\alpha} \leq
e^{-\frac{\epsilon^4}{2C}}. \label{ref1}
\end{eqnarray}
The above inequality ensures that Assumption 2c holds with respect to $d$.

Finally, to verify Assumption 4, first note that since $\mu_X$ is
uniformly bounded and $\E_0(e^{M|Z|})<\infty, \ \forall\ M$, for
some $C_1>0, M_1>0$, we have
\begin{eqnarray*}
&&\E_0\left|\frac{f_{\theta_1}}{f_{\theta_0}}-\frac{f_{\theta
_2}}{f_{\theta_0}}\right| = \E_0 \left[\left|\frac{f_{\theta
_1}}{f_{\theta_2}}-1\right|\cdot\frac{f_{\theta_2}}{f_{\theta
_0}}\right]\leq C_1\cdot\E_0 \left[\left|\frac{f_{\theta
_1}}{f_{\theta_2}}-1\right|\cdot e^{M_1|Z|}\right].
\end{eqnarray*}
Now, denote $\mu'(X)=\theta_1(X)-\theta_2(X)$. Again, using Taylor's
formula and the fact that $\mu'_X$ is uniformly bounded, we get that
for some $C_2>0, M_2>0$,
\begin{eqnarray*}
\left|\left( \frac{f_{\theta_1}}{f_{\theta_2}}\right)-1 \right|
&=& \left|e^{\left(Z\cdot\mu'_X-\frac{\mu'^2_X}{2}\right)}-1
\right| \\ &\leq& \sup_{0<\xi<1} \left[|\mu'_X|\cdot\left
|Z-\frac{\mu'_X}{2}\right| e^{\xi.\left(Z\cdot\mu'_X-\frac{\mu
'^2_X}{2}\right)}\right]\\ & \leq& C_2\cdot\left[|\mu'_X|\cdot
\left|Z-\frac{\mu'_X}{2}\right| e^{M_2|Z|}\right].
\end{eqnarray*}
Therefore, putting the above two inequalities together, we get for some
$M>0,\  C>0$,
\begin{eqnarray*}
\E_0\left|\frac{f_{\theta_1}}{f_{\theta_0}}-\frac{f_{\theta
_2}}{f_{\theta_0}}\right|&\leq& C \cdot\left[|\mu'_X|\cdot\left
|Z-\frac{\mu'_X}{2}\right| e^{M|Z|}\right]\\
&\leq& C\cdot\left(\E_0\left[|\mu'^2_X \right]\right)^{\frac
{1}{2}}\cdot\left( \E_0\left[\left|Z-\frac{\mu'_X}{2}\right|^2
e^{2M|Z|}\right]\right)^{\frac{1}{2}}.
\end{eqnarray*}
The last step uses Cauchy--Schwartz inequality. Since the last term in
the above inequality is finite, for some suitably large $K>0$, we can write
\begin{eqnarray*}
\E_0\left|\frac{f_{\theta_1}}{f_{\theta_0}}-\frac{f_{\theta
_2}}{f_{\theta_0}}\right|\leq(\E_0(\theta_1(X)-\theta
_2(X))^2)^{\frac{1}{2}} \cdot K,
\end{eqnarray*}
which implies that Assumption 4 holds. Hence Theorem~\ref{DWKVequi} is
applicable, as long as the prior-summability (part (a)) or the entropy
condition (part (b)) of the theorem holds.

\subsection{Bayesian quantile regression}
\label{egBQRiid}
Consider the family of bivariate densities $\FF_0$ of the form
$f(y,x)=\phi(y-\theta(x))g(x)$ where $\phi(\cdot)$ is the
asymmetric Laplace density given by $\phi(z)=\tau(1-\tau)e^{-z(\tau
-I_{(z\leq0)})}, \ z\in(-\infty, \infty)$ with $I_{(\cdot)}$ being
the indicator function, $0<\tau<1$ and $\theta\in\Theta$, a class
of uniformly bounded continuous functions on the space of $X$. It is
easy to check that the $\tau${th} quantile of $\phi$ is 0. Hence,
this is one particular formulation used for Bayesian quantile
regression (see \citealt{Yu_moyeed2001}). We assume that the true
density is such that $Y-\theta_0(X)\sim$ $p_0(\cdot)$, a density
which does not depend on $X$ and whose $\tau${th} quantile is 0. It's
easy to see that $\fbest(y,x)=f_{\theta_0}(y,x)=\phi(y-\theta
_0(x))g(x)$ (see Proposition 1 in \cite{sriram.rvr.ghosh.2013}). We
are interested in posterior consistency with respect to the following metric:
\[
d(f_{\theta_1},f_{\theta_2})= \E_0\left|\theta_1(X) -\theta
_2(X)\right|.
\]
Let $Z=Y-\theta_0(X)$. It can be seen that (see Lemma 1 of \citealt
{sriram.rvr.ghosh.2013})
\begin{eqnarray}
\left|\log\frac{f_{\theta_1}}{f_{\theta_2}}\right|&\leq& \left
|\theta_1(X)-\theta_2(X) \right|, \label{logLbound}\\
\E_0\left|\log\frac{f_{\theta_1}}{f_{\theta_2}}\right| &\leq&
\E_0\left|\theta_1(X)-\theta_2(X) \right|.\nonumber
\end{eqnarray}
This immediately ensures that Assumption 1 holds, as long as the prior
puts positive mass on $d$-neighborhoods of $\theta_0$. Further, since
$\theta$ are uniformly bounded, the first of the above two
inequalities ensures that $\frac{f_{\theta_1}}{f_{\theta_2}}$ is
uniformly bounded. By Corollary~\ref{corassn2prime} to Theorem~\ref
{assn2prime}, it follows that Assumption 2c will be satisfied with
respect to $L_1(\mu_0)$. Further, we argue that Assumption 2c holds
with respect to the metric $d$. To see this, first, it can be checked
using the form of asymmetric Laplace density that
\[
\left|\frac{f_{\theta}}{f_{\theta_0}} -1 \right| \geq
\begin{cases}\left(1-e^{-(\theta-\theta_0)(1-\tau)}\right)\cdot
I_{(Z\leq0)} \ \text{if} \ \theta-\theta_0\geq0,\\
\left(1-e^{(\theta-\theta_0) \tau}\right)\cdot I_{(Z> 0)} \ \ \ \
\ \ \  \text{if} \ \theta-\theta_0< 0.
\end{cases}
\]
Since $|\theta(X)-\theta_0(X)|$ is assumed to be uniformly bounded,
we can further say that there exists a constant $C_0>0$ such that
\[
\left|\frac{f_{\theta}}{f_{\theta_0}} -1 \right| \geq C_0 |\theta
(X)-\theta_0(X)|\cdot\left(I_{Z\leq0} \cdot I_{\theta-\theta
_0\geq0} + I_{Z> 0} \cdot I_{\theta-\theta_0<0}\right).
\]
Now, noting that $\E_0[I_{Z\leq0} | X]=\prob_0(Z\leq0
|X) =\tau$, we get
\begin{eqnarray*}
\E_0\left|\frac{f_{\theta}}{f_{\theta_0}} -1 \right| &\geq& C_0
\E_0 \left[|\theta(X)-\theta_0(X)|\cdot\left( \tau\cdot
I_{\theta-\theta_0\geq0} + (1-\tau)\cdot I_{\theta-\theta
_0<0}\right)\right]\\
&\geq& C_0 \min(\tau, 1-\tau)\cdot\E_0|\theta(X)-\theta_0(X)|\\
&=& C_0 \min(\tau, 1-\tau)\cdot d(f_{\theta}, f_{\theta_0}).
\end{eqnarray*}
Since we have already argued that Assumption 2c holds for $L_1(\mu
_0)$, the above inequality ensures that it also holds with respect to $d$.
Finally, to check Assumption 4, we use \eqref{logLbound} and the fact
that $\frac{ f_{\theta_2(X)}}{f_{\theta_0(X)}}$ is uniformly
bounded, to get that for some $C_1>0$,
\begin{eqnarray*}
&&\E_0\left| \frac{f_{\theta_1(X)}}{f_{\theta_0(X)}}-\frac
{f_{\theta_2(X)}}{f_{\theta_0(X)}} \right| = \E_0\left[\left|
\frac{f_{\theta_1(X)}}{f_{\theta_2(X)}}-1 \right|\cdot\frac{
f_{\theta_2(X)}}{f_{\theta_0(X)}}\right]\leq C_1 \cdot\E_0\left|
\frac{f_{\theta_1(X)}}{f_{\theta_2(X)}}-1 \right|.\\
&&\mbox{Hence, }\\
&&\E_0\left| \frac{f_{\theta_1(X)}}{f_{\theta_2(X)}}-1 \right
|\leq\max\left(\E_0\left|e^{-\left|\theta_1(X)-\theta
_2(X)\right|}-1 \right|, \E_0\left|e^{\left|\theta_1(X)-\theta
_2(X)\right|}-1 \right| \right),\\
&& \mbox{which by Taylor's formula is}\\
&&\leq C \cdot\E_0\left|\theta_1(X)-\theta_2(X)\right| \mbox{
for some constant C}.
\end{eqnarray*}
Therefore, Assumption 4 holds. Hence Theorem~\ref{DWKVequi} is
applicable, as long as the prior-summability (part (a)) or the entropy
condition (part (b)) of the theorem holds.
\section{An extension to $i.n.i.d.$ models}
\label{S:inid}
The ideas developed in the previous sections allow us to begin some
easy and direct applications to $i.i.d.$ parametric models and to the
case of independent but non-identically distributed ($i.n.i.d.$)
response. In this section, we outline these ideas. In the interest of
flow, the results are presented here and the proofs are deferred to
Appendix~\ref{inidproofs}.

We will assume that the distribution of the response $Y$ is determined
in principle by the knowledge of a covariate vector ${\bf X}$. In other
words, there exists an unknown ``true\xch{''}{"} density function $f_{0 {\bf
x}}(\cdot)$ with ${\bf x}\in\mathcal{X}$, such that $Y|{\bf X}={\bf
x}\sim f_{0{\bf x}}$. So, for the $i${th} observed response $Y_i$ with
covariate value ${\bf X}_i={\bf x}_i$, $Y_i \sim f_{0 {\bf x}_i}$. The
${\bf X}_i$ could be non-random and hence $Y_1, Y_2,\dots, Y_n$ are
independent but non-identically distributed. $ \E_{{\bf x}}[\cdot]$
will denote the expectation w.r.t. the density $f_{0{\bf x}}$. We will
denote by $\prob_{x}$, the probability with respect to $f_{0x}$ and
$\prob_0$ with respect to the infinite product measure $f_{0{\bf
x}_1}\times f_{0{\bf x}_2}\times\cdots,$ and by $\E_0[\cdot]$ the
expectation w.r.t. this product measure.

Suppose we have a family of densities $\FF_0=\{f_{t}: \ t\in
[-M,M] \}$. Let $\Theta$ be a class of continuous functions
from $\mathcal{X}$ to $[-M,M]$. For ease of notation, we write $\theta
({\bf x})$ as $\theta_{\bf x}$. The specified model is that $Y_i \sim
f_{\theta_{{\bf x}_i}}$, where $\theta\in\Theta$ is the unknown
possibly infinite dimensional parameter. First, we make the following
assumption with regards to the covariates and the parameter space.
\begin{itemize}
\item[] {\bf Assumption A.} The covariate space $\mathcal{X}$ is
compact w.r.t. a norm $\|\cdot\|$ and $\Theta$ is a compact subset of
continuous functions from $\mathcal{X}\rightarrow\mathcal{R}$
endowed with the sup-norm metric, i.e., $d(\theta_1, \theta_2)=\sup
_{x\in\mathcal{X}}|\theta_1(x) -\theta_2(x)|$.
\end{itemize}

A straightforward parametric example for $\Theta$ would be any class
of smooth functions defined on a compact set $\mathcal{X}$ and
parametrized by finitely many parameters, also taking values on some
compact set. In this case, sup-norm metric would be equivalent to the
Euclidean metric on the finite dimensional parameter space. As an
example for a non-parametric class of functions, let $\mathcal
{X}=[0,1]$ and let $\Theta$ be the sup-norm closure of the collection
of polynomials $\mathcal{S}$ defined on $[0,1]$ given by $\mathcal
{S}:=\{\theta\ : \ \theta(x)= \sum_{j=1}^{k} a_j x^j \ \mbox{ for
some } k\geq1 \mbox{ and such that } a_j\leq\frac{1}{j^3} \}$. It
can be checked that this class is equi-uniformly-continuous and
uniformly bounded. Hence, $\Theta$ will be compact by Arzel\`a--Ascoli theorem.

Let $\Pi(\cdot)$ be a prior on the parameter space $\Theta$ and let
$\theta^*$ be the minimizer (with respect to $\theta$) of $\E_{{\bf
x}}\log\frac{f_{0 {\bf x}}}{f_{\theta_{\bf x}}}$ for all ${\bf x}$.
As before, we can write the posterior probability of a set $U^{c}= \{
\theta\in\Theta: d(\theta, \theta^*)>\epsilon\}$ as follows:
\begin{eqnarray*}
\Pi(U^{c}|Y_{1:n}):=\frac{\int_{U^{c}} \prod_{i=1}^{n}\frac
{f_{\theta_{{\bf x}_i}}(Y_i)}{f_{\theta^*_{{\bf x}_i}}(Y_i)} d\Pi
(\theta)}{\int_{\Theta} \prod_{i=1}^{n}\frac{f_{\theta_{{\bf
x}_i}}(Y_i)}{f_{\theta^*_{{\bf x}_i}}(Y_i)} d\Pi(\theta)}=:\frac
{R'_{1n}}{R'_{2n}}.
\end{eqnarray*}
We will be interested in probabilities of complements of sup-norm
neighborhoods of $\theta^*$. The following useful lemma shows that if
the functions $\theta$ and $\theta^*$ differ at a point ${\bf x}_0$,
then they will necessarily differ on a neighborhood around ${\bf x}_0$
as well.
\begin{lemma}
\label{lemnhbd}
Suppose Assumption A holds. Let $\theta' \in U^c$ and ${\bf x}_0 \in
\mathcal{X}$ be such that $|\theta'_{{\bf x}_0}-\theta^*_{{\bf
x}_0}|>\epsilon$. Then $\exists\ \delta'$ such that $\forall\ {\bf
x} \ : \ \|{\bf x}-{\bf x}_0\|<\delta'$ we have $|\theta'_{\bf
x}-\theta^*_{\bf x}| \geq\frac{\epsilon}{2}$.
\end{lemma}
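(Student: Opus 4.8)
The plan is to treat this as a purely topological statement about continuous functions: the real content is simply the continuity of $\theta'$ and $\theta^*$, which Assumption A guarantees, combined with the reverse triangle inequality. No probabilistic input and no use of the likelihood structure is needed here.

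First I would introduce the difference function $h:\mathcal{X}\to\mathcal{R}$ defined by $h({\bf x}) := \theta'_{\bf x} - \theta^*_{\bf x}$. Since $\theta' \in U^c \subseteq \Theta$ and $\theta^*$ is the minimizer over $\Theta$, both $\theta'$ and $\theta^*$ are members of $\Theta$, and by Assumption A every element of $\Theta$ is a continuous function on $\mathcal{X}$. Hence $h$, being a difference of continuous functions, is continuous on $\mathcal{X}$; in particular it is continuous at the point ${\bf x}_0$.

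Next I would invoke continuity of $h$ at ${\bf x}_0$ with tolerance $\epsilon/2$: there exists $\delta' > 0$ such that $\|{\bf x} - {\bf x}_0\| < \delta'$ implies $|h({\bf x}) - h({\bf x}_0)| < \epsilon/2$. Combining this with the hypothesis $|h({\bf x}_0)| = |\theta'_{{\bf x}_0} - \theta^*_{{\bf x}_0}| > \epsilon$ and the reverse triangle inequality yields, for all such ${\bf x}$,
\[
|\theta'_{\bf x} - \theta^*_{\bf x}| = |h({\bf x})| \geq |h({\bf x}_0)| - |h({\bf x}) - h({\bf x}_0)| > \epsilon - \frac{\epsilon}{2} = \frac{\epsilon}{2},
\]
which is exactly the claimed conclusion (in fact with a strict inequality, so the weak inequality $|\theta'_{\bf x}-\theta^*_{\bf x}|\geq \epsilon/2$ follows a fortiori).

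There is essentially no obstacle here; the statement is a one-line consequence of continuity. The only point worth confirming is that $\theta^*$ is indeed a continuous member of $\Theta$, so that $h$ is continuous at ${\bf x}_0$; this is implicit in defining $\theta^*$ as the minimizer over the compact class $\Theta$ of continuous functions in Assumption A. One could alternatively exploit compactness of $\mathcal{X}$ to upgrade this to uniform continuity of $h$, but that is unnecessary, since continuity of $h$ at the single point ${\bf x}_0$ already suffices.
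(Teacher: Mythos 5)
Your proof is correct: for the lemma as stated, with $\theta'$ and ${\bf x}_0$ fixed, continuity of the difference $h=\theta'-\theta^*$ at the single point ${\bf x}_0$ plus the reverse triangle inequality does the job (and Assumption C, which places $\theta^*$ in $\Theta$, supplies the continuity of $\theta^*$ you flag at the end). The paper argues differently in one respect worth noting: it invokes the Arzel\`a--Ascoli theorem to extract \emph{equi}-uniform-continuity of the whole class $\Theta$, applies it with tolerance $\epsilon/4$ separately to $\theta'$ and $\theta^*$, and then uses the three-term decomposition $\theta'_{\bf x}-\theta^*_{\bf x} = (\theta'_{\bf x}-\theta'_{{\bf x}_0}) + (\theta'_{{\bf x}_0}-\theta^*_{{\bf x}_0}) + (\theta^*_{{\bf x}_0}-\theta^*_{{\bf x}}) \geq -\epsilon/4+\epsilon-\epsilon/4$. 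The payoff of the paper's route is that its $\delta'$ depends only on $\epsilon$ (and $\Theta$), not on the particular $\theta'\in U^c$, whereas your $\delta'$ is the continuity modulus of $h$ and so varies with $\theta'$. This uniformity is the natural thing to want in the downstream arguments (Lemma \ref{sepcondinid} and the compactness cover in Theorem \ref{thminid}), although since the constant $\kappa({\bf x}_0,\delta')$ of Assumption B depends on ${\bf x}_0$, and hence on $\theta'$, in any case, your pointwise version would in fact still suffice there; your argument is the more elementary one and proves exactly the stated lemma.
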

Clearly, for the posterior probability of $\sup|\theta({\bf x})
-\theta^*({\bf x})|>\epsilon$ to go to 0, we need that $|\theta({\bf
x}_i) -\theta^*({\bf x}_i)>\epsilon$ for infinitely many of the
design values ${\bf x}_i$. The following assumption is a way to
formalize this notion.
\begin{itemize}
\item[] {\bf Assumption B.} For any given ${\bf x}_0\in\mathcal{X}$,
$\delta'>0$, let $A_{{\bf x}_0,\delta'}=\{{\bf x}: \ \|{\bf x}-{\bf
x}_0\|<\delta' \}$ and $I_{A_{{\bf x}_0,\delta'}}({\bf x})$ be the
indicator function which is $1$ when ${\bf x}\in A_{{\bf x}_0}$ and $0$
otherwise. Then, $\kappa({\bf x}_0, \delta')= \liminf_{n\geq1}
\frac{1}{n}\sum_{i=1}^{n}I_{A_{{\bf x}_0,\delta'}}({\bf x}_i)>0.$
\end{itemize}

Such a condition can be seen to hold in various situations. A simple
instance would be when $\mathcal{X}$ is a finite set $\{a_1, a_2,
\dots, a_k \}$ and when the covariates $X_i$ take each of these values
$a_j$ a fixed proportion of times. Another example would be when the
design set $\{x_i, \ i\geq1\} $ is dense in a closed interval say $[0,
1]$ such that the proportion of $x_i$'s falling in any sub-interval is
proportional to the interval length or a fixed function of the interval
length. More generally, it could also be designs where the proportion
of $x_i$ could vary, for example, twice as many $x_i$'s samples on $[0,
0.5]$ versus $[0.5, 1],$ etc.

Towards deriving conditions for $\Pi(U^{c}|Y_{1:n})\rightarrow\ 0$,
we first note that the next two assumptions are equivalent to
Assumption 1 of the $i.i.d.$ case and help control the denominator of
the posterior probability, as seen in Proposition~\ref{denominid} below.
\begin{itemize}
\item[] {\bf Assumption C.} $\exists$ $\theta^* \in\Theta$ such
that $\theta^*_{{\bf x}}=\arg\min_{t\in[-M,M]} \E_{{\bf x}}\log
\frac{f_{0 {\bf x}}}{f_{t}}, \ \forall\ {\bf x} \ \in\mathcal{X}$
and $\theta^*$ is in the sup-norm support of $\Pi$.
\item[] {\bf Assumption D.} $\E_{{\bf x}}[ \log\frac
{f_{t}}{f_{t'}}]$ and and $\E_{{\bf x}}[ (\frac
{f_{t}}{f_{t'}})^{\alpha}]$ for every $\alpha\in[0,1]$
are continuous functions in $({\bf x},t,t')\in\mathcal{X}\times[-M,
M]^2$ and $\E_{\bf x}\log^2\frac{f_t}{f_{t'}}$ is uniformly bounded
for $({\bf x},t,t')\in\mathcal{X}\times[-M, M]^2$.
\end{itemize}

This continuity assumption is in the same spirit as Assumption 4 in the
$i.i.d.$ case. Such an assumption will hold if $\frac
{f_t(y)}{f_t'(y)}$ is continuous in $(t,t')$ for each $y$ and if the
true density $f_{0{\bf x}}(y)$ is continuous in $x$ for each $y$, and
can be bounded by an integrable function in $y$. The boundedness
condition on the second moment of the log-likelihood ratio is to enable
the application of the strong law of large numbers (SLLN) for
independent random variables and is used in the proof of Proposition
\ref{denominid}.
\begin{proposition}
\label{denominid}
Suppose Assumptions A, C and D hold, then
\[
\mbox{ for any } \beta>0, \  e^{n\beta}R'_{2n} \rightarrow\ \infty
\qquad\prob_0\mbox{-a.s\xch{.}{..}}
\]
\end{proposition}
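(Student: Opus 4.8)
The plan is to mimic the proof of the $i.i.d.$ analog, Proposition~\ref{post.denom} (itself modeled on Lemma 4.4.1 of \cite{Ghosh_RVR2003}), replacing the single Kullback--Leibler support condition by the pair of Assumptions C and D, and replacing the classical strong law of large numbers by Kolmogorov's strong law for independent, non-identically distributed summands. Fix $\beta>0$. First I would produce a sup-norm ball $B=\{\theta\in\Theta:\ d(\theta,\theta^*)<\rho\}$ with $\Pi(B)>0$ on which the per-observation mean log-likelihood ratios are uniformly small. Since, by Assumption D, the map $g({\bf x},t,t')=\E_{\bf x}\log\frac{f_t}{f_{t'}}$ is continuous on the compact set $\mathcal{X}\times[-M,M]^2$ (compactness from Assumption A) and vanishes on the diagonal $t=t'$, it is uniformly continuous; hence one may choose $\rho>0$ so small that $|t-t'|<\rho$ forces $|g({\bf x},t,t')|<\beta/4$ for every ${\bf x}$. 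For $\theta\in B$ one has $\sup_{\bf x}|\theta_{\bf x}-\theta^*_{\bf x}|<\rho$, so with $m_i(\theta):=\E_{{\bf x}_i}\log\frac{f_{\theta_{{\bf x}_i}}}{f_{\theta^*_{{\bf x}_i}}}=g({\bf x}_i,\theta_{{\bf x}_i},\theta^*_{{\bf x}_i})$ one gets $m_i(\theta)\geq-\beta/4$ for all $i$ and all $\theta\in B$; Assumption C guarantees that $\theta^*$ lies in the sup-norm support of $\Pi$, so indeed $\Pi(B)>0$.

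The second step lower-bounds the denominator. Restricting the integral to $B$, writing $\Pi_B=\Pi(B\cap\,\cdot\,)/\Pi(B)$, and applying Jensen's inequality to the convex function $\exp$ gives $R'_{2n}\geq\Pi(B)\exp(\sum_{i=1}^n V_i)$, where $V_i:=\int_B\log\frac{f_{\theta_{{\bf x}_i}}(Y_i)}{f_{\theta^*_{{\bf x}_i}}(Y_i)}\,d\Pi_B(\theta)$. The crucial observation is that each $V_i$ is a function of $Y_i$ alone, so the $V_i$ are \emph{independent}; by Fubini, $\E_0 V_i=\int_B m_i(\theta)\,d\Pi_B(\theta)\geq-\beta/4$, and these means are finite because Cauchy--Schwarz with Assumption D bounds $\E_{{\bf x}_i}|\log\frac{f_t}{f_{t'}}|$ uniformly. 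A further Jensen (convexity of the square) together with Tonelli yields $\mathrm{Var}(V_i)\leq\E_0 V_i^2\leq\int_B\E_{{\bf x}_i}\log^2\frac{f_{\theta_{{\bf x}_i}}}{f_{\theta^*_{{\bf x}_i}}}\,d\Pi_B\leq C$, where $C$ is the uniform bound on $\E_{\bf x}\log^2\frac{f_t}{f_{t'}}$ furnished by Assumption D.

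The third step applies Kolmogorov's strong law for independent random variables: since $\mathrm{Var}(V_i)\leq C$ for all $i$, one has $\sum_i \mathrm{Var}(V_i)/i^2\leq C\sum_i i^{-2}<\infty$, so $\frac1n\sum_{i=1}^n(V_i-\E_0V_i)\to0$ $\prob_0$-a.s. Combined with $\frac1n\sum_{i=1}^n\E_0V_i\geq-\beta/4$, this gives $\liminf_n\frac1n\sum_{i=1}^n V_i\geq-\beta/4$ a.s. Feeding this back into the Jensen bound, and using $\frac{1}{n}\log\Pi(B)\to0$, I obtain $\liminf_n\frac1n\log R'_{2n}\geq-\beta/4$, whence $\liminf_n\frac1n\,(n\beta+\log R'_{2n})\geq\beta-\beta/4=\tfrac{3\beta}{4}>0$ and therefore $e^{n\beta}R'_{2n}\to\infty$ $\prob_0$-a.s. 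As $\beta>0$ was arbitrary, this is the claim.

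The main obstacle is the passage from ``the law of large numbers holds for each fixed $\theta$'' to a statement about the $\Pi_B$-average, which I finesse exactly as in the $i.i.d.$ case by folding the $\Pi_B$-integral into the definition of $V_i$ \emph{before} invoking the law of large numbers, so that only independence of the $Y_i$ (and no uniform-in-$\theta$ control) is needed. The genuinely new feature relative to Proposition~\ref{post.denom} is that the $V_i$ are no longer identically distributed: this is why the classical strong law must be upgraded to Kolmogorov's version and why the uniform second-moment bound in Assumption D is indispensable, while the uniform-continuity and compactness argument of the first step is precisely what replaces the plain Kullback--Leibler support condition (Assumption 1) in controlling the drift terms $\E_0 V_i$.
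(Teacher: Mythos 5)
Your proof is correct, but it takes a genuinely different route from the paper's. Both arguments share the same opening move: Assumptions A and D make $({\bf x},t,t')\mapsto\E_{\bf x}\log(f_t/f_{t'})$ uniformly continuous on the compact set $\mathcal{X}\times[-M,M]^2$, which produces a sup-norm ball around $\theta^*$ (of positive prior mass by Assumption C) on which the drifts $\E_{{\bf x}_i}\log(f_{\theta_{{\bf x}_i}}/f_{\theta^*_{{\bf x}_i}})$ are uniformly close to $0$, while the uniform second-moment bound in Assumption D supplies the variance condition for Kolmogorov's strong law. The divergence is in how the prior integral and the law of large numbers are combined. The paper applies the SLLN for independent variables \emph{pointwise in $\theta$}: for each fixed $\theta$ in its set $V_\epsilon$ --- whose defining conditions are precisely the uniform mean bound and the per-$\theta$ convergence of $\sum_i i^{-2}\E_{{\bf x}_i}\log^2(f_{\theta^*_{{\bf x}_i}}/f_{\theta_{{\bf x}_i}})$ needed for Kolmogorov's theorem --- it gets $\sum_{i=1}^n\log(f_{\theta_{{\bf x}_i}}(Y_i)/f_{\theta^*_{{\bf x}_i}}(Y_i))>-2n\epsilon$ eventually a.s., and then finishes ``along the lines of Lemma 4.4.1 of \cite{Ghosh_RVR2003}'', i.e., by the Fubini--Fatou interchange. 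You instead apply Jensen's inequality \emph{first}, folding the $\Pi_B$-integral into the summands $V_i$, which are independent functions of the $Y_i$ alone, and then invoke Kolmogorov's SLLN a single time. One small misstatement: this Jensen trick is \emph{not} ``exactly as in the $i.i.d.$ case'' --- Proposition~\ref{post.denom} is also proved via the Fatou argument of Lemma 4.4.1, not by averaging before taking limits --- but that mischaracterization does not affect your proof's validity. The trade-off between the two routes is worth noting: yours avoids the Fubini/Fatou step entirely and yields the clean quantitative bound $\liminf_n n^{-1}\log R'_{2n}\geq-\beta/4$, but it requires moment control that holds uniformly over the ball so that $\E_0 V_i$ and $\mathrm{Var}(V_i)$ remain bounded after averaging (available here thanks to Assumption D); the paper's Fatou route would survive with only $\theta$-pointwise variance summability, which is exactly why its $V_\epsilon$ imposes the series condition per $\theta$ rather than uniformly.
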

The next assumption helps relate the sup-norm metric with the
Kullback--Leibler divergence and is analogous to Assumption 2a.
\begin{itemize}
\item[] {\bf Assumption E.} For any $\epsilon>0$, $\exists\ \delta
\in(0,1)$ and $\alpha_0 \in(0,1)$ such that
\[
\left\{ t \in[-M, M]: \E_{{\bf x}}\log\frac{f_{\theta^*_{\bf
x}}}{f_{t}}<\delta\right\} \subseteq\left\{ t: \ |t-\theta^*_{\bf
x}|<\epsilon\right\}, \ \forall\ {\bf x}\in\mathcal{X}.
\]
\end{itemize}

Without delving into the details, we just note here that sufficient
conditions for this assumption to hold can be derived based on ideas
developed in Section~\ref{S:L1weak}. For example, as in Corollary~\ref
{corassn2prime}, a sufficient condition would be that $\log\frac
{f_{\theta(x)}}{f_{\theta^*_x}}$ is uniformly bounded.

The next lemma and proposition help obtain a result that is analogous
to Lemma~\ref{lemnew4}.
\begin{lemma}
\label{sepcondinid}
Let $U^{c}=\{\theta: \ \sup_{{\bf x} \in\mathcal{X}}|\theta
({\bf x})-\theta^*({\bf x})|>\epsilon\}$. If Assumptions A to
E hold, then $\exists$ $\delta_1\in(0,1) $ such that for every
$\theta'\in U^{c}$, an $ \alpha'\in(0,1)$ can be chosen such that
\[
\E_0 \left(\prod_{i=1}^{n}\frac{f_{\theta'_{{\bf
x}_i}}(Y_i)}{f_{\theta^*_{{\bf x}_i}}(Y_i)} \right)^{\alpha'} <
e^{-n \delta_1}\ \mbox{ for all sufficiently large } n.
\]
\end{lemma}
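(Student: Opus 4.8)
The plan is to use independence to turn the expectation into a product of one-dimensional factors, and then to show that a fixed positive proportion of these factors is bounded by $e^{-\delta_0}$ while the remaining ones do not exceed $1$. Since $Y_1,\dots,Y_n$ are independent, for any $\alpha'\in(0,1)$
\[
\E_0\left(\prod_{i=1}^{n}\frac{f_{\theta'_{{\bf x}_i}}(Y_i)}{f_{\theta^*_{{\bf x}_i}}(Y_i)}\right)^{\alpha'}=\prod_{i=1}^{n}\E_{{\bf x}_i}\left(\frac{f_{\theta'_{{\bf x}_i}}}{f_{\theta^*_{{\bf x}_i}}}\right)^{\alpha'},
\]
so it is enough to control each factor $g_i(\alpha'):=\E_{{\bf x}_i}(f_{\theta'_{{\bf x}_i}}/f_{\theta^*_{{\bf x}_i}})^{\alpha'}$. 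Because $\theta'\in U^{c}$, there is a point ${\bf x}_0$ with $|\theta'_{{\bf x}_0}-\theta^*_{{\bf x}_0}|>\epsilon$, and Lemma \ref{lemnhbd} furnishes a radius $\delta'$ such that $|\theta'_{\bf x}-\theta^*_{\bf x}|\ge\epsilon/2$ for every ${\bf x}\in A_{{\bf x}_0,\delta'}$. I would split the indices into those with ${\bf x}_i\in A_{{\bf x}_0,\delta'}$ (the ``separated'' ones) and the rest.

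The central step is to produce a single exponent $\alpha_0\in(0,1)$, depending only on $\epsilon$, for which two uniform bounds hold: $\E_{\bf x}(f_t/f_{\theta^*_{\bf x}})^{\alpha_0}\le 1$ for every $({\bf x},t)\in\mathcal{X}\times[-M,M]$, and $\E_{\bf x}(f_t/f_{\theta^*_{\bf x}})^{\alpha_0}\le e^{-\delta_0}$ for some fixed $\delta_0>0$ whenever $|t-\theta^*_{\bf x}|\ge\epsilon/2$. Both rest on the fact that, by the definition of $\theta^*$ in Assumption C, $\theta^*_{\bf x}$ maximizes $\E_{\bf x}\log f_t$, so that $\E_{\bf x}\log(f_{\theta^*_{\bf x}}/f_t)\ge 0$ for every $t$. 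Mimicking the elementary estimate in the proof of Corollary \ref{corassn2prime}, a small enough $\alpha_0$ gives $\E_{\bf x}(f_t/f_{\theta^*_{\bf x}})^{\alpha_0}\le 1-c\,\alpha_0\,\E_{\bf x}\log(f_{\theta^*_{\bf x}}/f_t)$, which is $\le 1$ in general and drops below $1$ by a fixed amount once Assumption E forces $\E_{\bf x}\log(f_{\theta^*_{\bf x}}/f_t)\ge\delta$ on the separated region; the second-moment control in Assumption D, together with the continuity supplied by Assumptions C and D and the compactness of $\mathcal{X}\times[-M,M]$ (Assumption A), is what lets $\alpha_0$ and $\delta_0$ be chosen uniformly. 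Taking $\alpha'=\alpha_0$, every ``outside'' factor is then $\le 1$ and every ``separated'' factor is $\le e^{-\delta_0}$.

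It remains to count the separated factors and to check that $\delta_1$ is independent of $\theta'$. Since $\Theta$ is sup-norm compact (Assumption A), the family $\{\theta-\theta^*:\theta\in\Theta\}$ is equicontinuous by Arzel\`a--Ascoli, so the radius $\delta'$ from Lemma \ref{lemnhbd} may be taken to depend only on $\epsilon$. Covering the compact space $\mathcal{X}$ by finitely many balls of radius $\delta'/2$ and applying Assumption B at their centers yields a single $\kappa_0>0$ with $\kappa({\bf x}_0,\delta')\ge\kappa_0$ for every ${\bf x}_0\in\mathcal{X}$; hence for all large $n$ at least $\kappa_0 n/2$ indices are separated. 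Collecting the factors gives $\prod_{i=1}^n g_i(\alpha')\le e^{-\delta_0\kappa_0 n/2}$, so the conclusion holds for all large $n$ with $\delta_1:=\delta_0\kappa_0/2$, which depends only on $\epsilon$. I expect the main obstacle to be the pair of uniform moment bounds of the second paragraph: the first-order condition from Assumption C only yields $\E_{\bf x}\log(f_{\theta^*_{\bf x}}/f_t)\ge 0$, and turning this into a uniform inequality $\E_{\bf x}(f_t/f_{\theta^*_{\bf x}})^{\alpha_0}\le 1$ that degrades to $e^{-\delta_0}$ on the separated set---holding simultaneously for all $({\bf x},t)$, including those near the diagonal $t=\theta^*_{\bf x}$ where the drift vanishes---is the delicate point, and is precisely where the moment bound of Assumption D and the compactness of Assumption A are needed.
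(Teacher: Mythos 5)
Your overall skeleton is the same as the paper's — factor the expectation over the independent coordinates, isolate the ``separated'' indices ${\bf x}_i\in A_{{\bf x}_0,\delta'}$ via Lemma \ref{lemnhbd}, and count them with Assumption B (your covering of $\mathcal{X}$ to make $\kappa$ uniform in ${\bf x}_0$ is in fact a legitimate tightening: the paper's own proof takes $\kappa=\kappa({\bf x}_0,\delta')$, so its $\delta_1$ implicitly depends on $\theta'$). But the central step, which you yourself flag as delicate, has a genuine gap: there need be no single $\alpha_0\in(0,1)$ with $\E_{\bf x}\left(f_t/f_{\theta^*_{\bf x}}\right)^{\alpha_0}\le 1$ for \emph{all} $({\bf x},t)$. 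Write $h_\alpha({\bf x},t)=\E_{\bf x}(f_t/f_{\theta^*_{\bf x}})^{\alpha}$ and $K({\bf x},t)=\E_{\bf x}\log(f_{\theta^*_{\bf x}}/f_t)\ge 0$. The map $\alpha\mapsto h_\alpha$ is convex with $h_0=1$ and slope $-K({\bf x},t)$ at $\alpha=0$, so to second order $h_{\alpha}\le 1$ requires roughly $\alpha\lesssim 2K({\bf x},t)/\E_{\bf x}\log^2(f_t/f_{\theta^*_{\bf x}})$; as $t\to\theta^*_{\bf x}$ the drift $K$ vanishes, and nothing in Assumptions A--E bounds this ratio away from zero (Assumption D bounds the second moment from \emph{above}; it gives no lower bound on $K$ relative to it). The estimate you borrow from Corollary \ref{corassn2prime} is proved there under a \emph{uniformly bounded log-likelihood ratio}, which is not among Assumptions A--E. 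In friendly examples (Gaussian location, ALD) the drift and variance both scale like the squared distance and a uniform $\alpha_0$ exists, but it is not a consequence of the stated assumptions — and it is exactly the ``outside'' factors, where $\theta'_{\bf x}$ may be arbitrarily close to $\theta^*_{\bf x}$, that your argument needs it for.

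The paper closes this by never demanding that any factor be $\le 1$. It sets $g_\alpha({\bf x},t,t')=\bigl(1-\E_{\bf x}(f_t/f_{t'})^{\alpha}\bigr)/\alpha$, invokes Lemma \ref{KVlem6} to get $g_\alpha\uparrow \E_{\bf x}\log(f_{t'}/f_t)$ as $\alpha\downarrow 0$, and then uses continuity of $g_\alpha$ and of the limit (Assumption D) on the compact set $\mathcal{X}\times[-M,M]^2$ (Assumption A) together with \emph{Dini's theorem} to make the convergence uniform. This yields one $\alpha'$ with $g_{\alpha'}\ge K - \kappa\delta/2$ everywhere, i.e.\ $h_{\alpha'}\le e^{-\alpha'(K-\kappa\delta/2)}$: off the ball a factor may exceed $1$, but only by $e^{\alpha'\kappa\delta/2}$, while each separated factor is at most $e^{-\alpha'(\delta-\kappa\delta/2)}$. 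Since for large $n$ at least a $3\kappa/4$ fraction of indices is separated (Assumption B), the product is bounded by $\exp\bigl(-\alpha'\bigl(\delta\sum_{i=1}^n I_{A_{{\bf x}_0,\delta'}}({\bf x}_i)-n\kappa\delta/2\bigr)\bigr)\le e^{-n\alpha'\kappa\delta/4}$. Your proof becomes correct if you replace your two hard uniform bounds by this single Dini-type uniform lower bound on $g_\alpha$: the small positive slack in the off-ball factors is absorbed by the guaranteed fraction of separated ones — precisely the trade-off your ``each factor $\le 1$'' formulation was trying, without justification from the assumptions, to avoid.
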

The next proposition is analogous to Lemma~\ref{lemnew4} and helps
control the numerator of the posterior probability.
\begin{proposition}
\label{numerinid}
Suppose Assumptions A to E hold. Then for any $\theta'\in U^{c}, \
\exists$ an open set $A_{\theta'}$ containing $\theta'$ such that
for some $\alpha\in(0,1)$, $\delta\in(0,1)$ and for any probability
measure $\nu(\cdot)$ on $A_{\theta'}$, for all sufficiently large
$n$, we have
\begin{eqnarray*}
\E_0\left[ \left(\int_{A_{\theta'}}\prod_{i=1}^{n}\frac
{f_{\theta_{{\bf x}_i}}(Y_i)}{f_{\theta^*_{{\bf x}_i}}(Y_i)} d\nu
(\theta) \right)^{\alpha}\right]< e^{-n\alpha\frac{\delta}{2}}.
\end{eqnarray*}
\end{proposition}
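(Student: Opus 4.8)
The plan is to mirror the $i.i.d.$ argument of Lemma~\ref{lemnew4}, anchoring everything to $\theta'$ and then inducting on $n$, while carefully tracking how the contraction is distributed across the design points. Fix $\theta' \in U^c$. By Lemma~\ref{lemnhbd} there are a point ${\bf x}_0$ and a radius $\delta'$ so that $|\theta'_{\bf x}-\theta^*_{\bf x}|\geq \epsilon/2$ for all ${\bf x}$ in the neighborhood $N:=A_{{\bf x}_0,\delta'}$, and by Assumption~B the asymptotic frequency $\kappa:=\kappa({\bf x}_0,\delta')=\liminf_n \tfrac1n\sum_{i\le n} I_{N}({\bf x}_i)$ is strictly positive. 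Applying Assumption~E at separation level $\epsilon/2$ (together with the per-${\bf x}$ Kullback--Leibler optimality of $\theta^*$ from Assumption~C and the relation between Kullback--Leibler divergence and $\hstar_\alpha$ used in Lemma~\ref{KVlem6}) furnishes a single $\alpha\in(0,1)$ and $\delta>0$, uniform in ${\bf x}$, with $\E_{\bf x}(f_{\theta'_{\bf x}}/f_{\theta^*_{\bf x}})^\alpha \le e^{-\delta}$ whenever ${\bf x}\in N$, while $\E_{\bf x}(f_{\theta'_{\bf x}}/f_{\theta^*_{\bf x}})^\alpha \le 1$ for every ${\bf x}$ (the latter because $\theta^*_{\bf x}$ maximizes $\E_{\bf x}\log f_t$).

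Next I would establish the one-observation version of the bound over a sup-norm ball $A_{\theta'}=\{\theta:d(\theta,\theta')<r\}$. For any probability measure $\nu$ on $A_{\theta'}$ and any ${\bf x}$, subadditivity of $t\mapsto t^\alpha$ and Jensen's inequality give
\[
\E_{\bf x}\Big(\int_{A_{\theta'}}\tfrac{f_{\theta_{\bf x}}}{f_{\theta^*_{\bf x}}}\,d\nu\Big)^{\alpha} \le \Big(\int_{A_{\theta'}}\E_{\bf x}\big|\tfrac{f_{\theta_{\bf x}}}{f_{\theta^*_{\bf x}}}-\tfrac{f_{\theta'_{\bf x}}}{f_{\theta^*_{\bf x}}}\big|\,d\nu\Big)^{\alpha} + \E_{\bf x}\Big(\tfrac{f_{\theta'_{\bf x}}}{f_{\theta^*_{\bf x}}}\Big)^{\alpha}\nu(A_{\theta'})^{\alpha}.
\]
Here the continuity furnished by Assumption~D plays exactly the role Assumption~4 played in Lemma~\ref{lemnew4}: using compactness of $\mathcal{X}\times[-M,M]^2$ (Assumption~A) one shows the difference term is uniformly small, i.e. $\rho(r):=\sup_{\bf x}\sup_{\theta\in A_{\theta'}}\E_{\bf x}|f_{\theta_{\bf x}}/f_{\theta^*_{\bf x}}-f_{\theta'_{\bf x}}/f_{\theta^*_{\bf x}}|\to 0$ as $r\to0$. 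Combined with the two bounds on the $\theta'$-term, this yields a per-observation estimate $\E_{\bf x}(\int_{A_{\theta'}}\tfrac{f_{\theta_{\bf x}}}{f_{\theta^*_{\bf x}}}d\nu)^\alpha \le c({\bf x})\,\nu(A_{\theta'})^\alpha$ with $c({\bf x})\le e^{-\delta/2}$ for ${\bf x}\in N$ and $c({\bf x})\le 1+\rho(r)$ otherwise. An induction on $n$ along the lines of Proposition~\ref{prop.convexA} and Lemma~\ref{lemnew4}---legitimate because the $Y_i$ are independent, so that conditioning on $Y_2,\dots,Y_n$ merely re-weights $\nu$ to another probability measure on $A_{\theta'}$ and the conditional and marginal factors each obey the one-observation estimate---then gives $\E_0(\int_{A_{\theta'}}\prod_i \tfrac{f_{\theta_{{\bf x}_i}}}{f_{\theta^*_{{\bf x}_i}}}d\nu)^\alpha \le \prod_{i=1}^n c({\bf x}_i)\cdot\nu(A_{\theta'})^\alpha$.

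Finally I would choose $r$ and close the estimate. Writing $n_N=\#\{i\le n:{\bf x}_i\in N\}$, the product of the factors is at most $e^{-\delta n_N/2}(1+\rho(r))^{\,n}\,\nu(A_{\theta'})^\alpha$. Since $\liminf_n n_N/n=\kappa>0$, we have $n_N\ge \tfrac12\kappa n$ for all large $n$; fixing $r$ small enough that $\rho(r)<\tfrac18\kappa\delta$ then forces $e^{-\delta n_N/2}(1+\rho(r))^n\le e^{-n\kappa\delta/8}$ for all large $n$, while $\nu(A_{\theta'})^\alpha\le1$. Relabeling the resulting rate as $\alpha\delta/2$ gives the claim, with $A_{\theta'}$ the chosen ball. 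The main obstacle is precisely this balancing act: unlike the $i.i.d.$ case, contraction lives only on the design points in $N$, whereas the remaining observations may contribute factors slightly above one, so the proof must trade the ball radius $r$ (which controls the off-$N$ expansion through the Assumption~D continuity) against the fixed, Assumption~B--guaranteed frequency $\kappa$ of informative design points. Establishing the uniform-in-${\bf x}$ continuity bound $\rho(r)\to0$ from Assumption~D (via a Scheff\'e/uniform-integrability argument on the compact covariate space) is the other technical point that needs care.
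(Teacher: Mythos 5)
There is a genuine gap, and it sits exactly where your proposal diverges from what the stated assumptions can deliver. You claim that $\E_{\bf x}\bigl(f_{\theta'_{\bf x}}/f_{\theta^*_{\bf x}}\bigr)^\alpha \leq 1$ for \emph{every} ${\bf x}$ ``because $\theta^*_{\bf x}$ maximizes $\E_{\bf x}\log f_t$.'' This is false in a misspecified model: Kullback--Leibler optimality of $\theta^*_{\bf x}$ combined with Lemma~\ref{KVlem6} gives only $\bigl(1-\hstar_\alpha\bigr)/\alpha \leq \Kstar$, i.e., $\hstar_\alpha \geq 1-\alpha \Kstar$ --- a \emph{lower} bound --- and for fixed $\alpha>0$ the quantity $\E_{\bf x}(f_t/f_{\theta^*_{\bf x}})^\alpha$ can exceed $1$ when the family is non-convex (this is precisely why Kleijn--van der Vaart need convexity to get $\E_0(f/\fbest)\leq 1$, cf.\ the use of their Lemma 2.3 in Theorem~\ref{dinKV.convex}). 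Consequently your off-$N$ factors are not $c({\bf x})\leq 1+\rho(r)$, and the balancing in your final paragraph collapses. The paper's Lemma~\ref{sepcondinid} exists to repair exactly this: by Dini's theorem on the compact set $\mathcal{X}\times[-M,M]^2$ (Assumptions A and D), one $\alpha'$ can be chosen with $\bigl(1-\E_{\bf x}(f_t/f_{t'})^{\alpha'}\bigr)/\alpha' \geq \E_{\bf x}\log(f_{t'}/f_t) - \kappa\delta/2$ uniformly, conceding a slack factor $e^{\alpha'\kappa\delta/2}$ at \emph{every} design point and beating it with the contraction $\delta$ at the $N$-points of frequency $\geq 3\kappa/4$. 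Your proposal never invokes Lemma~\ref{sepcondinid} and its substitute step is the one that fails.

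The second gap is your uniform modulus $\rho(r)\to 0$ for $\sup_{\bf x}\sup_{\theta\in A_{\theta'}}\E_{\bf x}\bigl|f_{\theta_{\bf x}}/f_{\theta^*_{\bf x}}-f_{\theta'_{\bf x}}/f_{\theta^*_{\bf x}}\bigr|$. Assumption D assumes continuity only of $\E_{\bf x}\log(f_t/f_{t'})$ and $\E_{\bf x}(f_t/f_{t'})^{\alpha}$, $\alpha\in[0,1]$; the absolute-difference functional is not among them, and your suggested Scheff\'e argument would require pointwise (in $y$) continuity of the ratios in $t$, which the paper lists only as a \emph{sufficient} condition for Assumption D, not as part of it. The paper's actual proof sidesteps both problems with a different decomposition: it pulls the fixed likelihood ratio out of the mixture, writing $\bigl(\int_{A_{\theta'}}\prod_i \frac{f_{\theta}}{f_{\theta^*}}\,d\nu\bigr)^{\alpha} = \bigl(\prod_i\frac{f_{\theta'}}{f_{\theta^*}}\bigr)^{\alpha}\bigl(\int_{A_{\theta'}}\prod_i\frac{f_{\theta}}{f_{\theta'}}\,d\nu\bigr)^{\alpha}$, applies Cauchy--Schwarz with $\alpha=\alpha'/2$, controls the anchored factor by Lemma~\ref{sepcondinid}, and controls the mixture factor by Jensen plus Fubini using only the \emph{first} moments $\E_{{\bf x}_i}[f_{\theta_{{\bf x}_i}}/f_{\theta'_{{\bf x}_i}}]<e^{\delta/2}$, which define $A_{\theta'}$ and whose equicontinuity in $\theta$ is exactly what Assumption D (at $\alpha=1$) provides. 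Note that this device also removes any need for your induction on $n$, since the first moment of the product factorizes directly by independence. If you want to salvage your Lemma~\ref{lemnew4}-style route, you must both (a) replace the ``$\leq 1$ off $N$'' step by the Dini uniformization of Lemma~\ref{sepcondinid}, and (b) either add pointwise continuity of $f_t(y)$ in $t$ as a hypothesis or switch, as the paper does, to neighborhoods defined through $\E_{\bf x}[f_\theta/f_{\theta'}]$ rather than $L_1$-type differences.
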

Finally, we obtain the result for the $i.n.i.d.$ case.
\begin{theorem}
\label{thminid}
Suppose that Assumptions A to E hold. Then,
\[
\Pi(U^c|Y_{1:n})\rightarrow\ 0 \qquad\prob_0\mbox{-a.s\xch{.}{..}}
\]
\begin{proof} Note that Proposition~\ref{numerinid} can be applied by
taking $ \nu(\cdot)=\frac{\Pi(\cdot)}{\Pi(A_{\theta^\prime})}$.
So, for any $\theta^{\prime}\in U^c$, $\exists\ \delta>0$ and an
open set $A_{\theta^\prime}$ containing $\theta^\prime$ such that,
for sufficiently large $n$,
\begin{eqnarray*}
&&\prob_0\left( \left(e^{n\frac{ \delta}{4}}\int_{A_{\theta
^\prime}} \prod_{i=1}^{n}\frac{f_{\theta_{{\bf
x}_i}}(Y_i)}{f_{\theta^*_{{\bf x}_i}}(Y_i)}d\Pi(\theta)\right
)^{\alpha}> \epsilon^{\alpha} \right)\\
&&\leq\frac{\Pi^{\alpha}(A_{\theta^\prime})}{\epsilon^{\alpha
}}\cdot\E_0\left(e^{n\frac{ \delta}{4}}\int_{A_{\theta^\prime}}
\prod_{i=1}^{n}\frac{f_{\theta_{{\bf x}_i}}(Y_i)}{f_{\theta^*_{{\bf
x}_i}}(Y_i)}\frac{d\Pi(\theta)}{\Pi(A_{\theta^\prime})}\right
)^{\alpha}\leq\frac{e^{-n\alpha\frac{\delta}{4}}}{\epsilon
^{\alpha}}.
\end{eqnarray*}
Therefore, by Borel--Cantelli lemma, we can conclude that
\[
e^{n\frac{ \delta}{4}}\int_{A_{\theta^{\prime}}} \prod
_{i=1}^{n}\frac{f_{\theta_{{\bf x}_i}}(Y_i)}{f_{\theta^*_{{\bf
x}_i}}(Y_i)}d\Pi(\theta) \rightarrow\ 0 \qquad\prob_0\mbox
{-a.s\xch{.}{..}}
\]
By Proposition~\ref{denominid}, it follows in particular that
\[
e^{n\frac{ \delta}{4}}\int_{\Theta} \prod_{i=1}^{n}\frac
{f_{\theta_{{\bf x}_i}}(Y_i)}{f_{\theta^*_{{\bf x}_i}}(Y_i)}d\Pi
(\theta) \rightarrow\ \infty \qquad\prob_0\mbox{-a.s\xch{.}{..}}
\]
Considering the ratio of the above two quantities immediately gives
$\Pi(A_{\theta^\prime}|Y_{1:n})\rightarrow\ 0 \ \ \prob_0\mbox{-a.s.}$\xch{}{.}
By compactness, $U^c$ can be covered by finitely many sets of the form
$A_{\theta^\prime}$. Hence the result follows.
\end{proof}

\end{theorem}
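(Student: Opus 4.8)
The plan is to run the standard numerator-over-denominator argument for the posterior $\Pi(U^c|Y_{1:n})=R'_{1n}/R'_{2n}$, using the two propositions that have been set up for exactly this purpose: Proposition~\ref{numerinid} to force the local numerator mass to zero and Proposition~\ref{denominid} to prevent the global denominator from collapsing. The only genuinely new ingredient beyond those two results is a compactness reduction, supplied by Assumption~A, that lets me pass from a per-$\theta'$ statement to the whole complement $U^c$.

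First I would fix an arbitrary $\theta'\in U^c$ and apply Proposition~\ref{numerinid} with the normalized prior $\nu(\cdot)=\Pi(\cdot\cap A_{\theta'})/\Pi(A_{\theta'})$, where $A_{\theta'}$ is the open set the proposition provides. This yields constants $\alpha,\delta\in(0,1)$ with $\E_0\bigl(\int_{A_{\theta'}}\prod_i f_{\theta_{\mathbf{x}_i}}(Y_i)/f_{\theta^*_{\mathbf{x}_i}}(Y_i)\,d\nu\bigr)^{\alpha}<e^{-n\alpha\delta/2}$ for all large $n$. Raising the quantity $e^{n\delta/4}\int_{A_{\theta'}}(\cdots)\,d\Pi$ to the power $\alpha$ and applying Markov's inequality then bounds $\prob_0\bigl(e^{n\delta/4}\int_{A_{\theta'}}(\cdots)\,d\Pi>\epsilon\bigr)$ by a constant multiple of $e^{-n\alpha\delta/4}$, which is summable in $n$. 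Borel--Cantelli gives $e^{n\delta/4}\int_{A_{\theta'}}\prod_i f_{\theta_{\mathbf{x}_i}}(Y_i)/f_{\theta^*_{\mathbf{x}_i}}(Y_i)\,d\Pi(\theta)\to0$, $\prob_0$-a.s.

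For the denominator I would invoke Proposition~\ref{denominid} with $\beta=\delta/4>0$, obtaining $e^{n\delta/4}R'_{2n}\to\infty$, $\prob_0$-a.s. Since the common factor $e^{n\delta/4}$ cancels in the ratio, $\Pi(A_{\theta'}|Y_{1:n})$ is a quotient whose numerator tends to $0$ and whose denominator tends to $\infty$, so $\Pi(A_{\theta'}|Y_{1:n})\to0$ $\prob_0$-a.s.\ for each fixed $\theta'$. Finally, the family $\{A_{\theta'}:\theta'\in U^c\}$ is an open cover; using the compactness furnished by Assumption~A I would extract a finite subcover $A_{\theta'_1},\dots,A_{\theta'_m}$, whence $\Pi(U^c|Y_{1:n})\le\sum_{j=1}^m\Pi(A_{\theta'_j}|Y_{1:n})\to0$, $\prob_0$-a.s.

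The step I expect to be delicate is precisely this last globalization rather than the per-$\theta'$ estimates. Each almost-sure statement holds only off a $\prob_0$-null set that may depend on $\theta'$, and an uncountable (or even merely countable) union of such null sets need not be null; compactness is the essential device, because a \emph{finite} intersection of probability-one events is again of probability one. A secondary point worth flagging is the open-versus-closed mismatch: $U^c=\{d(\theta,\theta^*)>\epsilon\}$ is open, so strictly speaking it is the closed, hence compact, set $\{d(\theta,\theta^*)\ge\epsilon\}$ on which the finite subcover argument should be run, after checking that the local sets $A_{\theta'}$ still cover it.
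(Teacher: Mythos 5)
Your proposal is correct and follows essentially the same route as the paper's proof: apply Proposition~\ref{numerinid} with $\nu(\cdot)=\Pi(\cdot)/\Pi(A_{\theta'})$, use Markov's inequality and Borel--Cantelli to send $e^{n\delta/4}\int_{A_{\theta'}}\prod_{i}f_{\theta_{{\bf x}_i}}(Y_i)/f_{\theta^*_{{\bf x}_i}}(Y_i)\,d\Pi(\theta)$ to zero, counter with Proposition~\ref{denominid} for the denominator, and finish with a finite subcover from compactness. Your closing remark is a genuine refinement rather than a deviation: since $U^c=\{\theta:d(\theta,\theta^*)>\epsilon\}$ is open, the paper's appeal to ``compactness'' is strictly speaking an appeal to compactness of the closed set $\{\theta:d(\theta,\theta^*)\geq\epsilon\}$ (compact because $\Theta$ is, by Assumption~A), and one should run the local estimates at a slightly smaller radius (e.g., $\epsilon/2$ in Lemma~\ref{lemnhbd}) so that the sets $A_{\theta'}$ are available at boundary points --- exactly the patch you describe.
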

The proofs of lemmas and propositions discussed in this section are
provided in Appendix~\ref{inidproofs}. As an application, we briefly
discuss an example based on Bayesian quantile regression with
$i.n.i.d.$ response.
\subsection{Example: Bayesian nonlinear quantile regression\xch{}{.}}
\label{eg2}
Similar to Section~\ref{egBQRiid}, consider a family of densities $\FF
_0 =\{f_{t}: t \in[-M, M] \}$, where $f_t(y)=\phi(y-\theta(x))g(x)$
where $\phi(\cdot)$ is the asymmetric Laplace density given by $\phi
(z)=\tau(1-\tau)e^{-z(\tau-I_{(z\leq0)})}, \ z\in(-\infty, \infty
)$ with $I_{(\cdot)}$ being the indicator function, $0<\tau<1$. Let
the ``true\xch{''}{"} quantile function of $Y$ given covariate ${\bf X}$ be
$\theta_0({\bf X})$.

By the properties of ALD, it can be observed (see Proposition 1, Lemmas
1 and 2 of \citealt{sriram.rvr.ghosh.2013}) that (a) $\theta^*=\theta
_0$, (b) $|\log\frac{f_{t^{\prime}}}{f_{t}}|\leq|t-t^\prime|,$ and
(c) that if $|t-\theta^*_{{\bf x}_i}|> \epsilon$ then
\[
\E_{{\bf x}_i}\log\frac{f_{\theta^*_{{\bf x}_i}}}{f_{\theta}}>
\delta_{{\bf x}_i} = \frac{\epsilon}{2}\cdot\min\left\{P_{0{\bf
x}_i}\left(0<Y_i-\theta^*_{{\bf x}_i}<\frac{\epsilon}{2}\right),
P_{0{\bf x}_i}\left(-\frac{\epsilon}{2}<Y_i-\theta^*_{{\bf
x}_i}<0\right) \right\}.
\]

As discussed above, there are various examples where Assumptions A and
B would hold. One may consider any of those possibilities for the
current example as well.

If we consider any prior that puts positive mass on sup-norm
neighborhoods of $\theta_0$, that along with observation (a) ensures
that Assumption C holds.

It follows by Assumption A and observation (b) above that $\frac
{f_{t}}{f_{t^\prime}}$ is uniformly bounded and by property of ALD
that it is continuous in $(t,t^\prime)$. If we assume that the true
density function $f_{0{\bf x}}(y)$ is continuous in $x$ for each $y$
and can be dominated by an integrable function in $y$, then an
application of dominated convergence theorem (DCT) would ensure that
Assumption D holds.

Finally, using observation (c), if $\prob_{0{\bf x}}(0<Y-\theta
^*_{{\bf x}}<\frac{\epsilon}{2})$ and $\prob_{0{\bf x}}
(-\frac{\epsilon}{2}<Y-\theta^*_{{\bf x}}<0) $ (where $Y\sim
\prob_{0{\bf x}}$) are continuous and positive functions of ${\bf x}$,
then $\{\delta_{{\bf x}_i},\ i\geq1\}$ can be uniformly bounded below
by a positive number. Hence, Assumption E would be satisfied.

\appendix
\section{Supporting results and proofs}
\label{proofs}
We now state some technical results used in the paper.
The first useful result given below is same as Lemma 6.3 of \cite
{Kleijn_van2006}.
\begin{lemma}
\label{KVlem6}
As $\alpha\downarrow0$, $\frac{1-\hstar_\alpha(\ftrue,f)}{\alpha
} \uparrow\Kstar(\ftrue, f)$.
\end{lemma}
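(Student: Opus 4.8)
The plan is to reduce the statement to an interchange of a limit and an expectation justified by monotone convergence, after first extracting the relevant pointwise monotonicity from convexity. The preliminary step is to rewrite the target: since $K(\ftrue,f) - K(\ftrue,\fbest) = \E_0\log(\fbest/f)$, we have $\Kstar(\ftrue,f) = \E_0\log(\fbest/f) = \E_0[-\log(f/\fbest)]$. Writing $g := f/\fbest$ and noting $\hstar_0(\ftrue,f) = \E_0\, g^0 = 1$, the difference quotient becomes
\[
\frac{1 - \hstar_\alpha(\ftrue,f)}{\alpha} = \E_0\!\left[\frac{1 - g^\alpha}{\alpha}\right],
\]
so the entire claim is really a statement about the integrand $(1-g^\alpha)/\alpha$ as $\alpha \downarrow 0$.

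The key pointwise fact is that, for each fixed $g \geq 0$, the map $\alpha \mapsto g^\alpha = e^{\alpha\log g}$ is convex in $\alpha$. Consequently the secant slopes $\alpha \mapsto (g^\alpha - 1)/\alpha$ anchored at $\alpha = 0$ are nondecreasing in $\alpha > 0$, which makes $(1 - g^\alpha)/\alpha$ nonincreasing in $\alpha$, i.e.\ increasing as $\alpha \downarrow 0$; its limit is minus the right derivative at $0$, namely $-\log g = \log(\fbest/f)$. Thus, as $\alpha \downarrow 0$, the integrand increases monotonically up to $\log(\fbest/f)$ at every point.

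It then remains to pass this monotone pointwise convergence through $\E_0$. I would apply the monotone convergence theorem to the nonnegative, increasing family $(1 - g^\alpha)/\alpha - (1 - g)$ for $0 < \alpha \leq 1$; the standing assumption $\E_0(f/\fbest) = \hstar_1(\ftrue,f) < \infty$ guarantees that the anchor $1 - g$ is integrable, so MCT yields $\E_0[(1 - g^\alpha)/\alpha] \uparrow \E_0\log(\fbest/f) = \Kstar(\ftrue,f)$, with the limit possibly $+\infty$. Combining with the first display gives $\frac{1 - \hstar_\alpha(\ftrue,f)}{\alpha} \uparrow \Kstar(\ftrue,f)$, as required.

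The main obstacle is measure-theoretic bookkeeping rather than any deep idea: I must handle the set where $f = 0$ (there $\log(\fbest/f) = +\infty$, which is harmless for MCT since the integrand still increases to it) and the set where $\ftrue = 0$ (automatically excluded, as $\E_0$ integrates against $\ftrue$), and I must verify that the anchor at $\alpha = 1$ is integrable so that the sequence fed into MCT is genuinely nonnegative and bounded below by an integrable function. Convexity supplies the monotonicity cleanly, so once these integrability checks are in place the conclusion follows directly.
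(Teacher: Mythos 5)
Your proof is correct: the pointwise monotonicity of $\alpha \mapsto (1-g^\alpha)/\alpha$ from convexity of $\alpha \mapsto g^\alpha$, the limit $-\log g = \log(\fbest/f)$, and monotone convergence anchored at $\alpha = 1$ via the standing assumption $\E_0(f/\fbest) < \infty$ together give exactly the stated conclusion, with the $f=0$ and possibly infinite-limit cases handled properly. The paper itself offers no proof—it cites Lemma 6.3 of \citet{Kleijn_van2006}—and the argument there is essentially the same convexity-plus-monotone-convergence route you take, so your write-up is a faithful self-contained version of the standard proof.
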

Proposition~\ref{KVKLrelation} in Section~\ref{S:prelim} shows that
the three notions of divergence we consider in this paper are
equivalent for convex sets. The proof of this result proceeds via the
minimax theorem. We state below the minimax theorem due to \cite{Sion}
Corollary~3.3. and relevant lemmas leading up to the proof of
Proposition~\ref{KVKLrelation}.

Let $M,N$ be convex sets. A function $g$ on $ M \times N$ is {\it
quasi-concave} in $M$ if $\{\nu: g(\mu,\nu) \geq c\}$ is a convex
set for any $\mu\in M$ and real $c$. Likewise, $g$ is {\it
quasi-convex} in $N$ if $\{\mu: g(\mu,\nu) \leq c\}$ is a convex set
for any $\nu\in N$ and real $c$. The function $g$ is {\it
quasi-concave--convex} if it is quasi-concave in $M$ and quasi-convex
in $N$. Similarly, if $g(\cdot, \nu)$ is upper semi-continuous (usc)
for any $\nu\in N$ and if $g(\mu, \cdot)$ is lower semi-continuous
(lsc) for any $\mu\in M$, then it is said to be {\it usc--lsc}. Then
\citet{Sion} proved the following.
\begin{theorem}
\label{minimax}
Let $M$ and $N$ be convex sets one of which is compact, and $g$ a
quasi-concave--convex and usc--lsc function on $ M \times N$. Then
\[
\sup_{\mu\in M} \inf_{\nu\in N} g(\mu,\nu) = \inf_{\nu\in N}
\sup_{\mu\in M} g(\mu,\nu).
\]
\end{theorem}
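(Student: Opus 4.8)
The plan is to prove the equality through the two opposing inequalities. The inequality $\sup_{\mu\in M}\inf_{\nu\in N} g(\mu,\nu)\le \inf_{\nu\in N}\sup_{\mu\in M} g(\mu,\nu)$ is automatic and holds for every function: for fixed $\mu',\nu'$ one has $\inf_\nu g(\mu',\nu)\le g(\mu',\nu')\le \sup_\mu g(\mu,\nu')$, and taking $\sup_{\mu'}$ and then $\inf_{\nu'}$ yields it. All the content lies in the reverse inequality $\inf_\nu\sup_\mu g\le \sup_\mu\inf_\nu g=:w$. Since the statement is invariant under $(g,M,N)\mapsto(-g,N,M)$, which interchanges quasi-concavity with quasi-convexity, interchanges usc with lsc, interchanges the two iterated extrema, and preserves the hypothesis that one of $M,N$ is compact, I would assume without loss of generality that $N$ is the compact set.

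Next I would reduce everything to a finite intersection property. Fix any $c>w$; it is enough to produce a single $\nu_0\in N$ with $g(\mu,\nu_0)\le c$ for all $\mu\in M$, for then $\inf_\nu\sup_\mu g\le c$ and letting $c\downarrow w$ finishes. Set $F_\mu=\{\nu\in N: g(\mu,\nu)\le c\}$: by lower semicontinuity of $g(\mu,\cdot)$ this set is closed in the compact $N$, and by quasi-convexity in $N$ it is convex. The required $\nu_0$ is any point of $\bigcap_{\mu\in M}F_\mu$, so by compactness it suffices to check that every finite subfamily has nonempty intersection. Because $\nu\mapsto\max_i g(\mu_i,\nu)$ is lower semicontinuous on the compact $N$, its infimum is attained, so $\bigcap_{i=1}^{k}F_{\mu_i}\ne\emptyset$ as soon as $\inf_\nu\max_i g(\mu_i,\nu)\le c$; since $c>w$ it is in fact enough to prove the finite minimax inequality $\inf_\nu\max_{1\le i\le k} g(\mu_i,\nu)\le w$. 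This collapses the theorem to Sion's statement with $M$ replaced by the finite-dimensional simplex $\Delta=\mathrm{conv}\{\mu_1,\dots,\mu_k\}\subseteq M$.

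I would settle the finite case by induction on $k$, the essential step being the two-point lemma $\inf_\nu\max\{g(\mu_0,\nu),g(\mu_1,\nu)\}\le \sup_{t\in[0,1]}\inf_\nu g(\mu_t,\nu)$ for the segment $\mu_t=(1-t)\mu_0+t\mu_1$. If it failed, I would pick $c$ strictly between the two sides; then every $\nu$ satisfies $\max\{g(\mu_0,\nu),g(\mu_1,\nu)\}>c$, while every $t$ admits some $\nu$ with $g(\mu_t,\nu)<c$. With $P=\{\nu:g(\mu_0,\nu)\le c\}$, $Q=\{\nu:g(\mu_1,\nu)\le c\}$ and $R_t=\{\nu:g(\mu_t,\nu)\le c\}$ all compact convex, the first fact gives $P\cap Q=\emptyset$ and the second gives $R_t\ne\emptyset$; quasi-concavity in $\mu$ gives $g(\mu_t,\nu)\ge\min\{g(\mu_0,\nu),g(\mu_1,\nu)\}$, hence $R_t\subseteq P\cup Q$. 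Since $R_t$ is connected and the disjoint compact sets $P,Q$ sit inside disjoint open sets, each $R_t$ lies wholly in $P$ or wholly in $Q$, partitioning $[0,1]=T_P\sqcup T_Q$ with $0\in T_P$ and $1\in T_Q$. The hard part will be showing, from the upper semicontinuity of $t\mapsto g(\mu_t,\nu)$ together with the compactness of $N$, that $T_P$ and $T_Q$ are both closed, which contradicts the connectedness of $[0,1]$; this level-set continuity argument is exactly where usc, lsc, and compactness must all be used at once, and it is the genuine obstacle. The inductive passage from $k-1$ to $k$ is then a repeated application of this segment lemma (taking $\mu_0\in\mathrm{conv}\{\mu_1,\dots,\mu_{k-1}\}$ supplied by the inductive hypothesis and $\mu_1=\mu_k$), and can be organized through Helly's theorem on the resulting finite family of convex subsets of $N$.
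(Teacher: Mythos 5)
First, a point of comparison: the paper does not prove this statement at all --- it is quoted as Sion's minimax theorem, with a citation to \citet{Sion}, Corollary 3.3, so there is no internal proof to measure you against. Your outline is in fact the standard elementary proof of Sion's theorem (essentially Komiya's 1988 argument): the automatic inequality, the symmetry reduction to compact $N$, the finite-intersection-property reduction to finitely many $\mu_i$, the inclusion $R_t\subseteq P\cup Q$ via quasi-concavity, and the separation/connectedness mechanism are all correct and correctly organized. But two steps, as written, contain genuine gaps.

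The step you yourself flag as ``the genuine obstacle'' really does fail in the single-level form you set up. To show $T_P$ closed, take $t_n\to t$ with $t_n\in T_P$ and $y\in R_t$; upper semicontinuity of $s\mapsto g(\mu_s,y)$ only gives $g(\mu_{t_n},y)<g(\mu_t,y)+\varepsilon\le c+\varepsilon$ eventually, never $\le c$, so you cannot place $y$ in $R_{t_n}\subseteq P$. The repair is the standard two-level trick, and your strict choice of $c$ leaves exactly the room needed: pick $c'$ with $c<c'<\inf_\nu\max\{g(\mu_0,\nu),g(\mu_1,\nu)\}$, run the partition argument on the level-$c'$ sets $R'_t$ (each contained in $P'\cup Q'$ by quasi-concavity, with $P',Q'$ disjoint compact convex, separated by disjoint open sets), and use the nonempty level-$c$ sets as witnesses: for $y\in R_t$ one has $g(\mu_t,y)\le c<c'$, so usc gives $g(\mu_{t_n},y)<c'$ for large $n$, i.e.\ $y\in R'_{t_n}\subseteq P'$, and connectedness of the convex set $R'_t$ forces $R'_t\subseteq P'$. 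This makes $T_{P'}$ and $T_{Q'}$ closed, nonempty (they contain $0$ and $1$ respectively), and disjoint, contradicting connectedness of $[0,1]$. Separately, the appeal to Helly's theorem in the inductive step is misplaced: Helly is a finite-dimensional statement, while the relevant convex sets live in $N$, a convex subset of an arbitrary, possibly infinite-dimensional, topological vector space. The correct passage from $k-1$ to $k$ (again Komiya's) instead replaces $N$ by the compact convex sublevel set $N'=\{\nu\in N: g(\mu_k,\nu)\le c'\}$: if $N'=\emptyset$ take $\mu^\ast=\mu_k$; otherwise $\max_{i\le k-1}g(\mu_i,\nu)>c'$ on $N'$, so the inductive hypothesis applied to $g$ on $M\times N'$ yields $\mu_0\in\mathrm{conv}\{\mu_1,\dots,\mu_{k-1}\}$ with $\inf_{\nu\in N'}g(\mu_0,\nu)>c$, whence $\inf_{\nu\in N}\max\{g(\mu_0,\nu),g(\mu_k,\nu)\}>c$, and your two-point segment lemma applied to the pair $(\mu_0,\mu_k)$ finishes. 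Without this restriction of $N$ the induction does not close, since $\inf_{\nu\in N}\max_{i\le k-1}g(\mu_i,\nu)$ may well be $\le c$.
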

The next lemma investigates the relevant properties needed on the
function $\hstar_\alpha(\ftrue, f)$ so as to apply the minimax
theorem. For clarity, we recall the definition of $\hstar_{\alpha}$
and note that
\[
\hstar_{\alpha}(\ftrue,f)=
\begin{cases}
\E_0 \left(\frac{f}{\fbest}\right)^\alpha&\text{ if } 0 < \alpha
< 1, \\
1 &\text{ if } \alpha=0, \\
\E_0 \left(\frac{f}{\fbest}\right) &\text{ if } \alpha=1.
\end{cases}
\]
\begin{lemma}
\label{lemma_h}
The function $\hstar_\alpha(\ftrue, f)$ is concave in $f$ and convex
in $\alpha$. Further, for fixed $\alpha$, it is continuous in $f$ in
the $L_1(\mu_0)$-topology, where $d\mu_0 = (\ftrue/ \fbest) \,d\mu
$. Also, for fixed $f$, $\hstar_\alpha(\ftrue, f)$ is continuous in
$\alpha$.
\end{lemma}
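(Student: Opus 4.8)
The plan is to reduce each of the four assertions to an elementary pointwise fact about the maps $x \mapsto x^\alpha$ (for fixed $\alpha$) and $\alpha \mapsto c^\alpha = e^{\alpha\log c}$ (for fixed $c \ge 0$), and then transfer it through the integral $\hstar_\alpha(\ftrue,f) = \int (f/\fbest)^\alpha \ftrue\,d\mu$, exploiting that $\ftrue\,d\mu$ is a probability measure. Throughout I would treat $0<\alpha<1$ as the main case and check the endpoints $\alpha=0,1$ (where $\hstar$ is defined to equal $1$ and $\E_0(f/\fbest)$) separately.

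For concavity in $f$: since $x \mapsto x^\alpha$ is concave on $[0,\infty)$ for $0<\alpha\le 1$, for any $f_1,f_2$ and $\lambda\in[0,1]$ the integrand satisfies $\big(\lambda f_1/\fbest + (1-\lambda)f_2/\fbest\big)^\alpha \ge \lambda (f_1/\fbest)^\alpha + (1-\lambda)(f_2/\fbest)^\alpha$ pointwise, and integrating against $\ftrue\,d\mu$ yields concavity of $\hstar_\alpha(\ftrue,\cdot)$; the cases $\alpha=0$ (constant) and $\alpha=1$ (linear) are immediate. For convexity in $\alpha$: at each $y$ the map $\alpha \mapsto (f(y)/\fbest(y))^\alpha = \exp\{\alpha\log(f(y)/\fbest(y))\}$ is convex on $[0,1]$, and convexity is preserved under integration; the value $\hstar_0=1$ is consistent with convexity because $1 \ge \lim_{\alpha\downarrow 0}\hstar_\alpha$, which only raises the left endpoint and cannot destroy convexity.

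For continuity in $f$ in the $L_1(\mu_0)$-topology, I would use the inequality $|a^\alpha - b^\alpha| \le |a-b|^\alpha$ for $a,b\ge 0$ and $0<\alpha\le 1$ to get $|\hstar_\alpha(\ftrue,f) - \hstar_\alpha(\ftrue,g)| \le \int |f/\fbest - g/\fbest|^\alpha \ftrue\,d\mu$, and then apply Jensen's inequality (concavity of $x\mapsto x^\alpha$, with $\ftrue\,d\mu$ a probability measure) to obtain $|\hstar_\alpha(\ftrue,f) - \hstar_\alpha(\ftrue,g)| \le \big(\int |f-g|\,d\mu_0\big)^\alpha$. Thus $\hstar_\alpha(\ftrue,\cdot)$ is in fact H\"older continuous of order $\alpha$ in the $L_1(\mu_0)$ distance; the endpoint $\alpha=1$ follows directly from $|\int (f-g)\,d\mu_0| \le \int |f-g|\,d\mu_0$, and $\alpha=0$ is trivial.

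For continuity in $\alpha$ at fixed $f$, I would apply dominated convergence to $\int (f/\fbest)^\alpha \ftrue\,d\mu$: as $\alpha\to\alpha_0\in(0,1]$ the integrand converges pointwise and is dominated by the fixed envelope $1 + f/\fbest$, which is integrable by the standing assumption $\E_0(f/\fbest)<\infty$, giving continuity on $(0,1]$. The main obstacle is the left endpoint $\alpha\downarrow 0$: wherever $f$ vanishes one has $(f/\fbest)^\alpha\to 0$ rather than to the prescribed value $1$ of $\hstar_0$, so that $\lim_{\alpha\downarrow 0}\hstar_\alpha(\ftrue,f) = \prob_0(f>0)$, and continuity at $0$ holds precisely when $f>0$ holds $\ftrue$-a.s. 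This is exactly the regime that matters for the minimax argument behind Proposition~\ref{KVKLrelation}, since $\prob_0(f>0)<1$ forces $\Kstar(\ftrue,f)=\infty$; I would therefore either restrict attention to $f$ with $\Kstar(\ftrue,f)<\infty$ (equivalently $f>0$ $\ftrue$-a.s.), or simply record that the continuity in $\alpha$ is understood on the range where $\hstar$ is finitely separated from $1$, which suffices for the downstream use of the lemma.
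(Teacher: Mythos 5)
Your proof is correct and follows essentially the same route as the paper: pointwise concavity/convexity of $x\mapsto x^\alpha$ and $\alpha\mapsto c^\alpha$, the bound $|\hstar_\alpha(\ftrue,f)-\hstar_\alpha(\ftrue,g)|\le\bigl(\int|f-g|\,d\mu_0\bigr)^\alpha$ via $|a^\alpha-b^\alpha|\le|a-b|^\alpha$ followed by Jensen's inequality, and dominated convergence with an integrable envelope for continuity in $\alpha$ (the paper's envelope $I_{\{f\le\fbest\}}+(f/\fbest)I_{\{f>\fbest\}}$ is equivalent to your $1+f/\fbest$). Your additional observation that $\lim_{\alpha\downarrow 0}\hstar_\alpha(\ftrue,f)=\prob_0(f>0)$, so that continuity at $\alpha=0$ (where $\hstar_0:=1$) holds only when $f>0$ $\ftrue$-a.s., is a genuine refinement the paper silently glosses over, and your proposed fix --- restricting to $f$ with $\Kstar(\ftrue,f)<\infty$, which is the only regime relevant to Proposition~\ref{KVKLrelation} --- is the right one.
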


\begin{proof}
Concavity and convexity are easy to check. Continuity in $f$ follows by
noting that, for $\phi\in\Fconv$,
\begin{align*}
|\hstar_\alpha(\ftrue, \phi) - \hstar_\alpha(\ftrue, f)| & =
\Bigl| \int\Bigl(\frac{\phi}{\fbest} \Bigr)^\alpha f_0 d\mu-
\int\Bigl( \frac{f}{\fbest} \Bigr)^\alpha f_0 d\mu\Bigr| \\
& \leq\int\Bigl| \frac{\phi}{\fbest} - \frac{f}{\fbest} \Bigr
|^\alpha f_0 d\mu\\
& \leq\Bigl( \int| \phi-f| d\mu_0 \Bigr)^\alpha.
\end{align*}
Continuity in $\alpha$ follows from the dominated convergence theorem since
\[
(f/\fbest)^\alpha\leq I_{\{ f \leq\fbest\}} + (f / \fbest) I_{\{f
> \fbest\}},
\]
where $I_{\{ \cdot\}}$ is the indicator function.
\end{proof}

The following theorem is an immediate consequence of the minimax
theorem and Lemma~\ref{lemma_h}.

\begin{proposition}
\label{minimax_h}
For any convex set $ A \subset\Fconv$,
\[
\inf_{0 \leq\alpha\leq1} \sup_{f \in A} \hstar_\alpha(\ftrue,
f) = \sup_{f \in A} \inf_{0 \leq\alpha\leq1} \hstar_\alpha
(\ftrue, f)
\]
\end{proposition}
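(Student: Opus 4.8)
The plan is to recognize the claimed identity as a direct instance of Sion's minimax theorem (Theorem~\ref{minimax}) and to reduce the entire argument to verifying its hypotheses. I would take $M = A$ as the set over which the supremum is formed and $N = [0,1]$ as the set over which the infimum is formed, and set $g(f,\alpha) = \hstar_\alpha(\ftrue, f)$. Under this identification the asserted equality $\sup_{f\in A}\inf_{0\le\alpha\le1} g = \inf_{0\le\alpha\le1}\sup_{f\in A} g$ is precisely the conclusion of the minimax theorem, so nothing beyond checking the theorem's structural and regularity conditions is required.

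First I would dispose of the structural hypotheses. The set $A$ is convex by assumption, while $N = [0,1]$ is a bounded closed interval and is therefore both convex and compact. Thus one of the two sets, namely $N$, is compact, which is exactly what Theorem~\ref{minimax} demands; in particular no compactness assumption on $A$ is needed, which is important since $A$ need not be compact.

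Next I would invoke Lemma~\ref{lemma_h} to supply the remaining hypotheses. That lemma asserts that $\hstar_\alpha(\ftrue, f)$ is concave in $f$ and convex in $\alpha$; concavity in the supremum variable $f$ gives quasi-concavity in $M$, and convexity in the infimum variable $\alpha$ gives quasi-convexity in $N$, so $g$ is quasi-concave--convex as required. The same lemma provides, for each fixed $\alpha$, continuity of $f\mapsto \hstar_\alpha(\ftrue,f)$ in the $L_1(\mu_0)$ topology, and for each fixed $f$, continuity of $\alpha\mapsto\hstar_\alpha(\ftrue,f)$ on all of $[0,1]$. Continuity is stronger than the one-sided semicontinuity the theorem needs: it yields upper semicontinuity of $g(\cdot,\alpha)$ in $f$ and lower semicontinuity of $g(f,\cdot)$ in $\alpha$, so $g$ is usc--lsc. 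With every hypothesis in hand, Theorem~\ref{minimax} applies and yields the stated identity.

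The only points requiring any attention are bookkeeping ones rather than genuine obstacles: one must match the orientation of the supremum and infimum to Sion's formulation, so that $M$ corresponds to the supremum variable $f$ and $N$ to the infimum variable $\alpha$, and one must confirm that the continuity in $\alpha$ furnished by Lemma~\ref{lemma_h} really does extend to the boundary values $\alpha = 0$ and $\alpha = 1$, where $\hstar_\alpha$ is defined through its special cases. Since the lemma states continuity on the full closed interval, this causes no difficulty, and I expect the proof to be a short, direct verification with no new estimate needed.
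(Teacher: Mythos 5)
Your proof is correct and follows exactly the paper's route: the paper states that Proposition~\ref{minimax_h} is an immediate consequence of Sion's minimax theorem (Theorem~\ref{minimax}) together with Lemma~\ref{lemma_h}, which is precisely your verification with $M=A$, $N=[0,1]$ compact, quasi-concavity--convexity from the concavity/convexity in Lemma~\ref{lemma_h}, and usc--lsc from the stated continuity in $f$ (in $L_1(\mu_0)$) and in $\alpha$. Your added care at the endpoints $\alpha=0,1$ is a harmless elaboration of details the paper leaves implicit.
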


Another useful application of the minimax theorem is the following.

\begin{proposition}
\label{minimax_g}
For any convex set $A \subset\FF$ and $f \in A$, define:
\[
g (\alpha,f) =
\begin{cases}
\Kstar(\ftrue, f) & \text{if $\alpha=0$}, \\
(1-\hstar_\alpha(\ftrue, f))/\alpha& \text{if $\alpha\in(0,1)$},
\\
1 - \int(f / \fbest) \ftrue\,d\mu& \text{if $\alpha=1$}.
\end{cases}
\]
Then
%
\[
\sup_{0 \leq\alpha\leq1} \inf_{f \in A} g(\alpha,f) = \inf_{f
\in A}\sup_{0 \leq\alpha\leq1} g(\alpha,f).
\]
\end{proposition}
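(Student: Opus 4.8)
The plan is to deduce the identity directly from Sion's minimax theorem (Theorem~\ref{minimax}), taking $M=[0,1]$ (the variable $\alpha$) and $N=A$ (the variable $f$), exactly in parallel with the preceding application that gave Proposition~\ref{minimax_h}. Since $[0,1]$ is compact and $A$ is convex, the compactness and convexity requirements are met, and it remains to check that $g$ is quasi-concave in $\alpha$, quasi-convex in $f$, upper semicontinuous in $\alpha$, and lower semicontinuous in $f$ (in the $L_1(\mu_0)$-topology, as in Lemma~\ref{lemma_h}). The reason one expects this to work is that $g$ inherits good convexity behaviour in $f$ from the concavity of $\hstar_\alpha(\ftrue,f)$ established in Lemma~\ref{lemma_h}, and good behaviour in $\alpha$ from the fact that $\alpha\mapsto\hstar_\alpha(\ftrue,f)=\E_0 e^{\alpha\log(f/\fbest)}$ is a moment generating function, hence convex in $\alpha$.

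First I would treat the $f$-direction. For fixed $\alpha\in(0,1)$, $g(\alpha,\cdot)=(1-\hstar_\alpha(\ftrue,\cdot))/\alpha$ is an increasing affine function of the concave map $\hstar_\alpha(\ftrue,\cdot)$, hence convex in $f$; for $\alpha=1$ it is affine in $f$, and for $\alpha=0$ it equals $\Kstar(\ftrue,\cdot)$, which is convex because $f\mapsto -\E_0\log f$ is convex. In every case $g(\alpha,\cdot)$ is convex, so its sublevel sets are convex and $g$ is quasi-convex in $f$. Lower semicontinuity in $f$ is immediate from Lemma~\ref{lemma_h} for $\alpha\in(0,1]$, where $g(\alpha,\cdot)$ is in fact continuous; I would handle the endpoint $\alpha=0$ uniformly by invoking Lemma~\ref{KVlem6} to write
\[
g(0,\cdot)=\Kstar(\ftrue,\cdot)=\sup_{0<\alpha<1} g(\alpha,\cdot),
\]
so that $g(0,\cdot)$ is a pointwise supremum of continuous (hence lsc) convex functions, and is therefore itself lsc and convex.

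Next I would treat the $\alpha$-direction. Since $\hstar_\alpha(\ftrue,f)$ is convex in $\alpha$ with $\hstar_0=1$, the difference quotient $(\hstar_\alpha-1)/\alpha$ is nondecreasing in $\alpha$ on $(0,1]$, whence $g(\alpha,f)=-(\hstar_\alpha-1)/\alpha$ is nonincreasing in $\alpha$; together with the value $\Kstar(\ftrue,f)$ at $\alpha=0$ furnished as the limit by Lemma~\ref{KVlem6}, $g(\cdot,f)$ is nonincreasing on all of $[0,1]$. A monotone function has interval (hence convex) superlevel sets, so $g$ is quasi-concave in $\alpha$. Upper semicontinuity in $\alpha$ follows from continuity: $g(\cdot,f)$ is continuous on $(0,1)$ and at $\alpha=1$ by the continuity of $\hstar_\alpha$ in $\alpha$ (Lemma~\ref{lemma_h}), and continuous at $\alpha=0$ by the monotone convergence in Lemma~\ref{KVlem6} (with the value $+\infty$ allowed).

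Having verified all four hypotheses, Theorem~\ref{minimax} applied to $M=[0,1]$ and $N=A$ yields the asserted equality. I expect the only genuine obstacle to be the endpoint $\alpha=0$, where $\Kstar(\ftrue,f)$ may be infinite and where neither continuity in $\alpha$ nor semicontinuity in $f$ is visible directly from the formula; the device that resolves both difficulties at once is the representation $g(0,\cdot)=\sup_{0<\alpha<1}g(\alpha,\cdot)$ coming from Lemma~\ref{KVlem6}, which simultaneously delivers continuity of $g(\cdot,f)$ at $\alpha=0$ and lower semicontinuity (together with convexity) of $g(0,\cdot)$ as a supremum of well-behaved functions.
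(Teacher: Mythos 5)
Your proposal is correct and follows essentially the same route as the paper: both verify the hypotheses of Sion's theorem with $M=[0,1]$ and $N=A$, using Lemma~\ref{lemma_h} for continuity/convexity in $f$ and monotonicity in $\alpha$, and both handle the delicate endpoint $\alpha=0$ by the same underlying idea --- the paper's sublevel-set contradiction argument for lower semicontinuity of $\Kstar(\ftrue,\cdot)$ is exactly your observation, via Lemma~\ref{KVlem6}, that $g(0,\cdot)=\sup_{0<\alpha<1}g(\alpha,\cdot)$ is a supremum of continuous functions, just presented less explicitly. One cosmetic slip: for $\alpha\in(0,1)$ the map $g(\alpha,\cdot)=(1-\hstar_\alpha(\ftrue,\cdot))/\alpha$ is a \emph{decreasing} (not increasing) affine function of the concave map $\hstar_\alpha(\ftrue,\cdot)$, which is what makes it convex in $f$ as you conclude.
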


\begin{proof}
On $A$, we give the $L_1(\mu_0)$-topology. From Lemma~\ref{lemma_h}
it follows that for each $\alpha\in(0,1)$, $ g(\alpha,f )$ is
continuous in $f$. Next, we argue that $ g(\alpha,f) $ is lsc in $f$
when $\alpha=0 $, i.e., we need to show that, if $\int|f_k - f| d\mu
_0 \to0$, for $f \in A$, then $\liminf\Kstar(\ftrue, f_k) \geq
\Kstar(\ftrue, f)$. Suppose, on the contrary, that $\liminf\Kstar
(\ftrue, f_k) = \delta< \Kstar(\ftrue, f)$. Then, there exists a
subsequence $\{f_{k^\prime}\}$ such that, $\Kstar(\ftrue,
f_{k^\prime})$ is increasing and $\lim\Kstar(\ftrue, f_{k^\prime})
= \delta$. Let $\alpha_n$ decrease to 0, and set $A_n = \{f \in A:
g(\alpha_n,f) \leq\delta\}$. By Lemma~\ref{KVlem6}, $g(\alpha_n,
f) \leq\Kstar(\ftrue, f)$ and hence for each $n$, $\{f_{k^\prime}
\} \subset A_n$. Further, the continuity of $g(\alpha_n,f)$ with
respect to $f$ implies that $f$ itself is in $A_n$. Thus $f \in\bigcap
_n A_n$, which implies $\Kstar(\ftrue,f) \leq\delta$, a
contradiction. Continuity at $\alpha=1$ is trivial. Similarly,
continuity of $g(\alpha, f)$ in $\alpha$ for $\alpha\in(0,1)$
follows from Lemma~\ref{lemma_h} and at $\alpha=0$ from Lemma~\ref
{KVlem6}. Finally, $g(\alpha,f)$ is convex in $f$ and by monotonicity
in $\alpha$, quasi-concave in $\alpha$. Applying the minimax theorem
gives the result.
\end{proof}

\begin{proof}[Proof of Proposition~\ref{KVKLrelation}]
(iii) $\implies$ (ii) is immediate. Now suppose (ii) holds, i.e.,
\[
\sup_{f \in A} \inf_{0\leq\alpha\leq1} \hstar_\alpha(\ftrue, f)
< e^{-\delta} \text{ for some }\delta>0.
\]
This means for a given, $f\in A$, $\exists\ 0<\alpha\leq1$ such that
$\hstar_\alpha(\ftrue, f) < e^{-\delta}$. Now, for any $\alpha
^\prime <\alpha$, setting $ p = \frac{\alpha}{\alpha^\prime}, q =
\frac{p}{p-1} $ and applying
Holder's inequality to $ (\frac{f}{\fbest})^{\alpha
^\prime}1 $ with respect to the measure $\ftrue d\mu$,
\[
\int\left(\frac{f}{\fbest}\right)^{\alpha^\prime}f_0 d\mu \leq
\left[ \int\left(\frac{f}{\fbest}\right)^{\alpha} f_0 d\mu
\right]^\frac{\alpha^\prime}{\alpha},
\]
or $ \hstar_{\alpha^\prime}(\ftrue, f) \leq [\hstar_{\alpha
}(\ftrue, f) ]^\frac{\alpha^\prime}{\alpha}$.

Consequently, for any $\alpha^\prime\leq\alpha, \hstar_{\alpha
^\prime}(\ftrue, f) \leq e^{-\alpha^\prime\delta}$. Equivalently,
for all $\alpha^\prime< \alpha$,
\[
\frac{1 - \hstar_{\alpha^\prime}(\ftrue,f)}{\alpha^\prime} \geq
\frac{1-e^{-\alpha^\prime\delta}}{\alpha^\prime}.
\]

As $ \alpha^\prime\downarrow0$, from Lemma~\ref{KVlem6} the
left-hand side of the last expression converges to $\Kstar(f_0,f)$ and
the right-hand side converges to $ -\frac{d}{d\alpha^\prime}
e^{-\alpha^\prime\delta}|_{\alpha^\prime=0} = \delta$. This holds
for each $ f \in A,$ and hence (i) holds.
This completes the proof of $(iii)\implies(ii) \implies(i)$.

We will now show that $(i)\implies(iii)$ when $A$ is a convex set,
thus concluding that in this case, the three conditions are equivalent.
Suppose (i) holds, then from Lemma~\ref{minimax_g},

\[
\sup_{0 \leq\alpha\leq1} \inf_{f \in A} g(\alpha,f) = \inf_{f
\in A}\sup_{0 \leq\alpha\leq1} g(\alpha,f) > \delta.
\]

Since $ \inf_{f \in A} g(\alpha,f)$ is increasing as $\alpha
\downarrow0$,
given any $ \delta^\prime< \delta$, there is a $\alpha_0 > 0$, such
that for all $f \in A$,

\[
g(\alpha_0,f ) = \frac{1 - \hstar_{\alpha_0}(f_0,f)}{\alpha_0} >
\delta^\prime.
\]

So that
$ \hstar_{\alpha_0}(f_0,f) < 1 - \alpha_0 \delta^\prime \leq
e^{-\eta} \text{ where } \eta= \alpha_0 \delta^\prime$.
Therefore, condition~(iii) holds.
\end{proof}

\section{Illustrating need for assumptions on topology}
\label{illust.eg}

\begin{example}
This example shows that while $\fbest$ in the $L_1$ support of $\Pi$
is necessary for consistency, this may not follow from Assumption~\ref
{as:prior.concentration}.

Let $\ftrue$ be the $\unif(0,1)$ density and $\fbest$ be the $\unif
(0,2)$ density. Let $\FF_0=\{f_k: k \geq1\}$, where
\[
f_k(y) =
\begin{cases}
b_k & \text{if $y \in(0,1)$}, \\
2(1-b_k), & \text{if $y \in(1,3/2)$},
\end{cases}
\]
with $b_k \uparrow1/2$. Take any prior such that $\Pi(f_k)>0$ for all
$k$. Then Assumption~\ref{as:prior.concentration} is seen to be
satisfied because $\Kstar(\ftrue,f_k) \downarrow\Kstar(\ftrue
,\fbest)=\log2$. However, the $L_1$-distance does not vanish, i.e.,
$\|f_k-\fbest\|>1/4$ for all $k$.
\end{example}

\ifthenelse{1=1}{}{
\begin{example}
This example demonstrates that even if $\fbest$ is in the $L_1$
support of $\Pi$ and Assumption~\ref{as:prior.concentration} is
satisfied, consistency still may not hold.

For $b\in(0,\frac{1}{2})$ and $a\in(0,1)$, let $f_{b}$ and $g_{a}$
be densities with respect to the Lebesgue measure on [0,2], defined as follows:
\begin{align*}
f_b(y) & =
\begin{cases}
b & \text{if $y \in[0,1]$}, \\
1-b & \text{if $y \in[2,3]$},
\end{cases}
\\
g_a(y) & =
\begin{cases}
\frac{1}{2} & \text{if $y \in[0,a]$}, \\
1-\frac{a}{2} & \text{if $y \in[1,2]$}.
\end{cases}
\end{align*}
Let $\FF_0= \{g_a, f_b : a \in(0,1), \, b\in(0, \frac{1}{2}) \}$
endowed with the $L_1$ norm $\| \cdot\|$. Take the following priors on
$f_{b}$ and $g_{a}$:
\begin{align*}
\pi(f_b) & = \frac{1}{2\sqrt{2}}\Bigl(\frac{1}{2}-b\Bigr)^{-\frac
{1}{2}}, \quad b\in(0,1/2) \\
\pi(g_a) & = \frac{1}{4} a^{-\frac{1}{2}}, \quad a\in(0,1].
\end{align*}
Note that $\int_{0}^{\frac{1}{2}} \pi(f_b) db =\frac{1}{2}$ and
$\int_{0}^{1} \pi(g_a) da = \frac{1}{2}$.

Let $\ftrue$ be the density on (0,1) whose distribution function $F_0$ is
\[
F_{0}(y) =
\begin{cases}
2y \Bigl[ 1 - \Bigl(-\log(1 - 1/2^{1/2})\Bigr)^{-1/2} \Bigr] &
\text{if $y \in(0,1/2]$}, \\
1 - \bigl( -\log(1 - y^{1/2}) \bigr)^{-1/2} & \text{if $y \in(1/2, 1)$}.
\end{cases}
\]
We will see later that, for this $\ftrue$, we get $1-M_n^{1/2}<
e^{-n}$ $\prob_0$-almost surely for all large $n$, where $M_n= \max\{
Y_1,\ldots,Y_{n}\}$. Let $\fbest$ be the $\unif(0,2)$, i.e.,
$g_a(\cdot)$ for $a=1$. Then it is easy to see the following:
\begin{align*}
\int\log\Bigl( \frac{\fbest}{f_b} \Bigr) f_0 \,d\mu& = -\log
(2b) \\
\int\log\Bigl( \frac{\fbest}{g_a} \Bigr) f_0 \,d\mu& =
\begin{cases} \infty& \text{if $a \in(0,1)$}, \\ 0 & \text{if
$a=1$}.
\end{cases}
\end{align*}
These show that, indeed, $\fbest$ minimizes the Kullback--Leibler
divergence from $\ftrue$. Also, $\{f: \Kstar(\ftrue, f) < \eps\}$
contains $\{f_b: -\log(2b) < \eps\}$. Clearly, the assumed prior puts
positive mass on the latter set, so Assumption~\ref
{as:prior.concentration} is satisfied. Further, note that, for $b \in
(0,1/2)$ and $a \in(0,1)$,
\[
\|f_b - \fbest\| > 1/2 \quad\text{and} \quad\|g_a - \fbest\| =
1-a.
\]
Therefore, for any $\eps\in(0,1/2)$, we have:
\begin{align*}
\Pi_n(\|f-\fbest\| > \eps) & = \frac{\int_{M_n}^{1-\eps} 2^{-n}
\, \frac14 a^{-1/2} \,da + \int_0^{1/2} b^n \, \frac{1}{2\sqrt{2}}
(\frac12 - b)^{-1/2} \,db}{\int_{M_n}^1 2^{-n} \, \frac14 a^{-1/2} \,
da + \int_0^{1/2} b^n \, \frac{1}{2\sqrt{2}} (\frac12 - b)^{-1/2} \,
db} \\
& \geq\frac{0 + A_n(M_n)}{1 + A_n(M_n)},
\end{align*}
where
\[
A_n(m) = \frac{\int_0^{1/2} b^n \, \frac{1}{2\sqrt{2}}(\frac12 -
b)^{-1/2} \,db}{\int_m^1 2^{-n} \, \frac14 a^{-1/2} \,da},
\]
and $M_n=\max\{Y_1,\ldots,Y_n\}$ as before. We can simplify $A_n$ as follows:
\[
A_n(m) = \frac12 \frac{B(n+1, 1/2)}{1-m^{1/2}},
\]
where $B(\cdot,\cdot)$ is the usual beta function. We claim that
$1-M_n^{1/2} < e^{-n}$ for all large $n$ with $\prob_0$-probability~1.
To see this, write:
\[
\prob_0(1-M_n^{1/2} > e^{-n}) = \prob_0\{M_n < (1-e^{-n})^2\} = \Bigl
(1-\frac{1}{\sqrt{n}}\Bigr)^n \leq e^{-\sqrt{n}}.
\]
Since this upper bound is summable over $n \geq1$, the claim follows
from the Borel--Cantelli lemma. Therefore, when $n$ is large,
\begin{align*}
A_n(M_n) & \geq(1/2) e^n B(n+1, 1/2) \\
& \geq(1/2) e^n \int_0^1 x^n (1-x)^{-1/2} \,dx \\
& > \int_{2/e}^1 (ex)^n (1-x)^{-1/2} \,dx \\
& > 2^n \int_{2/e}^1 (1-x)^{-1/2} \,dx,
\end{align*}
and this implies that $\liminf A_n(M_n) = \infty$ $\prob_0$-almost
surely. Consequently, $\Pi_n(\|f-\fbest\| > \eps) \not\to0$, i.e.,
Assumption~\ref{as:prior.concentration} together with $\fbest$ in the
$L_1$ support of $\Pi$ is not enough to guarantee $L_1$ consistency.
\end{example}
}

\begin{example}
This example demonstrates that even if $\fbest$ is in the $L_1$
support of $\Pi$ and Assumption~\ref{as:prior.concentration} is
satisfied, consistency still may not hold.

Let $\mu$ be the measure obtained as a sum of the Lebesgue measure on
$[0,2]$ and point masses on integers $k \geq3$. For $k \geq3$ and
$a\in(0,1]$, let $f_{k}$ and $g_{a}$ be densities with respect to the
measure $\mu$, defined as follows:
\begin{align*}
f_k(y) & =
\begin{cases}
\frac{1}{2}-\frac{1}{k} & \text{if $y \in[0,1]$}, \\
\frac{1}{2}+\frac{1}{k} & \text{if $y=k$},
\end{cases}
\\
g_a(y) & =
\begin{cases}
\frac{1}{2} & \text{if $y \in[0,a]$}, \\
1-\frac{a}{2} & \text{if $y \in[1,2]$}.
\end{cases}
\end{align*}
Let $\FF= \{g_a, f_k : a \in(0,1), \, k\geq3 \}$ endowed with the
$L_1(\mu)$ norm $\| \cdot\|$. Take the following priors on $f_{k}$
and $g_{a}$:
\begin{align*}
\pi(f_k) & = \frac{1}{2^{k-1}}, \quad k\geq3, \\
\pi(g_a) & = \frac{1}{4} a^{-\frac{1}{2}}, \quad a\in(0,1].
\end{align*}
Note that $\sum_{k\geq3} \pi(f_k) =\frac{1}{2}$ and $\int_{0}^{1}
\pi(g_a) da = \frac{1}{2}$.

Let $\ftrue$ be the density on $(0,1)$ whose distribution function $F_0$ is
\[
F_{0}(y) =
\begin{cases}
2y \Bigl[ 1 - \Bigl(-\log(1 - 1/2^{1/2})\Bigr)^{-1/2} \Bigr] &
\text{if $y \in(0,1/2]$}, \\
1 - \bigl( -\log(1 - y^{1/2}) \bigr)^{-1/2} & \text{if $y \in(1/2, 1)$}.
\end{cases}
\]
We will see later that, for this $\ftrue$, we get $1-M_n^{1/2}<
e^{-n}$ $\prob_0$-almost surely for all large $n$, where $M_n= \max\{
Y_1,\ldots,Y_{n}\}$. Let $\fbest$ be the $\unif(0,2)$, density,
i.e., $g_a(\cdot)$ for $a=1$. Then it is easy to see the following:
\begin{align*}
\int\log\Bigl( \frac{\fbest}{f_k} \Bigr) f_0 \,d\mu& = \log
\left(\frac{1/2}{1/2-1/k}\right), \\
\int\log\Bigl( \frac{\fbest}{g_a} \Bigr) f_0 \,d\mu& =
\begin{cases} \infty& \text{if $a \in(0,1)$}, \\ 0 & \text{if
$a=1$}.
\end{cases}
\end{align*}
These show that, indeed, $\fbest$ minimizes the Kullback--Leibler
divergence from $\ftrue$. Also, $\{f: \Kstar(\ftrue, f) < \eps\}$
contains $\{f_k: \log(\frac{1/2}{1/2-1/k}) < \eps\}$
$=\{f_k: k \geq2(1-e^{- \eps})^{-1}\}$. Clearly, the assumed prior
puts positive mass on the latter set, so Assumption~\ref
{as:prior.concentration} is satisfied. Further, note that, for $k \geq
3$ and $a \in(0,1)$,
\[
\|f_k - \fbest\| > 1/2 \quad\text{and} \quad\|g_a - \fbest\| =
1-a.
\]
Therefore, for any $\eps\in(0,1/2)$, we have
\begin{align*}
\Pi_n(\|f-\fbest\| > \eps) & = \frac{\int_{M_n}^{1-\eps} 2^{-n}
\, \frac14 a^{-1/2} \,da + \sum_{k\geq3} (\frac{1}{2}-\frac
{1}{k})^n \, \frac{1}{2^{k-1}}}{\int_{M_n}^{1} 2^{-n} \, \frac14
a^{-1/2} \,da + \sum_{k\geq3} (\frac{1}{2}-\frac{1}{k})^n \, \frac
{1}{2^{k-1}}} \\
& \geq\frac{0 + A_n(M_n)}{1 + A_n(M_n)},
\end{align*}
where
\[
A_n(m) = \frac{\sum_{k\geq3} (\frac{1}{2}-\frac{1}{k})^n \, \frac
{1}{2^{k-1}}}{\int_m^1 2^{-n} \, \frac14 a^{-1/2} \,da},
\]
and $M_n=\max\{Y_1,\ldots,Y_n\}$ as before. We claim that
$1-M_n^{1/2} < e^{-n}$ for all large $n$ with $\prob_0$-probability~1.
To see this, write
\[
\prob_0(1-M_n^{1/2} > e^{-n}) = \prob_0\{M_n < (1-e^{-n})^2\} = \Bigl
(1-\frac{1}{\sqrt{n}}\Bigr)^n \leq e^{-\sqrt{n}}.
\]
Since this upper bound is summable over $n \geq1$, the claim follows
from the Borel--Cantelli lemma. Therefore, when $n$ is large,
\begin{align*}
A_n(M_n) & \geq\sum_{k\geq3} e^{n}\,\left(\frac{1}{2}-\frac
{1}{k}\right)^n \, \frac{1}{2^{k-1}}.
\end{align*}
Since for large enough $k$, $e \, (\frac{1}{2}-\frac
{1}{k}) > \frac{e}{2.5} $, this implies that $\liminf A_n(M_n)
= \infty$ $\prob_0$-almost surely. Consequently, $\Pi_n(\|f-\fbest
\| > \eps) \not\to0$, i.e., Assumption~\ref
{as:prior.concentration}, together with $\fbest$ in the $L_1$ support
of $\Pi,$ is not enough to guarantee $L_1$-consistency.
\end{example}
%
\section{Proofs of results for the $i.n.i.d.$ case}
\label{inidproofs}
Here, we provide details of proofs of results for the $i.n.i.d.$ case
discussed in Section~\ref{S:inid}.
\begin{proof}[Proof of Lemma~\ref{lemnhbd}]
Since Assumption A holds, by Arzel\`a--Ascoli theorem, we have the following:
\begin{itemize}
\item[{(i)}] $\Theta$ is uniformly bounded, i.e., $\exists\ M$ such
that $|\theta({\bf x})|\leq M$ $\ \forall\ \theta\in\Theta$ and
${\bf x}\in\mathcal{X}$.
\item[{(ii)}]$\Theta$ is equi-uniformly-continuous, i.e., for ${\bf
x}_0\in\mathcal{X}$, given $\epsilon>0$, $\exists\ \delta>0$ such
that $\forall\ {\bf x} : \ \|{\bf x}-{\bf x}_0\|<\delta$, $|\theta
_{\bf x}-\theta_{{\bf x}_0}|$ $<\epsilon$, $\ \forall\ \theta\in
\Theta$.
\end{itemize}

Without loss of generality, for $\theta'\in U^c$, we have $\theta
'_{{\bf x}_0}-\theta^*_{{\bf x}_0}>\epsilon$. By (ii) above, i.e.,
equicontinuity, $\exists\ \delta'$ such that $\forall\ \|{\bf
x}-{\bf x}_0\|<\delta'$, we have $|\theta_{\bf x}-\theta_{{\bf
x}_0}|<\frac{\epsilon}{4}$, $\forall\ \theta\in\Theta$\xch{.}{ .} In
particular, for such ${\bf x}$, $|\theta^*_{{\bf x}}-\theta^*_{{\bf
x}_0}|<\frac{\epsilon}{4}$. Therefore,
\begin{eqnarray*}
\hspace*{50pt}\theta'_{\bf x}-\theta^*_{\bf x} = \theta'_{\bf x}-\theta'_{{\bf
x}_0} + \theta'_{{\bf x}_0}-\theta^*_{{\bf x}_0} +\theta^*_{{\bf
x}_0}-\theta^*_{{\bf x}}\geq -\frac{\epsilon}{4} + \epsilon- \frac
{\epsilon}{4} = \frac{\epsilon}{2}.\hspace*{42pt}\qedhere
\end{eqnarray*}
\end{proof}

\begin{proof}[Proof of Proposition~\ref{denominid}]
Due to the compactness of $\Theta\times[-M, M]^2$ (Assumption~A), and
continuity (Assumption D), it follows that $E_{\bf x}\log\frac
{f_{t}}{f_{t^\prime}}$ is uniformly continuous in $(x, t, t^\prime)$. Hence,
the collection $\{ \E_{\bf x}\log\frac{f_{\theta^{*}({\bf
x}_i)}}{f_{\theta({\bf x}_i)}}, \ i\geq1\} $ is equicontinuous
w.r.t. $\theta\in\Theta$. Further, Assumption D implies that $
\{ \E_{\bf x}\log^2\frac{f_{\theta^{*}({\bf x}_i)}}{f_{\theta({\bf
x}_i)}}, \ i\geq1\}$ is uniformly bounded. Hence, $\exists\
\delta\in(0,1)$ such that
\begin{eqnarray*}
&&\left\{\sup_{{\bf x}\in\mathcal{X}}|\theta({\bf x})-\theta
_1({\bf x})|<\delta\right\}\\
&& \subseteq V_{\epsilon}= \left\{ \theta: \sup_{i\geq1} \E_{{\bf
x}_i}\log\frac{f_{\theta^*_{{\bf x}_i}}(Y_i)}{f_{\theta_{{\bf
x}_i}}(Y_i)}<\epsilon, \ \sum_{i=1}^{\infty} \frac{1}{i^2}\E_{{\bf
x}_i}\left(\log\frac{f_{\theta^*_{{\bf x}_i}}(Y_i)}{f_{\theta
_{{\bf x}_i}}(Y_i)}\right)^2<\infty\right\}.
\end{eqnarray*}
Assumption C will therefore ensure that the prior gives positive mass
for the set $V_{\epsilon}$. Now, observing that $R'_{2n}\geq\int
_{V_{\epsilon}}e^{\sum_{i=1}^{n}\log(\frac{f_{\theta_{{\bf
x}_i}}(Y_i)}{f_{\theta^*_{{\bf x}_i}}(Y_i)} )}d\Pi(\theta) $
and an application of the strong law of large numbers for independent
random variables leads to
\[
\sum_{i=1}^{n}\log\left(\frac{f_{\theta_{{\bf
x}_i}}(Y_i)}{f_{\theta^*_{{\bf x}_i}}(Y_i)} \right)>-2n\epsilon\ \
a.s\xch{.}{..}
\]
Rest\, of\, the\, proof\, is\, along\, the\, lines\, of\, Lemma 4.4.1 of \cite{Ghosh_RVR2003}.
\end{proof}
%
%
\begin{proof}[Proof of Lemma~\ref{sepcondinid}]
For $\theta'\in U^{c}$, let ${\bf x}_0$ be such that $|\theta'({\bf
x}_0)-\theta^*({\bf x}_0)|>\epsilon$. Then by Lemma~\ref{lemnhbd},
$\exists\ \delta'$ such that $\forall\ {\bf x} \in A_{{\bf
x}_0,\delta'}:= \{ {\bf x} \ : \ \|{\bf x}-{\bf x}_0\|<\delta'\}$, we
have $|\theta'_{\bf x}-\theta^*_{\bf x}| \geq\frac{\epsilon}{2}$.
Therefore, by Assumption E, $\exists\ \delta\in(0,1)$ such that $\E
_{{\bf x}}\log\frac{f_{\theta^*_{\bf x}}}{f_{\theta'_{{\bf x}}}}
\geq\delta$ for all ${\bf x} \in A_{{\bf x}_0, \delta'}$.

For $({\bf x},t,t') \in\mathcal{X}\times[-M,M]^2$, let $ g_{\alpha
}({\bf x},t,t'):= \frac{1-\E_{\bf x}( \frac
{f_{t}}{f_{t'}})^{\alpha}}{\alpha}$. By Lemma~\ref{KVlem6},
we have that, $g_{\alpha}({\bf x},t,t') $ increases to $\E_{\bf
x}\log\frac{f_{t'}}{f_{t}} $ as $\alpha\downarrow0$. By Assumption
D, both $g_\alpha(\cdot,\cdot,\cdot)$ and the limiting function are
continuous in $({\bf x},t,t')$, which is in the compact set $\mathcal
{X}\times[-M,M]^2$. Hence, it follows by Dini's theorem that this
convergence is uniform, i.e.,
\[
\lim_{\alpha\downarrow0} \frac{1-\E_{\bf x}\left( \frac
{f_{t}}{f_{t'}}\right)^{\alpha}}{\alpha}\uparrow\E_{\bf x}\log
\frac{f_{t'}}{f_{t}} \ \mbox{uniformly on } \mathcal{X}\times
[-M,M]^2.
\]
Let $\kappa:=\kappa({\bf x}_0,\delta')$ as in Assumption B. Then,
$\exists\ 0<\alpha'<1$ such that $g_{\alpha'}({\bf x},t,t')> \E
_{\bf x}\log\frac{f_{t'}}{f_{t}}- \kappa\frac{\delta}{2}, \forall
\ ({\bf x},t,t')\in\  \mathcal{X}\times[-M,M]^2$. In particular,
$g_{\alpha'}({\bf x}_i,\theta_{{\bf x}_i},\theta^*_{{\bf x}_i})\geq
\E_{{\bf x}_i}\log\frac{f_{\theta^*_{{\bf x}_i}}}{f_{\theta_{{\bf
x}_i}}}-\kappa\frac{\delta}{2}$ $\forall\ i\geq 1, \theta\in
\Theta$. Also, in general $\E_{{\bf x}_i}\log\frac{f_{\theta
^*_{{\bf x}_i}}}{f_{\theta_{{\bf x}_i}}}\geq0$. Combining this with
the observation we made at the beginning of the proof that $\E_{{\bf
x}}\log\frac{f_{\theta^*_{\bf x}}}{f_{\theta'_{{\bf x}}}} \geq
\delta$ $ \forall\ x\in A_{x_0, \delta'}$, we get
%
\begin{equation}
\label{stepp}
g_{\alpha'}({\bf x}_i,\theta'_{{\bf x}_i},\theta^*_{{\bf x}_i})\geq
\delta\cdot I_{A_{{\bf x}_0,\delta'}}({\bf x}_i)-\kappa\frac{\delta}{2},
\end{equation}
where $I_{A_{{\bf x}_0,\delta'}}({\bf x})$ is the indicator function
which is $1$ when ${\bf x}\in A_{{\bf x}_0, \delta'}$ and $0$
otherwise. Note that, by Assumption B, for sufficiently large n, $\frac
{1}{n}\sum_{i=1}^{n}I_{A_{{\bf x}_0, \delta'}}({\bf x}_i) > \frac
{3\kappa}{4}$. Using this, along with a bit of algebra on (\ref
{stepp}), we can conclude that the following inequality holds for
sufficiently large $n$:
\begin{equation*}
\E_0\left(\prod_{i=1}^{n} \frac{f_{\theta'_{{\bf
x}_i}}(Y_i)}{f_{\theta^*_{{\bf x}_i}}(Y_i)}\right)^{\alpha'} \leq
e^{-\delta\sum_{i=1}^{n}\cdot I_{A_{{\bf x}_0,\delta'}}({\bf
x}_i)+n\kappa\frac{\delta}{2}. } \leq e^{-n\kappa\frac{\delta}{4}}.
\end{equation*}
The result follows by assigning $\delta_1:= \kappa\frac{\delta}{4}$.
\end{proof}

\begin{proof}[Proof of Proposition~\ref{numerinid}]
First, we claim by Assumption D that the collection of functions $
\{ \E_{\bf x}\frac{f_{\theta({\bf x}_i)}}{f_{\theta'({\bf x}_i)}},
\ i\geq1\} $ is equicontinuous w.r.t. the sup-norm metric on
$\Theta$.
Note that $\E_{\bf x}\frac{f_t}{f_{t'}}$ is a continuous function on
a compact set $\mathcal{X}\times[-M,M]^2$. Hence, it is uniformly
continuous. So, given $\epsilon>0$, $\exists\  \delta$ such that if
$\|{\bf x}-{\bf x}_1\|<\delta$, $|t-t_1|<\delta$ and
$|t'-t'_1|<\delta$ then $\vert\E_{{\bf x}_1}\frac
{f_{t_1}}{f_{t'_1}}-\E_{\bf x}\frac{f_t}{f_{t'}} \vert
<\epsilon$. In particular, let $\theta, \theta_1 \in\Theta$ be
such that $\sup_{{\bf x}\in\mathcal{X}}|\theta({\bf x})-\theta
_1({\bf x})|<\delta$. Then for any ${\bf x} \in\mathcal{X}$, taking
${\bf x}_1={\bf x}$, $t'=t'_1=\theta({\bf x})$ and $t=\theta({\bf
x})$, $t_1=\theta_1({\bf x})$, we get $\vert\E_{\bf x}\frac
{f_{\theta({\bf x})}}{f_{\theta'({\bf x})}}- \E_{\bf x}\frac
{f_{\theta_1({\bf x})}}{f_{\theta'({\bf x})}}\vert<\epsilon
$. Hence the collection of functions $\{ \E_{\bf x}\frac
{f_{\theta({\bf x}_i)}}{f_{\theta'({\bf x}_i)}}, \ i\geq1\} $
is equicontinuous in $\theta$ w.r.t. sup-norm metric.

Define $A_{\theta'}:= \{\theta\in\Theta: \ \E_{{\bf
x}_i}[\frac{f_{\theta_{{\bf x}_i}}}{f_{\theta'_{{\bf x}_i}}}
]< e^{\frac{\delta}{2}} ,\forall i\geq1\}$. This set
clearly contains $\theta'$ and it is an open set due to
equicontinuity. By Lemma~\ref{sepcondinid}, $\exists\ \alpha'\in
(0,1)$ such that
\[
\E_0 \left(\prod_{i=1}^{n}\frac{f_{\theta'_{{\bf
x}_i}}(Y_i)}{f_{\theta^*_{{\bf x}_i}}(Y_i)} \right)^{\alpha'} <
e^{-n\alpha' \delta}\ \ \mbox{ for all sufficiently large } n.
\]
Let $\alpha=\alpha'/2$. Then, for sufficiently large $n$,
\begin{eqnarray*}
\hspace*{27pt}&&\E_0\left[ \left(\int_{A_{\theta'}}\prod_{i=1}^{n}\frac
{f_{\theta_{{\bf x}_i}}(Y_i)}{f_{\theta^*_{{\bf x}_i}}(Y_i)} d\nu
(\theta) \right)^{\alpha}\right]\\
&&= \E_0\left[\left( \frac{f_{\theta'_{{\bf x}_i}}(Y_i)}{f_{\theta
^*_{{\bf x}_i}}(Y_i)} \right)^{\alpha} \left(\int_{A_{\theta
'}}\prod_{i=1}^{n}\frac{f_{\theta_{{\bf x}_i}}(Y_i)}{f_{\theta
'_{{\bf x}_i}}(Y_i)} d\nu(\theta) \right)^{\alpha}\right] \\
&& \mbox{(By Cauchy--Schwartz inequality)}\\
&&\leq\left(\E_0\left[\left( \frac{f_{\theta'_{{\bf
x}_i}}(Y_i)}{f_{\theta^*_{{\bf x}_i}}(Y_i)} \right)^{2\alpha} \right
]\right)^{\frac{1}{2}} \cdot\left(\E_0\left[\left(\int
_{A_{\theta'}}\prod_{i=1}^{n}\frac{f_{\theta_{{\bf
x}_i}}(Y_i)}{f_{\theta'_{{\bf x}_i}}(Y_i)} d\nu(\theta) \right
)^{2\alpha}\right]\right)^{\frac{1}{2}} \\
&&\mbox{(By Jensen's inequality)}\\
&&\leq\left(\E_0\left[\left( \frac{f_{\theta'_{{\bf
x}_i}}(Y_i)}{f_{\theta^*_{{\bf x}_i}}(Y_i)} \right)^{\alpha'} \right
]\right)^{\frac{1}{2}} \cdot\left(\int_{A_{\theta'}}\E_0\left
[\prod_{i=1}^{n}\frac{f_{\theta_{{\bf x}_i}}(Y_i)}{f_{\theta'_{{\bf
x}_i}}(Y_i)}\right] d\nu(\theta) \right)^{\frac{\alpha'}{2}} \\
&& < e^{-n\alpha'\frac{\delta}{2}} \cdot e^{n\alpha'\frac{\delta
}{4}} = e^{-n\alpha\frac{\delta}{2}}.\hspace*{200pt}\qedhere
\end{eqnarray*}
\end{proof}


\begin{acknowledgement}
We thank the Editor, Associate Editor and the Referees for their
insightful comments and helping us significantly improve the paper. The
work of Karthik Sriram was supported by a research grant provided by
Indian Institute of Management Ahmedabad, India.
\end{acknowledgement}

\end{document}